\title{On local weak limit and subgraph counts for sparse random graphs\footnote{Part of this work was supported by the Research Council of Lithuania (MIP-067/2013).} }
\newenvironment{proof}{\noindent{\bf Proof}}{\hspace*{\fill}$\Box$}
\newenvironment{proofof}[1]{%
\noindent {\bf Proof of #1}}%
{\hspace*{\fill}$\Box$}
\newtheorem{theorem}{Theorem}[section]
\newtheorem{lemma} {Lemma}[section]
\newtheorem{corollary} {Corollary}[section]
\newtheorem{prop} {Proposition}[section]
\newtheorem{remark} {Remark}[section]
\newtheorem{claim} {Claim}[section]
\newtheorem{example} {Example}[section]
\def\E{{\mathbb E}\,}
\let\eps=\epsilon
\def\enddiscard{}
\long\def\discard#1\enddiscard{}
\def\Po{\mbox{\rm Po}}
\newcommand{\pr}{\mathbb P}
\newcommand{\ca}{{\mathcal A}}
\newcommand{\cf}{{\mathcal F}}
\newcommand{\cR}{{\mathcal R}}
\newcommand{\cl}{{\mathcal L}}
\newcommand{\ct}{{\mathcal T}}
\newcommand{\cv}{{\mathcal V}}
\newcommand{\cx}{{\mathcal X}}
\newcommand{\ii}{{\mathbb I}}
\newcommand{\bp}{{\mathbf p}}
\newcommand{\Xn}[1]{{X^{(#1)}}}
\newcommand{\Vn}[1]{{V^{#1}}}
\newcommand{\toD}{{\xrightarrow{d}}}
\newcommand{\dn}{d_n}
\newcommand{\weaklyto}{{\,\toD\,}}
\newcommand{\weaklypto}{{\,\xrightarrow{p}\,}}
\DeclareMathOperator{\emb}{emb}
\DeclareMathOperator{\rr}{root}
\date{2021-09-17}
\begin{document}

\author{{Valentas Kurauskas}
\\
\\ Vilnius University}


\maketitle

\begin{abstract}
    We use an inequality of Sidorenko to 
    show a general relation between local and global subgraph counts and degree moments
    for locally weakly convergent sequences of sparse random graphs.
    This yields an optimal criterion to check when the asymptotic behaviour of
    graph statistics such as the clustering coefficient and assortativity
    is determined by the local weak limit.

    As an application we obtain new facts 
    for several common models of
    sparse random intersection graphs
    where the local weak limit, as we see here,
    is a simple random clique tree corresponding to a certain two-type Galton-Watson branching process.

    \smallskip

\emph{keywords}: local weak limit, subgraph count, clique tree, random intersection graph

\smallskip

2020 Mathematics Subject Classification: Primary 60C05; 05C80 Secondary 05C82

\end{abstract}

\bigskip

\small {
  \bigskip

  Author's address: Akademijos 4, LT-08663 Vilnius, Lithuania. Email: valentas@gmail.com.
  }

 \newpage

 \section{Introduction}
 \label{sec.intro0}

 A \emph{rooted graph} is a pair $(H, v)$
 where $H$ is a graph and $v \in V(H)$ is a distinguished vertex called the \emph{root}.
 We often use only the symbol $H$ to denote $(H,v)$; in this case we write $\rr(H)=v$.
 For a graph $G$ and its vertex $v$, let $B_r$ be the function that maps $(G,v)$ to the
 the rooted graph $(H, v)$ where $H$ is
 the subgraph induced on the vertices of $G$ with distance from $v$ at most $r$. 
 We simplify $B_r(G,v) = B_r((G,v))$ for $B_r$ and other functions on rooted graphs.

A graph is \emph{locally finite} if the degree of each of its vertices is finite. Denote by $\cong$
the isomorphism relation between connected rooted graphs
which preserves the root.
Let ($\mathcal{G}_*$, $d_{loc}$) be the space of rooted connected locally finite graphs with
equivalence relation $\cong$ and distance
\[
    d_{loc}(G_1, G_2)  = 2^{-\sup \{r: \, B_r(G_1) \cong B_r(G_2)\}}.
\] 
Consider a sequence of finite graphs $\{G_n, n=1,2, \dots\}$.
In this paper we assume $|V(G_n)| \ge 1$ for $n \ge 1$.
Let $v^*_n$ be a uniformly random vertex  from $V(G_n)$.
The component of $G_n$ containing $v^*_n$ together with root $v^*_n$
induces a Borel measure $\mu_n$ on  $(\mathcal{G}_*, d_{loc})$ for each $n$. 
Let $\mu^*$ be another Borel measure on $(\mathcal{G}_*, d_{loc})$,
and denote by $G^*$ a random element\footnote{Without loss of generality we assume that all random objects
we define in the paper are random elements in a single probability space $(\Omega, \cf, \pr)$
with the specified laws; the integration $\E$ is over $\Omega$.} with law $\mu^*$.
Following Benjamini and Schramm \cite{benjaminischramm2001}, Aldous, Lyons and Steele and other authors \cite{aldoussteele2004, lovasz-hom-book, lyons}, 
we say that $G^*$ is the \emph{local weak limit} of $\{(G_n,v^*_n)\}$ and write $(G_n,v^*_n) \weaklyto G^*$ 
if and only if the measures $\mu_n$ converge weakly
to $\mu^*$: for each continuous bounded function $f: (\mathcal{G}_*, d_{loc}) \to \mathbb{R}$
\begin{equation}\label{eq.measureconv}
   \E f(G_n, v^*_n) \to \E f(G^*).
\end{equation}
Here and below all limits are as $n\to \infty$, unless stated otherwise. 
Since $(\mathcal{G}_*, d_{loc})$ is separable and complete \cite{aldouslyons}, a standard argument (e.g., Theorem~2.3 of \cite{billingsley_weak}) shows that $(G_n, v^*_n) \weaklyto G^*$ if and only if for each non-negative integer $r$ and 
each rooted connected graph $H$ 
\[
    \pr(B_r(G_n, v^*_n) \cong H) \to \pr(B_r(G^*) \cong H).
\]
We focus on models of random graphs with bounded average degree. 
Among others,  
the inhomogeneous random graph
model of Bollob\'{a}s, Janson and Riordan \cite{bollobasjansonriordan2011} and
the preferential attachment model, see Berger, Borgs, Chayes and Saberi \cite{bergerborgschayessaberi2014}
have been shown to have a weak limit (in an explicit form).
Recently such a limit was also shown to exist for random planar graphs \cite{stufler2019}.
The local weak limit, if it exists, yields a lot of information about the asymptotics of various graph parameters, see, e.g., 
\cite{benjaminilyonsschramm2013, bollobasjansonriordan2011, bollobasriordan2011, bordenavelelarge2009, lovasz-hom-book, salez2011}. 

The present contribution consists of a general result, Theorem~\ref{thm.gencounts}, relating the asymptotics of subgraph counts 
with the local weak limit, and its application in the area of random intersection graphs.

The structure of the paper is as follows. In Section~\ref{sec.gencounts} we present and prove Theorem~\ref{thm.gencounts}. In a separate result, Theorem~\ref{thm.riglocal} of Section~\ref{sec.intro}, we determine the (very simple) local weak limit of several popular random intersection graph models.
Combining this with Theorem~\ref{thm.gencounts} and using the fact that many important graph parameters can be expressed in terms of small subgraph counts, we obtain a number of previous and some new results for this type of models, 
see Section~\ref{sec.applications}. The same method works for any sparse random graph model where we have weak local convergence (see, e.g., Section~\ref{subsec.app.gen}). 

The first manuscript of this paper was completed and posted to arXiv in 2015 \cite{kurauskas2015}. 
Since then there has appeared some work in a similar general direction, including, for example, \cite{vadon2019}, unaware of the very general Theorem~\ref{thm.gencounts}. A recent book in preparation \cite{vanderhofstad2020} also devotes a chapter to weak limits as a general technique to study real world networks.
The present version of the paper fixes several minor errors and omissions and has an updated literature list. 

\section{Local weak limit and subgraph counts}
\label{sec.gencounts}

In Section 7 of \cite{bollobasriordan2011} Bollob\'as, Janson and Riordan remark that the local weak limit
does not always determine the global subgraph count asymptotics, see also Example~\ref{ex.1} below. 
They propose an extra condition of ``exponentially bounded tree counts''. 
Our main result is
that a simple condition on the degree moment is sufficient and, in general, necessary.


A homomorphism from a graph $H$ to a graph $G$ is a mapping from $V(H)$ to $V(G)$ that
maps adjacent vertices in $H$ to adjacent vertices in $G$.
Denote by $\emb(H, G)$ the number of embeddings (injective homomorphisms)
from $H$ to $G$. 
For a rooted graph $H'$ 
let $\emb'(H', G, v)$ denote the number of
embeddings from $H'$ to $G$ that map $\rr(H')$ to $v$.   Let $\cR(H)$ denote the set of all $|V(H)|$ possible rooted graphs obtained from a graph $H$. Finally let $d_G(v)$ denote the degree of vertex $v$ in $G$.

\begin{theorem}\label{thm.gencounts}
    Let $h\ge2$ be an integer,
    let $\{G_n, n = 1, 2\dots\}$ be a sequence graphs,
    such that $n_1 = n_1(n) = |V(G_n)| \to \infty$ and $n_1 \ge 1$, 
    let $v^*_n$ be chosen uniformly at random from $V(G_n)$
    and suppose $(G_n,v^*_n) \weaklyto G^*$. Write 
    $\dn = d_{G_n}(v^*_n)$,
    $d^* = d_{G^*}(r^*)$,
    where $r^* = \rr(G^*)$, 
    and assume $\E (d^*)^{h-1} < \infty$. 
    Then the following statements are equivalent:
    \begin{enumerate}[(i)]
        \item $\E \dn^{h-1} \to \E (d^*)^{h-1}$;  \label{eq.degmoment}
        \item $\dn^{h-1}$ is uniformly integrable; \label{eq.uint}
        \item for any connected graph $H$ on $h$ vertices and any $H' \in \cR(H)$  \label{eq.embconv}
            \[
                n_1^{-1} \emb(H, G_n) \to \E \emb'(H', G^*, r^*).
            \]
    \end{enumerate}
\end{theorem}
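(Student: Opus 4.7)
The plan is to reduce to distributional convergence via the local-weak-limit machinery, then handle (i)$\Leftrightarrow$(ii) by Vitali, (iii)$\Rightarrow$(ii) by specialising (iii) to the star, and (ii)$\Rightarrow$(iii) by upgrading distributional convergence of embedding counts to $L^1$-convergence via an inequality of Sidorenko. As a preliminary I would observe that $d_G(v)$ depends only on $B_1(G,v)$, so local weak convergence gives $\dn\to d^*$ in distribution; more generally, for any rooted connected graph $H'$ on $h$ vertices the map $(G,v)\mapsto \emb'(H',G,v)$ depends only on $B_{h-1}(G,v)$, hence is continuous on $(\mathcal{G}_*, d_{loc})$, so $\emb'(H',G_n,v^*)\to \emb'(H',G^*,r^*)$ in distribution. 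The equivalence (i)$\Leftrightarrow$(ii) is then immediate from Vitali's convergence theorem applied to $\dn^{h-1}$, using $\E(d^*)^{h-1}<\infty$.

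For (iii)$\Rightarrow$(ii) I would specialise (iii) to $H = K_{1,h-1}$ rooted at its centre $u$, because $\emb'((K_{1,h-1},u),G,v) = d_G(v)(d_G(v)-1)\cdots(d_G(v)-h+2) = d_G(v)^{\underline{h-1}}$. So (iii) collapses to $\E \dn^{\underline{h-1}} \to \E (d^*)^{\underline{h-1}}$, whence Vitali gives uniform integrability of $\dn^{\underline{h-1}}$. This transfers to $\dn^{h-1}$ since the two polynomials in $d$ differ by a term of degree at most $h-2$ and are comparable for large $d$.

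The main step is (ii)$\Rightarrow$(iii). Fix a connected $H$ on $h$ vertices and a rooting $H' = (H,u)$; the identity $\E\emb'(H',G_n,v^*) = n_1^{-1}\emb(H,G_n)$ together with the distributional convergence above reduces the task to showing that the sequence $\emb'(H',G_n,v^*)$ is uniformly integrable in $n$. Here I would invoke an inequality of Sidorenko asserting, in essence, that $\emb(H,G) \le C_H \sum_{v\in V(G)} d_G(v)^{h-1}$, in a flexible version that allows restricting the sum to a set of vertices picked out by a tail event. Applied to the indicator $\mathbf{1}\{\emb'(H',G_n,v^*)>K\}$, it should yield an estimate of the form
\[
\sup_n \E\,\emb'(H',G_n,v^*)\,\mathbf{1}\{\emb'(H',G_n,v^*)>K\} \le C_H \sup_n \E\,\dn^{h-1}\,\mathbf{1}\{\dn > K'(K)\},
\]
with $K'(K)\to\infty$ as $K\to\infty$; by (ii) the right-hand side vanishes, giving the required uniform integrability and hence (iii).

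The hard part will be this last step. The naive pointwise bound $\emb'(H',G,v) \le C_H d_G(v)^{h-1}$ is false in general, since the count at a low-degree root $v$ can be inflated by high-degree vertices elsewhere in $B_{h-1}(v)$ (for example a star $G_n$ with $H = P_3$ rooted at an endpoint). Extracting an effective pointwise-tail estimate from Sidorenko's averaged inequality, so that high-degree vertices of $G_n$ are charged correctly through a redistribution across $V(H)$ rather than concentrated at the root, is the technical heart of the argument.
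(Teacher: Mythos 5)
Your structure is right and two-thirds of the proof is fine, but the core of (ii)$\Rightarrow$(iii) is left as a gap, which you yourself flag. Let me go through it.

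The equivalence (i)$\Leftrightarrow$(ii) is exactly as in the paper (via distributional convergence of $\dn$ from the local weak limit plus the standard uniform-integrability lemma). Your (iii)$\Rightarrow$(ii) via $H = K_{1,h-1}$, then passing from the falling-factorial $(\dn)_{h-1}$ to $\dn^{h-1}$ by comparability for large $\dn$, is correct and is in fact a touch more economical than the paper, which instead derives uniform integrability of every $(\dn)_j$ for $j\le h-1$ and then reassembles $\dn^{h-1}$ via Stirling numbers. Both routes work; yours is shorter. (One small omission throughout: you should record that Fatou's lemma together with Sidorenko's bound $\emb(H,G_n)\le \sum_v d_{G_n}(v)^{h-1}$ gives $\E \emb'(H',G^*,r^*)<\infty$; this integrability is what makes the Vitali step legitimate in both (i)$\Leftrightarrow$(ii) and (ii)$\Rightarrow$(iii).)

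The problem is (ii)$\Rightarrow$(iii). You correctly reduce it to uniform integrability of $X_n := \emb'(H',G_n,v^*)$, correctly observe that the naive pointwise bound $X_n \le C_H \dn^{h-1}$ is false, and correctly diagnose that high-degree vertices away from the root need to be ``charged'' somewhere else --- and then you stop, conceding that this is the technical heart of the argument. The ``flexible version'' of Sidorenko you wish for, giving $\sup_n \E X_n \ii_{X_n > K} \le C_H \sup_n \E \dn^{h-1}\ii_{\dn > K'(K)}$, is not a known inequality and would need to be proved; as stated it is not obviously true, since the event $\{X_n > K\}$ does not localise to a degree condition at $v^*$. So the proposal as written does not prove (ii)$\Rightarrow$(iii).

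What the paper actually does differs in a way worth noting. Rather than proving uniform integrability of $X_n$ directly, it works at the level of the global count $\emb(H,G_n)$ and decomposes the set of embeddings into ``good'' and ``bad'' ones. Fix $\eps>0$. Using finiteness of $\E(d^*)^{h-1}$ and of $\E \emb'(H',G^*,r^*)$ one picks thresholds $t < s$ and defines $R_1 = \{v : d_{G_n}(v) > t\}$ (high-degree vertices) and $R_2 = \{v : |B_{r+1}(G_n,v)| > s\}$ (large-ball vertices, $r$ the diameter of $H$). The crucial move is to truncate not by the degree of the root but by the ball size $b_{r+1}(G_n,v^*)$: local weak convergence then controls $\E X_n \ii_{b_{r+1}\le s}$, and the remaining error is the number of embeddings whose image meets $R_1\cup R_2$. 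These bad embeddings are split into those touching both $R_1$ and its complement (controlled by the tail of $X_n$ conditioned on $b_{r+1}\in(t,s]$, which local weak convergence makes small) and those whose image lies inside $R_1\cup Q$, where $Q$ consists of the vertices near $R_2$ with degree $\le t$; for those one applies Sidorenko again, once to the induced subgraph $G_n[R_1]$ (controlled by condition (i)) and once to $Q$ (which is small since $|Q| \le |R_2| t^r$). This is precisely the ``redistribution across $V(H)$'' you anticipated, but implemented by a subgraph-restricted Sidorenko inequality on a carefully chosen vertex partition, not by a pointwise-tail bound at the root. Without this (or some substitute), your proposal does not close.
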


The above theorem provides a sufficient condition for
the continuous but not necessarily bounded
function $f_H :(\mathcal{G}_*, d_{loc}) \to \mathbb{R}$ defined by $f_H(G, v)= \emb'(H, G, v)$
to satisfy (\ref{eq.measureconv}).
It is easy to construct weakly convergent sequences for 
which (\ref{eq.degmoment})-(\ref{eq.embconv}) fail to hold: 
\begin{example}\label{ex.1}
    Let $(G_n,v^*_n)$ be as in Theorem~\ref{thm.gencounts} and assume $|V(G_n)|= n$.
    Let $(G'_n, v^*_n)$ be obtained by merging edges of a clique on a subset $S_n$ of $G_n$. 
If $|S_n| = \Omega(n^{1/h})$ and $|S_n| = o(n)$ 
    then $(G_n',v^*_n) \weaklyto G^*$,
but (\ref{eq.degmoment})--(\ref{eq.embconv}) do not hold for $G_n'$. 
\end{example}
The proof of this theorem follows from the next basic but not widely known result of
Sidorenko \cite{sidorenko1994}.
\begin{theorem}{(Sidorenko, 1994)}\label{lem.sidorenko}
    Let $H$ be a connected graph on $h$ vertices. Then for any graph $G$
    \[
        \hom(H, G) \le \hom(K_{1,h-1}, G) = \sum_{v \in V(G)} d_G(v)^{h-1}.
    \]
\end{theorem}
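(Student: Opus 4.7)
My plan has two parts. First, reduce to the case where $H$ is a tree. For any connected $H$ on $h$ vertices, fix a spanning tree $T$ of $H$; since any homomorphism $H \to G$ must respect every edge of $H$, in particular every edge of $T$, it is also a homomorphism $T \to G$. Therefore $\hom(H, G) \le \hom(T, G)$, and it suffices to prove
\[
    \hom(T, G) \le \sum_{v \in V(G)} d_G(v)^{h-1}
\]
for all trees $T$ on $h$ vertices.

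Next, prove the tree case by induction on $h$. The base cases $h = 1$ (both sides equal $|V(G)|$) and $h = 2$ (both sides equal $\sum_v d_G(v)$) are immediate. For the inductive step with $h \ge 3$, pick a leaf $v$ of $T$ with neighbor $u$, set $T' = T \setminus \{v\}$, and decompose
\[
    \hom(T, G) = \sum_{w \in V(G)} N(w)\, d_G(w),
\]
where $N(w)$ counts homomorphisms $T' \to G$ sending $u \mapsto w$; in particular $\sum_w N(w) = \hom(T', G)$. I would then apply H\"older's inequality with conjugate exponents $p = (h-1)/(h-2)$ and $q = h - 1$ to obtain
\[
    \hom(T, G) \le \Bigl(\sum_w N(w)^{(h-1)/(h-2)}\Bigr)^{(h-2)/(h-1)} \Bigl(\sum_w d_G(w)^{h-1}\Bigr)^{1/(h-1)},
\]
so the theorem reduces to the strengthened claim $\sum_w N_{T', u}(w)^{(h-1)/(h-2)} \le \sum_w d_G(w)^{h-1}$.

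The main obstacle is verifying this strengthened claim, since the plain inductive bound $\hom(T', G) \le \sum_w d_G(w)^{h-2}$ controls only the unweighted sum. One route is a finer induction that peels another leaf off $T'$ and uses the recursion $N_{T',u}(w) = \sum_{w' \sim w} N(w')$ together with a Cauchy--Schwarz step on each neighborhood, eventually producing the correct power mean of $d_G$. An alternative, possibly cleaner approach is a Kelmans-type shifting argument: exhibit a local move on $T$ that shifts pendant vertices toward a fixed center, show that it never decreases $\hom(T, G)$, and iterate until $T$ becomes the star $K_{1, h-1}$, at which point the inequality is an equality by the very definition of $\hom(K_{1, h-1}, G) = \sum_v d_G(v)^{h-1}$. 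Either route reduces the inequality to the extremal case of the star; I expect the shifting argument to be the more elegant of the two, but the induction plus H\"older to be easier to write down explicitly.
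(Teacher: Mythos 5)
The paper does not prove this result; it cites Sidorenko (1994) and uses it as a black box. So there is no in-paper argument to compare against, and your proposal must stand on its own.

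Your preparatory steps are correct: passing from $H$ to a spanning tree $T$ via $\hom(H,G)\le\hom(T,G)$, the decomposition $\hom(T,G)=\sum_{w\in V(G)} N_{T',u}(w)\,d_G(w)$ obtained by peeling a leaf $v$ with neighbour $u$, and the H\"older step with conjugate exponents $(h-1)/(h-2)$ and $h-1$. However, this H\"older step does not prove anything; it merely trades the target inequality for the equivalent claim
\[
\sum_{w} N_{T',u}(w)^{(h-1)/(h-2)} \le \sum_{w} d_G(w)^{h-1},
\]
a statement about a rooted tree on $h-1$ vertices that carries essentially the full content of the theorem (note it is tight when $T'=K_{1,h-2}$ rooted at the centre, where $N_{T',u}(w)=d_G(w)^{h-2}$). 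You explicitly flag this as ``the main obstacle'' and then offer only two unexecuted sketches. Route~A relies on the recursion $N_{T',u}(w)=\sum_{w'\sim w}N(w')$, but that identity holds only when $u$ is itself a leaf of $T'$, i.e.\ when $\deg_T(u)=2$; if $\deg_T(u)\ge 3$ the quantity $N_{T',u}(w)$ does not split as a neighbourhood sum in $G$, and you do not exhibit the Cauchy--Schwarz step that would make the exponents close under induction (a direct attempt produces an exponent like $k/(k-2)$ where the inductive hypothesis delivers $(k-1)/(k-2)$). Route~B, a shift of pendant vertices toward a centre, is plausible and is indeed the spirit of Sidorenko's own argument and of later work such as Csikv\'ari's generalized tree shift, but the key lemma --- that the shift never decreases $\hom(T,G)$ for every target graph $G$ --- is asserted as an expectation, not proved. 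As written, the proposal correctly sets up the reduction but leaves the central inequality open; to complete it you would need to carry one of your two sketches through in full, or substitute a known technique such as an entropy (Shearer-type) argument.
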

Here $\hom(H, G)$ is the number of homomorphisms from $H$ to $G$ and $K_{1, s}$ is the complete bipartite graph
with part sizes $1$ and $s$.
A special case where $H$ is a path has been rediscovered in \cite{fiolgarriga2012}, see also \cite{csikvarilin2013}.


Below, in Section~\ref{sec.applications}, we restate Theorem~\ref{thm.gencounts} in a random setting and demonstrate how it implies general results on network statistics expressible through subgraph counts such as the clustering and assortativity coefficients.


Recall that a sequence of random variables $\{X_n, n=1,2,\dots\}$ is uniformly integrable
if $\sup_{a\to \infty} \sup_n \E |X_n| \ii_{|X_n| > a} = 0$, equivalently,
if $\E |X_n| \ii_{|X_n| > \omega_n} \to 0$ for any $\omega_n \to \infty$.
A basic fact, see e.g. \cite{billingsley_1999} p. 31--32, is
\begin{lemma}\label{lem.uint}
    Suppose random variables $X^*, X_n, n=1,2,\dots$ are non-negative, integrable and $X_n$ converges to $X^*$ in distribution as $n \to \infty$.
    Then $\{X_n\}$ is uniformly integrable if and only if $\E X_n \to \E X^*$.
\end{lemma}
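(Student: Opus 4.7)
The plan is to prove both implications by comparing $X_n$ and $X^*$ to their bounded continuous truncations $f_a(x) := x \wedge a$, which satisfy $\E f_a(X_n) \to \E f_a(X^*)$ for every $a > 0$ by the portmanteau characterization of convergence in distribution applied to bounded continuous test functions. The workhorse identity throughout will be $\E Y = \E f_a(Y) + \E (Y - a)^+$ for any non-negative integrable random variable $Y$.

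For the forward implication, assume $\{X_n\}$ is uniformly integrable. Given $\epsilon > 0$, use the bound $\E (X_n - a)^+ \le \E X_n \ii_{X_n > a}$ together with uniform integrability to pick $a$ so large that $\E (X_n - a)^+ < \epsilon$ uniformly in $n$; increase $a$ further so that $\E (X^* - a)^+ < \epsilon$ as well, which is possible by monotone convergence and $\E X^* < \infty$. Then the triangle inequality
\[
|\E X_n - \E X^*| \le |\E f_a(X_n) - \E f_a(X^*)| + \E(X_n - a)^+ + \E(X^* - a)^+
\]
together with $\E f_a(X_n) \to \E f_a(X^*)$ gives $|\E X_n - \E X^*| < 3\epsilon$ for all sufficiently large $n$.

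For the converse, combining $\E X_n \to \E X^*$ and $\E f_a(X_n) \to \E f_a(X^*)$ with the truncation identity yields $\E (X_n - a)^+ \to \E (X^* - a)^+$ for each fixed $a \ge 0$. Given $\epsilon > 0$, first choose $a$ large enough that $\E(X^* - a/2)^+ < \epsilon/4$, which is possible by monotone convergence; then $\E(X_n - a/2)^+ < \epsilon/2$ for all $n \ge N$, for some threshold $N$. Since $t\,\ii_{t > a} \le 2(t - a/2)^+$ for every $t \ge 0$ (indeed $t < 2(t-a/2)$ whenever $t > a$), we obtain $\E X_n \ii_{X_n > a} \le 2\E(X_n - a/2)^+ < \epsilon$ for $n \ge N$. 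The remaining finitely many $X_1, \dots, X_{N-1}$ are each integrable, so enlarging $a$ absorbs their tails as well, yielding uniform integrability of the whole sequence.

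The statement is classical and no step is conceptually hard; the only mild subtlety is the passage between the tails $\E(Y - a)^+$, which arise naturally from the truncation identity, and the tails $\E Y \ii_{Y > a}$ that appear in the definition of uniform integrability. The comparison $t\,\ii_{t > a} \le 2(t - a/2)^+$ bridges this gap cleanly and is the only non-obvious ingredient in what is otherwise a routine truncation argument.
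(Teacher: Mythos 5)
The paper does not prove this lemma; it is stated as a ``basic fact'' with a citation to Billingsley's \emph{Convergence of Probability Measures}, so there is no in-paper argument to compare against. Your self-contained proof is correct. Both directions are sound: the forward direction correctly uses $\E(X_n-a)^+ \le \E X_n \ii_{X_n > a}$ together with uniform integrability to kill the tails uniformly in $n$, pushes $a$ higher to also control $\E(X^*-a)^+$, and then lets the portmanteau convergence $\E(X_n\wedge a)\to\E(X^*\wedge a)$ finish the job; the converse correctly deduces $\E(X_n-a)^+\to\E(X^*-a)^+$ by subtracting the two convergences, and the bridging inequality $t\,\ii_{t>a}\le 2(t-a/2)^+$ (valid since $t>a$ forces $t\le 2t-a$) cleanly converts the centered tail into the indicator tail needed for uniform integrability, with the finitely many initial indices handled by the usual enlargement of $a$. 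This is the standard truncation proof of Billingsley's Theorem 5.4. Billingsley's own version replaces $x\wedge a$ with a continuous ramp function that equals $x$ on $[0,a]$, drops to $0$ on $[a,a+1]$, and vanishes thereafter, so that the relevant indicator is directly sandwiched between two bounded continuous test functions; your algebraic comparison $t\,\ii_{t>a}\le 2(t-a/2)^+$ achieves the same effect without introducing a second truncation function and is, if anything, a touch slicker. Either route is fine, and nothing in the proposal needs correction.
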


    \medskip

\begin{proofof}{Theorem~\ref{thm.gencounts}} In the proof denote $G=G_n$ and $v^*=v^*_n$.

\emph{(\ref{eq.degmoment})$\Leftrightarrow$(\ref{eq.uint})}.
    $\dn$ converges in distribution
    to $d^*$ by (\ref{eq.measureconv}). 
Thus $\dn^{h-1}$ converges in distribution to $(d^*)^{h-1}$ and 
the proof follows by Lemma~\ref{lem.uint}. 
    
    \emph{(\ref{eq.degmoment})$\Rightarrow$(\ref{eq.embconv})}.
    Suppose (\ref{eq.degmoment}) holds. Fix any connected graph $H$ with $|V(H)| = h$.
    Let $r$ be the diameter of $H$. Write $b_j(G,v) = |B_j(G, v)|$
    and $b_j^* = |B_j(G^*)|$. Note that $\pr(b_j^* = \infty) = 0$ for $j=0,1,\dots$ since $G^*$ is locally finite. 
    For a rooted graph $H'$ denote 
    \[
        X(H') = \emb'(H', G, v^*) \quad \mbox{and} \quad X^*(H') = \emb'(H', G^*, r^*).
    \]
    Using Sidorenko's theorem,
    Theorem~\ref{lem.sidorenko}, for any $H' \in \cR(H)$
    \begin{align}
        &\E X(H') = n_1^{-1} \emb(H,G) \le
         n_1^{-1} \hom(H,G) \nonumber
       \\& \le  n_1^{-1} \hom(K_{1,h-1},G)
       = \E \dn^{h-1} \to  \E (d^*)^{h-1} < \infty. \label{eq.subdeg}
   \end{align}
   Next, we have $X(H') \to X^*(H)$ in distribution for each $H' \in \cR(H)$ (apply
   (\ref{eq.measureconv}) to the continuous and bounded function $f(G,v) = \ii_{emb'(G, H', v) = k}$).
   Therefore by Fatou's lemma and (\ref{eq.subdeg}),
   \begin{equation} \label{eq.fatou}
       \E X^*(H') \le \lim \inf \E X(H') < \infty.
    \end{equation}
    Let $\eps \in (0,1)$. Since $\E (d^*)^{h-1}$ and $\E X^*(H')$ 
    are finite, we can find a $t > 0$ such that
    \begin{align*}
        &\E (d^*)^{h-1} \ii_{d^* > t} \le \E (d^*)^{h-1} \ii_{b^*_{r+1} > t} < \eps \quad \mbox{ and }
       \\ &\E X^*(H') \ii_{b_{r+1}^* > t} 
    < \eps \mbox{ for each } H' \in \cR(H).
    \end{align*}
    Pick $s \ge t $ large enough that 
        $\pr(b_{r+1}^* > s) \le 0.5 \epsilon t^{-(h + r - 1)}$.
    By Lemma~\ref{lem.uint} and  (\ref{eq.measureconv})  for each $H' \in \cR(H)$
    \begin{align} 
        &\E \dn^{h-1} \ii_{\dn \le t} \to \E (d^*)^{h-1} \ii_{d^* \le t}; \label{eq.dk}
     \\   &\E X(H') \ii_{b_{r+1}(G, v^*) \le s} \to \E X^*(H') \ii_{b_{r+1}^* \le s} \ge \E X^*(H') -\eps; \label{eq.copies_lower}
    \\ &\E X(H') \ii_{b_{r+1}(G, v^*) \in (t,s]} \to \E X^*(H') \ii_{b_{r+1}^* \in (t,s]} \le \eps; \label{eq.copies_tail}
    \\   &\pr(b_{r+1}(G, v^*) > s) \to \pr(b_{r+1}^* > s)  \le 0.5 \epsilon t^{-(h+r-1)}.  \label{eq.R2bound}
    \end{align} 
     Define subsets of $V(G)$:
    \[
        R_1 := \{v: d_G(v) > t\}; \quad R_2 := \{v: b_{r+1}(G, v) > s\}. 
    \]
    We call an embedding $\sigma$ of $H$ into $G$ \emph{bad} if its image shares a vertex with $R_1 \cup R_2$. Denote the set of all bad embeddings by $\cx_{bad}$. Note that for $H' \in \cR(H)$ 
    \begin{equation} \label{eq.cxbad}
        0 \le \emb(H, G) - n_1 \E X(H') \ii_{b_{r+1}(G, v^*) \le s} \le |\cx_{bad}|.
   \end{equation}
    Let $\cx_1$ bet the set of all embeddings $\sigma$ whose image intersects both $R_1$
    and $V \setminus (R_1 \cup R_2)$. Let $\cx_2 = \cx_{bad} \setminus \cx_1$.
    By Theorem~\ref{lem.sidorenko}, the number of bad embeddings which have the image entirely contained in $R_1$ is  
    \begin{align}
        \emb(H, G[R_1]) &\le \hom(H, G[R_1]) \le \sum_{v \in R_1} d_G(v)^{h-1} \nonumber
         =
         n_1 (\E \dn^{h-1} - \E \dn^{h-1}\ii_{\dn \le t}) 
      \\ &\le n_1 \E \dn ^{h-1} -  n_1\E (d^*)^{h-1} + \epsilon n_1 + o(n_1)  \le \epsilon n_1 + o(n_1).  \label{eq.dk3}
    \end{align}
    Here the last two inequalities follow by  (\ref{eq.degmoment}) and (\ref{eq.dk}). 
    Let $v \in V \setminus (R_1 \cup R_2)$ be a vertex
    in the image of an embedding in $\cx_1$.
By the definition of $R_1$ and $R_2$, $b_{r+1}(G, v) \in (t, s]$. So using (\ref{eq.copies_tail})
    \[
    |\cx_1| \le  n_1 \sum_{H' \in \cR(H)} \E X(H') \ii_{b_{r+1}(G,v^*) \in (t, s]} \le h \eps n_1 + o(n_1).
    \]  
    Now consider a subgraph $H_\sigma$ of $G$, $H_\sigma \cong H$ corresponding to an embedding $\sigma \in \cx_2$.
    $H_\sigma$ cannot have an edge in 
    $E_1 = \{xy \in G: x \in R_1, y \in V(G) \setminus (R_1 \cup R_2)\}$,
    otherwise $\sigma$ would be an element of $\cx_1 = \cx_{bad}\setminus \cx_2$.
    So $V(H_\sigma)$ is contained in $R_1 \cup Q$, where
    \[
        Q = \bigcup_{v \in R_2} V(B_r(G - E_1, v)) \setminus R_1.
    \]
    Note that since each vertex in $Q$ has degree at most $t$, $|Q| \le 2 |R_2| t^r$.
    By Theorem~\ref{lem.sidorenko}
    \begin{equation}\label{eq.cx2}
        |\cx_2| \le \sum_{v \in R_1} d_G(v)^{h-1} + \sum_{v \in Q} d_G(v)^{h-1}.
    \end{equation}
    For the second term we have by (\ref{eq.R2bound}) 
    \begin{equation} \label{eq.cx22}
      \sum_{v \in Q} d_G(v)^{h-1} \le |Q| t^{h-1} \le 2 |R_2| t^r t^{h-1} \le \epsilon n_1 + o(n_1).
   \end{equation}
    Combining (\ref{eq.dk3}), (\ref{eq.cx2}) and (\ref{eq.cx22}) we obtain
    $|\cx_2| \le 2 \epsilon n_1 + o(n_1)$.
    We have proved
    \[
      |\cx_{bad}| = |\cx_1| + |\cx_2| \le (h + 2) \epsilon n_1 + o(n_1).
    \]
    Since the proof holds for arbitrarily small $\epsilon$, 
    we see that $n_1^{-1} |\cx_{bad}| \to 0$. Thus (\ref{eq.embconv}) follows using 
    (\ref{eq.copies_lower})
    and (\ref{eq.cxbad}).


    \emph{(\ref{eq.embconv})$\Rightarrow$(\ref{eq.degmoment}).}
    Write $(x)_p = x (x-1) \dots (x-p+1)$.
    (\ref{eq.embconv}) applied to $H = K_{1, h-1}$ yields
    $\E (\dn)_{h-1} \to \E (d^*)_{h-1}$, while $(G,v^*) \weaklyto G^*$
    shows that $(\dn)_{h-1} \to (d^*)_{h-1}$ in distribution.
    Thus $(\dn)_{h-1}$ is uniformly integrable by Lemma~\ref{lem.uint}. 
    This implies that for
    any $j = 1, 2, \dots, h-1$ 
    $(\dn)_j \le (\dn)_{h-1}$ is uniformly integrable, so by Lemma~\ref{lem.uint} again $\E (\dn)_j \to \E (d^*)_j$. 
    Using $S(h-1,j)$ to denote Stirling numbers of the second kind,
    \begin{align*}
        & \E (\dn)^{h-1} = \sum_{j=1}^{h-1} S(h-1,j) \E (\dn)_j
        \to \sum_{j=1}^{h-1} S(h-1,j) \E (d^*)_j = \E (d^*)^{h-1}. 
    \end{align*}

  \end{proofof}


  The next fact is simple and known (cf. Lemma~9.3 of \cite{bollobasjansonriordan2011}),
  but we include a proof for completeness.
  \begin{lemma}\label{lem.insensitive} Suppose $(G_n,v^*_n) \weaklyto G^*$ and the degree $d_n$ of
      a uniformly random vertex $v_n^*$ from $V(G_n)$ is uniformly integrable. Write $n_1 = n_1(n) = |V(G_n)|$.
      Let $G_n'$ be obtained from $G_n$ by adding or removing edges incident to
      a set $S_n \subseteq V(G_n)$ of size $o(n_1)$. Then $(G_n',v^*_n) \weaklyto G^*$.
  \end{lemma}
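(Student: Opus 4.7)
The plan is to apply the ball-level criterion for local weak convergence. Since $G_n \weaklyto G^*$ gives $\pr(B_r(G_n, v^*) \cong H) \to \pr(B_r(G^*) \cong H)$ for each fixed $r$ and each rooted connected $H$, it suffices to show that the set $A_r := \{v \in V(G_n) : B_r(G_n, v) \not\cong B_r(G_n', v)\}$ satisfies $|A_r|/n_1 \to 0$. That alone forces $\pr(B_r(G_n', v^*) \cong H) - \pr(B_r(G_n, v^*) \cong H) \to 0$ for every $r$ and $H$, which is the $B_r$-form of $G_n' \weaklyto G^*$.

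Let $T_n$ consist of $S_n$ together with the endpoints of every edge actually modified when passing from $G_n$ to $G_n'$; on the natural reading of ``adjacent to $S_n$'', every modified edge has both endpoints in $S_n$, so $T_n = S_n$ and $|T_n| = o(n_1)$. Let $M_r$ be the set of vertices at distance at most $r$ from $T_n$ in $G_n$. I claim $A_r \subseteq M_r$. If $v \notin M_r$, every path of length $\le r$ from $v$ in $G_n$ avoids $T_n$, so uses no modified edge, and is also a path in $G_n'$. Conversely, in any path $v = u_0, u_1, \dots, u_k$ in $G_n'$ with $k \le r$, let $i$ be the smallest index with $u_i \in T_n$ (if any). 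For $j < i$ we have $u_j \notin T_n$, so the edge $u_j u_{j+1}$ cannot have been modified; hence the prefix $u_0, \dots, u_i$ is a path in $G_n$, giving $u_i \in B_r(G_n, v) \cap T_n$ and contradicting $v \notin M_r$. Therefore no $r$-path from $v$ in $G_n'$ uses a modified edge, and $B_r(G_n, v) = B_r(G_n', v)$ as rooted graphs.

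The remaining step is $|M_r| = o(n_1)$, which I would show by induction on $r$. The base case is $|M_0| = |T_n| = o(n_1)$. For the step,
\[
|M_{r+1}| \le |M_r| + \sum_{v \in M_r} d_{G_n}(v).
\]
Uniform integrability of $d_n$ supplies, for each $\epsilon > 0$, a threshold $K$ with $n_1^{-1} \sum_{v : d_{G_n}(v) > K} d_{G_n}(v) \le \epsilon$ for all $n$. Splitting the inner sum at $K$,
\[
\sum_{v \in M_r} d_{G_n}(v) \le K |M_r| + \epsilon n_1 = K \cdot o(n_1) + \epsilon n_1,
\]
so letting $\epsilon \to 0$ gives $\sum_{v \in M_r} d_{G_n}(v) = o(n_1)$ and hence $|M_{r+1}| = o(n_1)$. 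This inductive step is the one nontrivial input in the proof: it is precisely where uniform integrability is needed, to rule out the scenario in which a small number of very-high-degree vertices sitting inside $M_r$ would expand the next neighborhood to linear size; without it, the conclusion can genuinely fail.
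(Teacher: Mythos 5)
Your proof is correct and takes essentially the same route as the paper's: both iterate a uniform-integrability bound to show the $r$-neighborhood of $S_n$ in $G_n$ has size $o(n_1)$, and then observe that the $r$-ball at a vertex outside this neighborhood is unchanged when passing to $G_n'$; your inductive degree-sum split is precisely the content of the paper's Claim~\ref{claim.neighbourhood}, proved there by contradiction rather than directly. Your explicit verification that $A_r \subseteq M_r$ via the ``first modified edge on the path'' argument is a bit more careful than the paper, which simply asserts $\pr(B_r(G_n,v^*)\cong B_r(G_n',v^*))\ge\pr(v^*\notin N_n^{(r)})$, and you correctly flag that the reading ``modified edges lie inside $S_n$'' (consistent with Example~\ref{ex.1}, where a clique on $S_n$ is merged) is what makes that containment and the bound $|T_n|=o(n_1)$ go through; under the looser reading where only one endpoint need lie in $S_n$, the statement would actually be false.
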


  \begin{proofof}{Example~\ref{ex.1}}
      It is straightforward that the uniform integrability condition (\ref{eq.uint}) fails for $G_n'$,
      so the other two conditions also fail by Theorem~\ref{thm.gencounts}. The
      fact that $G_n$ and $G_n'$ have the same local weak limit $G^*$ follows from
      Lemma~\ref{lem.insensitive}. 
  \end{proofof}

  \bigskip

  \begin{proofof}{Lemma~\ref{lem.insensitive}} 
      Denote by $N_n$ the set of vertices in $V(G_n) \setminus  S_n$
  which have a neighbour in $S_n$.
  
  \begin{claim} \label{claim.neighbourhood}
          For any $\epsilon \in (0,1)$ there are $\delta > 0$, $n_0 > 0$ such
          that if $n \ge n_0$ and $0 < |S_n| < \delta n_1$ then $|N_n| \le \epsilon n_1$.
  \end{claim}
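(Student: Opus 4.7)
The plan is to bound $|N_n|$ by the sum of degrees in $S_n$ and then use uniform integrability of $d_n$ to control this sum even when the vertices of $S_n$ are chosen adversarially (i.e.\ possibly all high-degree).

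First I would note that every vertex in $N_n$ is a neighbour of at least one vertex of $S_n$, so
\[
    |N_n| \le \sum_{v \in S_n} d_{G_n}(v).
\]
Next, given $\epsilon \in (0,1)$, invoke the hypothesis that $d_n$ is uniformly integrable to choose a constant $M = M(\epsilon)$ such that
\[
    \sup_n \E\, d_n \ii_{d_n > M} < \epsilon/2.
\]
Split the sum above by whether the degree exceeds $M$:
\[
    \sum_{v \in S_n} d_{G_n}(v) \;=\; \sum_{v \in S_n} d_{G_n}(v) \ii_{d_{G_n}(v) \le M} \;+\; \sum_{v \in S_n} d_{G_n}(v) \ii_{d_{G_n}(v) > M}.
\]
The first term is at most $M |S_n| \le M \delta n_1$. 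The second term is bounded by the total contribution of high-degree vertices across all of $V(G_n)$, which is $\sum_{v \in V(G_n)} d_{G_n}(v) \ii_{d_{G_n}(v) > M} = n_1 \E\, d_n \ii_{d_n > M} < (\epsilon/2) n_1$.

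Finally, choosing $\delta = \epsilon/(2M)$ gives $|N_n| \le (\epsilon/2) n_1 + (\epsilon/2) n_1 = \epsilon n_1$ for all sufficiently large $n$ (any $n_0$ will do here, since the bound is uniform in $n$). There is no real obstacle; the only point to be careful about is that the tail bound from uniform integrability is uniform in $n$, which is precisely what allows us to handle an adversarial $S_n$ consisting entirely of the highest-degree vertices.
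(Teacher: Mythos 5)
Your proof is correct and is essentially the same as the paper's: both derive the bound $\sum_{v\in S_n} d_{G_n}(v) \le M|S_n| + n_1\,\E\, d_n \ii_{d_n > M}$ from uniform integrability, with the two terms controlled by $\delta$ and the uniform tail bound respectively. The paper packages this as a proof by contradiction (assuming $|N_n| > \epsilon n_1$ and deriving a contradiction with the UI bound), whereas you present the same estimate directly, which is arguably cleaner.
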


  \begin{proof}\,
      Let $\delta$ and $n_0$ be such that $\delta < \epsilon$, $\E d_n \ii_{d_n > 0.5 \epsilon \delta^{-1}} < 0.5 \epsilon$
      for all $n \ge n_0$. Assume that  $|N_n| > \epsilon n_1$ for some $n \ge n_0$. Write $d(v) = d_{G_n}(v)$.
      We have
      \begin{align*}
          &n_1 \E d_n \ii_{d_n > 0.5 \epsilon \delta^{-1}} \ge \sum_{v \in S_n} d(v) \ii_{d(v) > 0.5 \epsilon \delta^{-1}} 
          \ge \sum_{v \in S_n} (d(v) - 0.5 \epsilon \delta^{-1}) \ii_{d(v) > 0.5 \epsilon \delta^{-1}} 
          \\ & \ge |S_n| \left(|S_n|^{-1} \sum_{v \in S_n} d(v) - 0.5 \epsilon \delta^{-1} \right)
          \ge  \epsilon n_1 - 0.5 \epsilon n_1 \ge 0.5 \epsilon n_1,
      \end{align*}
      which is a contradiction. Here we used Jensen's inequality
      and the assumption $\sum_{v\in S} d(v) \ge |N_n| > \epsilon n_1$.
  \end{proof}

  \medskip

  Now fix any positive integer $r$. As is done in \cite{bollobasjansonriordan2011},
  we apply the above claim $r$ times to get that the set $N_n^{(r)}$ of vertices
  at distance at most $r$ from $S_n$ in $G_n$ has size $o(n_1)$. Now $(G_n', v_n^*) \weaklyto G^*$
  follows since
  \[
      \pr(B_r(G_n, v^*_n) \cong B_r(G_n', v^*_n)) \ge \pr (v^*_n \not \in N_n^{(r)}) = 1 - o(1).
  \]
\end{proofof}

\section{Uncorrelated random clique trees}
\label{sec.intro}

Random intersection graphs were introduced in \cite{kssc99} and received some attention as a 
potential model for large empirical networks with clustering; see, e.g., surveys \cite{bgjkr2014a, bgjkr2014b}.
We show that in the regime which yields sparse graphs with a positive clustering
coefficient in such models the weak limit is very specific, namely it is  
an uncorrelated random clique tree, defined formally below.

Let $H = (V^1, V^2, E)$ be a bipartite graph.
The \emph{intersection graph} $G = G(H)$ of $H$ is the graph on the vertex set $V(G) = V^1$ with edges 
\[E(G) = \{uv: \exists w\in V^2 \mbox { such that } uw, wv \in H\},\]
where $e \in H$ is a shorthand for $e \in E(H)$.
An intersection graph of a random bipartite graph $H$ is called a \emph{random intersection graph}.  
It will be convenient to assume that $V^i$ consists
of the first $n_i$ elements of a countable set ${\mathcal V}^i$, where
$\cv^1 \cap \cv^2 = \emptyset$. 
The set $V^2$ is often called the set of \emph{attributes}\footnote{The names $V$ and $W$ are often used
in the literature for $V^1$ and $V^2$.}. We will call
the elements of ${\mathcal V}^i$ \emph{vertices of type $i$}.
For $v \in V^i$ we denote $S_v = \Gamma_H(v)$, $X_v = |S_v|$ where $\Gamma_H(x)$ is the set of neighbours of $v$ in the graph $H$.
Sometimes we will want to stress the type of $v$ in the notation. Since $V^i = \{v_1^{(i)}, v_2^{(i)} \dots\}$ consists
of the first $n_i$ vertices of $\cv_i$, for $v = v_j^{(i)}$ we will set $X_v^{(i)} := X_v$ and $S_v^{(i)} := S_v$.
We will denote by $X \sim Y$ the fact that $X$ and $Y$
have the same distribution.

Many different variants of the random bipartite graph $H$ have been studied, see, e.g., survey papers \cite{bgjkr2014a, bgjkr2014b}.

The \emph{active} random intersection graph: each $v \in V^1$ independently chooses $X^{(1)}_v$ from 
        a distribution $P$ on $\{0,\dots,n_2\}$, then draws a uniformly random subset $S^1_v$ of size $X^{(1)}_v$ of its neighbours from $V^2$ (independently of other vertices). A special case is the \emph{binomial} random intersection graph.

The \emph{passive} random intersection graph: each $v \in V^2$ independently chooses $X^{(2)}_v$ from 
        a distribution $P$ on $\{0,\dots,n_1\}$, then draws a uniformly random subset $S^2_v$ of size $X^{(2)}_v$ of its neighbours from $V^1$ (independently of other vertices).
 
The \emph{inhomogeneous} random intersection graph $G^{inhomog}(n_1, n_2, \xi^{(1)}, \xi^{(2)})$: the vertices $v \in V^i$ are independently assigned random non-negative weights $\xi_v^{(i)} \sim \xi^{(i)}$. Given the weights, edges $vw$ appear in $H$ independently with probability $\min(\frac {\xi_v^{(1)} \xi_w^{(2)}} {\sqrt{n_1 n_2}}, 1)$.

We will also consider random intersection graphs $G^{conf}(d_1, d_2)$ based on the \emph{configuration model}, see \cite{guillaumelatapy2006, jansonluczak2009, wormald1999}.
    Let $d_1 = \{d_{1,u}, u \in V^1\}$ and $d_2 = \{d_{2,v}, v \in V^2\}$ be sequences
    of non-negative integers indexed by $V_1$ and $V_2$ respectively such that $\sum_u d_{1,u} = \sum_v d_{2,v}$.
    The 
    random bipartite multigraph
    $H^{conf}(d_1, d_2)$ with parts $(\Vn 1, \Vn 2)$ of sizes $n_1$ and $n_2$ is obtained as follows.
    Distribute the total number of $2 \sum d_{1,u}$ half-edges among the vertices of $\Vn 1 \cup \Vn 2$ so
    that the $j$-th vertex of part $i$, $v=v_j^{(i)}$, receives  $d_{i,v}$ half-edges.
    Pick a uniformly random perfect matching between the half-edges of parts $V^1$ and $V^2$.
    In the bipartite graph, add an edge between $u$ and $v$ whenever a half-edge from $u$
    is matched with a half-edge from $v$ (we allow multiedges).

    Usually, see, e.g., \cite{bloznelis2011}, the above models yield
    random graphs with a linear number of edges and a clustering coefficient bounded away from zero only if $\frac {n_2} {n_1} = \Theta(1)$. Therefore we will assume $\frac {n_2} {n_1}  = \Theta(1)$ in this paper.

    Let $\mu$ be the distribution of a random variable $Z$ on $[0, \infty)$ with $0 < \E Z < \infty$. We denote by $Z^*$ a random variable with the \emph{size-biased} distribution\footnote{We follow the star-notation of other authors, see, e.g., \cite{arratiagoldstein2009, lovasz-hom-book}. We also use symbols such as $G^*, d^*, v^*$ to denote objects unrelated to size-biased random variables, the actual meaning should be clear from the context.}
    \[
        \mu^*(A) = (\E Z)^{-1} \int_{A} t d \mu(t)
    \]
 for any Borel set $A$. If $Z$ is integer valued, then $\pr(Z^*=k) = (\E Z)^{-1} k \pr(Z=k)$.

Given two random variables $D_1, D_2$ on $\{0,1,2, \dots\}$ with $\E D_1, \E D_2 \in (0, \infty)$ 
define a multi-type Galton-Watson process $\ct = \ct(D_1, D_2)$ as follows.
$S(0)$ consists of a single root node $r=\rr(\ct)$.
The root $r$ has a set $S(1)$ of offspring, where $|S(1)| \sim D_1$.
For each $k \ge 1$, $S(k+1)$ consists of the offspring of the nodes in $S(k)$. 
Given $|S(k)|$, the number of offspring of each node in $S(k)$ is independent and distributed as $D_{i(k)}^*-1$. Here $i(k)=2$
if $k$ is odd and $i(k)=1$ otherwise. We call $S(k)$, the set of vertices at distance $k$ from the root, the \emph{generation} $k$ of $\ct$.
A corresponding random tree, also denoted by $\ct$, is a graph on the vertex set $\cup_k S(k)$ with edges $\{uv: v\mbox{ is an offspring of }u\}$ and root $r$. Consider $\ct$ as a bipartite graph with parts $(\Vn 1, \Vn 2)$, where $\Vn 1$ and $\Vn 2$ consists of all nodes in generations $0,2,\dots$ and $1,3,\dots$ respectively. We define the \emph{uncorrelated random clique tree}  $G_{\ct}$
to be the intersection graph of $\ct$ rooted at $r$.

For a finite (random) sequence $A$, we write $X \in_u A$ to denote the fact that $X$ is chosen uniformly at random from all the elements of $A$ (given $A$).
For random variables $Z, Z_1, Z_2, \dots$ we denote by $Z_n \xrightarrow{d} Z$ the fact that $Z_n$ converges in distribution to $Z$.

Let $H$ be a rooted connected graph. For a (multi-)graph\footnote{For multigraphs
    we define $G_1 \cong G_2$ if and only if there are bijections $\phi_1: V(G_1) \to V(G_2)$
 and $\phi_2: E(G_1) \to E(G_2)$ such that $\phi_1$ maps the endpoints of $e$ to the endpoints of $\phi_2(e)$
for each edge $e \in G_1$, and $\phi_1(\rr(G_1)) = \rr(G_2)$.}
$G$ of size $n_1 \ge 1$ write
$p_r(G, H) = n_1^{-1} |\{v \in V(G): B_r(G, v) \cong H\}|$. Let $\{G_n, n=1,2,\dots\}$ be a sequence of
finite random graphs with $|V(G_n)|\ge 1$, let $v^*_n \in_u V(G_n)$ and let $G^*$ be a random graph on $(\mathcal{G}_*, d_{loc})$.
We write $\cl((G_n,v^*_n)|G_n) \weaklypto \cl(G^*)$ as $n\to\infty$ if for each non-negative integer $r$ and each rooted connected graph $H$
    \begin{equation} \label{eq.weaklimitp}
        p_r(G_n, H) \xrightarrow{p} \pr(B_r(G^*) \cong H).
    \end{equation}
As observed in the recent literature (e.g. \cite{stufler2019}), this is equivalent to the convergence of the conditional random measures $\cl((G_n,v^*_n)|G_n)$ to the (fixed) measure $\cl(G^*)$ in probability, also known as \emph{quenched} convergence. (That is, consider the space of Borel measures on $(\mathcal{G}_*, d_{loc})$ with the L\'{e}vy-Prokhorov metric $\pi$. Then $\pi(\cl((G_n,v^*_n)|G_n), \cl(G^*)) \xrightarrow{p} 0$ if and only if (\ref{eq.weaklimitp}) holds for each $r$ and each $H$ as above. This can be seen using a similar argument as (iv) on p. 72 of Billingsley~\cite{billingsley_1999}.) While this equivalence reduces many questions related to local weak limits to the classical theory for separable metric spaces, in this paper it is only used to justify our notation.
\begin{theorem} \label{thm.riglocal}
    Let $\{G_n\}$ be a sequence of random intersection graphs where the underlying bipartite graphs are $H_n = (\Vn 1, \Vn 2, F)$ with
    $\Vn 1 = \Vn 1(n), \Vn 2 = \Vn 2(n)$ and $F = F(n)$. For $i=1, 2$ write $v_i^* = v_i^*(n)$ where $v_i^*(n) \in_u \Vn i$, $n_i = n_i(n) = |\Vn i|$ and $\Xn i = \Xn i(n)  = X_{v_i^*}$.

    Suppose $\{n_1\}, \{n_2\}$ are sequences of positive integers, such that $n_1,n_2 \to \infty$, $n_2/n_1 \to \beta \in (0, \infty)$ and
    \begin{enumerate}[(i)]
        \item either $G_n$, $n=1,2,\dots$ is an active random intersection graph and there is a random variable $D_1$ with $\E D_1 \in (0, \infty)$ such that $\E \Xn 1 \to \E D_1$ and
            \begin{equation}
                \Xn 1 \xrightarrow{d} D_1; \label{eq.cond1.1}                
            \end{equation}
        \item or $G_n$, $n=1,2,\dots$  is a passive random intersection graph and there is a random variable $D_2$ with $\E D_2 \in (0, \infty)$ such that $\E \Xn 2 \to \E D_2$ and
            \begin{equation}
                \Xn 2 \xrightarrow{d} D_2; \label{eq.cond1.2}                
            \end{equation}
        \item or $G_n = G^{inhomog}(n_1, n_2, \xi^{(1)}, \xi^{(2)})$, $n = 1,2,\dots$, 
              such that for $i=1,2$ $0 < \E \xi^{(i)} < \infty$ and $\xi^{(i)}$ does not depend on $n$;
          \item or $G_n = G^{conf}(d_1, d_2)$, $n = 1,2, \dots$, where $d_1 = d_1(n)$, $d_2 = d_2(n)$ 
              are non-random\footnote{The previous version of this manuscript \cite{kurauskas2015} stated an analogous result for random $d_1, d_2$. We simplified the condition to match, e.g., \cite{andersson1998,jansonluczak2009}, the previous result follows by a simple technical argument.}
              and for $i=1,2$ we have $\E d_{1,v_i^*} \to \E D_i$ and $d_{1,v_i^*} \xrightarrow{d} D_i$.
    \end{enumerate}
 
    Then both (\ref{eq.cond1.1}) and (\ref{eq.cond1.2}) hold and
    $\cl((G_n, v^*_1)|G_n) \weaklypto \cl(G_\ct)$ with $\ct = \ct(D_1, D_2)$ where any $D_i$ that is not defined here is defined in Remark~\ref{rmk.remark1}.

\end{theorem}
The proof is available in 
Appendix~\ref{sec.proofs}.
(In the case (iv) a previous version (v2) of \cite{kurauskas2015} stated an analogous result for random sequences $d_1, d_2$. We simplified the condition to match, e.g., \cite{andersson1998,jansonluczak2009}. The previous result follows by a simple technical argument.)

Recall that given a non-negative random variable $X$, a mixed Poisson random variable with parameter $X$ attains value $k$ with probability $\E e^{-X} X^k (k!)^{-1}$ for $k = 0, 1, \dots$. We denote this distribution by $Po(X)$. 

\begin{remark}\label{rmk.remark1}
    (See also \cite{bloznelis2010, bloznelis2011}.)
    In the case (i) we have $D_2 \sim Po(\beta^{-1} \E D_1)$, in the case (ii) we have $D_1 \sim Po(\beta \E D_2)$ and
    in the case (iii) we have $D_1 \sim Po(\beta^{1/2} \xi^{(1)} \E \xi^{(2)})$, $D_2 \sim Po( \beta^{-1/2} \xi^{(2)} \E \xi^{(1)})$.
    Thus in (i)-(iv) $\beta \E D_2 = \E D_1$.
\end{remark}

\begin{remark}\label{rmk.remark2}
For arbitrary random variables $D_1'$, $D_2'$ 
on $\{0, 1, 2, \dots \}$ with positive means,
there is a sequence of random configuration intersection graphs as in (iv)
for which $D_1 = D_1'$, $D_2 = D_2'$. 
\end{remark}
Thus in the active, the passive and the inhomogeneous model either $D_1$, or $D_2$, or both, has a (mixed) Poisson distribution.
The configuration model generalises these models in terms of local weak limits. For
example, both $D_1$ and $D_2$ can be power-law.

The fact that $\ct(D_1, D_2)$ is a limit for many sparse bipartite graph sequences is intuitive and in some physics literature has
been assumed implicitly \cite{karrernewman2010, newmanstrogatzwatts2001}. 
A result similar to Theorem~\ref{thm.riglocal}~(iv) can be found in
 \cite{bordenavelelarge2009, richardsonurbanke2008}. A nice proof for almost sure convergence in random configuration graphs (which could possibly be extended to bipartite graphs) can be found in \cite{dembomontanari2010}. For completeness, we provide our own formal proof in Appendix~\ref{sec.proofs}. 
We do not use the second moment condition and work under slightly weaker assumptions (convergence in probability). We are not aware of prior literature on weak limits in cases (i)-(iii). 

%

\section{Applications}
\label{sec.applications}

The proofs of the results in this section are given in Section~\ref{sec.applications.proofs}.
Let $\{G_n\}$ be a sequence of finite random graphs,
    and $G^*$ a random element on $(\mathcal{G}_*, d_{loc})$.
Assume $|V(G_n)| \ge 1$ for all $n$ and let $v^*_n$ be chosen uniformly at random from $V(G_n)$ (given $G_n$).

\subsection{Subgraph counts in random graphs}

To apply Theorem~\ref{thm.gencounts} in a random setting we need some easy technical facts.
Due to the equivalence mentioned after (\ref{eq.weaklimitp}), the L\'{e}vy-Prokhorov metric and Skorokhod's representation theorem could be used to show these or stronger properties
(see e.g., \cite{bordenavelelarge2009, stufler2019}), however we derive them from more basic arguments.

\begin{lemma}\label{lem.gencountsprob}
    Suppose $\cl((G_n,v^*_n)|G_n) \weaklypto \cl(G^*)$ and  $\{(G_n, v^*_n)\}$ are defined on the same probability space\footnote{Here this assumption is essential; in the rest of the paper it only affects the notation.}.
    Then there is a random set $A$ of positive integers such that 
    \begin{enumerate}[a)]
        {\setlength\itemindent{25pt}  
        \item $\pr(n \in A) \to 1$ as $n \to \infty$ and
        \item almost surely $|A|=\infty$ and $(G_n,v_n^*) \weaklyto G^*$  as  $n\to\infty$, $n \in A$.
        }
\end{enumerate}
\end{lemma}

\begin{lemma}\label{lem.measureconvp}
   $\cl((G_n,v^*_n)|G_n) \weaklypto \cl(G^*)$ if and only if for each bounded continuous function $f: (\mathcal{G}_*, d_{loc}) \to \mathbb{R}$ we have 
        $\E (f(G_n, v^*_n) | G_n) \xrightarrow{p} \E f(G^*)$.
\end{lemma}

We now restate Theorem~\ref{thm.gencounts} for sequences of random graphs.
\begin{lemma}\label{lem.gencountsp}
    Let $h\ge2$ be an integer,
    suppose $\cl((G_n,v^*_n)|G_n) \weaklypto \cl(G^*)$
    and assume $\inf |V(G_n)| \to \infty$. 
    As before, denote
    $d^* = d_{G^*}(r^*)$, $r^* = \rr(G^*)$, $n_1=n_1(n)=|V(G_n)|$ and assume $\E (d^*)^{h-1} < \infty$. 
    Denote by $\dn$ the degree of a uniformly random vertex in $G_n$.
    Then the following statements are equivalent:
    \begin{enumerate}[(i)]
        \item $\E \dn^{h-1} \to \E (d^*)^{h-1}$;  \label{eq.degmoment2}
        \item $\dn^{h-1}$ is uniformly integrable; \label{eq.uint2}
        \item for any connected graph $H$ on $h$ vertices and any $H' \in \cR(H)$  \label{eq.embconv2}
            \[
                n_1^{-1} \E \emb(H, G_n) \to \E \emb'(H', G^*, r^*).
            \]
    \end{enumerate}
     Each of the above statements implies that for any connected graph $H$ on $h$ vertices and any $H' \in \cR(H)$ 
    \begin{equation}  \label{eq.embconvp}
        n_1^{-1} \emb(H, G_n) \xrightarrow{p} \E \emb'(H', G^*, r^*).
    \end{equation}
\end{lemma}

\subsection{General weakly convergent sequences}
\label{subsec.app.gen}

In this section we assume that $\cl((G_n, v^*_n)|G_n) \weaklypto \cl(G^*)$, $n_1=n_1(n)= |V(G_n)|\ge3$ is non-random and $n_1 \to \infty$. 
As before, $d_n$ is the degree of $v^* = v^*_n$ in $G_n$ and $d^*$ is the degree of the root $r^*$ of $G^*$.
Lemma~\ref{lem.gencountsp} yields convergence of ${n_1}^{-1}\emb(H, G_n)$ provided that $(|V(H)|-1)$th degree moment of $G_n$ converges. This allows us to determine the limit behaviour of statistics based on subgraph counts. 


The \emph{clustering coefficient} of a graph $G$ is defined as
\[
    \alpha(G) := \frac {\emb(K_3, G)} {\emb(P_3, G)},
\]
where $K_3$ is the clique on 3 vertices and $P_t$ is the path on $t$ vertices.
(Set $\alpha(G) := 0$ when the denominator is zero.) 
For a rooted graph $H'$ let $\hom'(H', G, v)$
denote the number of homomorphisms from $H'$ to $G$ that map $\rr(H')$ to $v$.
For $t\ge 2$ let $K_t'$ be $K_t$ rooted at any vertex and let $K_{1,t}'$ be the bipartite graph $K_{1,t}$
rooted at the vertex of degree~$t$.
\begin{corollary}\label{cor.clust}
    Suppose $\cl( (G_n, v^*_n) | G_n) \weaklypto \cl(G^*)$ and 
     $\E \dn^2 \to  \E (d^*)^2 \in (0, \infty)$. Then
     \[
         \alpha(G_n) \xrightarrow{p}  \alpha^* := \frac{\E \emb'(K_3', G^*, r^*)} {\E \emb'(K_{1,2}', G^*, r^*)} = 
         \frac{\E \emb'(K_3', G^*, r^*)} {\E (d^*)_2}. 
     \]
\end{corollary}

The \emph{assortativity coefficient}, see e.g. \cite{bloznelis2013, litvakhofstad2013}, for a graph $G$ is defined as Pearson's correlation of the degrees 
 over the neighbouring vertices
 \[
     r(G) := \frac {g(G) - b(G)^2} {b'(G) - b(G)^2},
 \]
 where
 \begin{align*}
       g(G) &:= (2 e(G))^{-1}  \sum d_G(u) d_G(v); 
       & b(G) &:= (2 e(G))^{-1}    \sum d_G(u);
     \\ b'(G) &:= (2 e(G))^{-1} \sum d_G(u)^2; & e(G) &:= |E(G)|
 \end{align*}
 and the sums are over all $2 e(G)$ ordered pairs $(u,v)$ of adjacent vertices in $G$.
 We define $r(G) := 0$ when either $e(G)$ or $b'(G) - b(G)^2$ is zero (i.e., $G$ is regular).
 The above quantities can be easily expressed in terms of subgraph count statistics, see, e.g., \cite{bollobasjansonriordan2011} and Section~\ref{sec.applications.proofs}.
Denote by $P_4'$ the graph $P_4$ rooted at one of its internal vertices. 
\begin{corollary}\label{cor.assort}
    Suppose $\cl((G_n, v^*_n)|G_n) \weaklypto \cl(G^*)$,  $\E \dn^3 \to  \E (d^*)^3 < \infty$ and $Var(d^*) > 0$.
     Then
     \begin{equation}\label{eq.cor.assort}
         \E r(G_n) \xrightarrow{p} \rho^* :=  \frac{\E d^* \E \hom'(P_4', G^*, r^*) - (\E (d^*)^2)^2 }  {\E d^* \E (d^*)^3 - (\E (d^*)^2)^2}.
     \end{equation}
\end{corollary}
Corollaries~\ref{cor.clust} and \ref{cor.assort} easily follow from Lemma~\ref{lem.gencountsp}, see Section~\ref{sec.applications.proofs}.
Notice that since $\alpha(G), r(G) \in [0, 1]$, convergence in probability in these corollaries implies
convergence of means.

Statistics that can be expressed in terms of integrals of bounded functions,
such as the limit degree distribution, are obtained directly from the local weak limit. Hence
no degree moment conditions are necessary.
Let $\pi_k(G)$
be the fraction of vertices of degree $k$ in $G$. By Lemma~\ref{lem.measureconvp}
\begin{equation}\label{eq.degdistgen}
    \pi_k(G_n) = \E (\ii_{d_{G_n}(v^*_n)=k} | G_n) \xrightarrow{p}   \pr(d^* = k).
\end{equation}


Given a graph $G$ and an integer $k \ge 2$, let $(u_1^*, u_2^*, u_3^*)$ be 
a uniformly random triple of distinct vertices from 
$V(G)$.
The \emph{conditional clustering coefficient} is  
\[
    \alpha_k(G) := \pr(u_1^* u_3^* \in G | u_1^* u_2^*, u_2^*u_3^* \in G, d(u_2^*) = k),
\]
and set $\alpha_k(G) := 0$ if the event in the condition has probability zero.
Lemma~\ref{lem.measureconvp} implies that if $\pr(d^* = k) > 0$ then 
\begin{equation}\label{eq.cclust}
    \alpha_k(G_n) \xrightarrow{p} \alpha_k^* = \frac{\E \ii_{d^* = k} \emb'(K_3', G^*, r^*)} {k(k-1)\pr(d^* = k)}.
\end{equation}

The \emph{conditional assortativity}, see \cite{bloznelis2013} is defined as  
\[
    r_k(G) := \E(d_G (u_2^*) | u_1^* u_2^* \in G, d(u_1^*) = k),
\]
and set $r_k(G) := 0$ if the event in the condition has probability zero. Let $P_3'$ be $P_3$ rooted at one of the endpoints.
\begin{corollary}\label{cor.cassort}
    Suppose $\cl((G_n,v^*_n)|G_n) \weaklypto  \cl(G^*)$, $\E \dn^2 \to \E (d^*)^2 < \infty$ and $\pr(d^* = k) > 0$. Then
\begin{equation*}
    r_k(G_n) \xrightarrow{p} r_k^* = \frac{\E \ii_{d^* = k} \hom'(P_3', G^*, r^*)} {k \pr(d^* = k)} = 1 + \frac{\E \ii_{d^* = k} \emb'(P_3', G^*, r^*)} {k \pr(d^* = k)}.
\end{equation*}
\end{corollary}

In a similar way we can study the bivariate degree distribution \cite{bloznelis2014} and many other functionals.

\subsection{The case of random intersection graphs}

Here we apply the above general results in the case where the limit
    is the uncorrelated clique tree of Section~\ref{sec.intro}. We stress that Theorem~\ref{thm.gencounts} and its corollaries are applicable to a much broader class of sequences; including the inhomogeneous sparse random graph and the preferential attachment model \cite{bergerborgschayessaberi2014, bollobasjansonriordan2011}, general random configuration graphs with their many potential applications, see e.g. \cite{jansonluczak2009}, and random graphs from certain minor-closed classes, including random planar graphs \cite{georgakopauloswagner2015, psw2016, stufler2019}.

 Theorem~\ref{thm.riglocal} yields the first main condition (convergence to a local weak limit) for Lemma~\ref{lem.gencountsp}.
 For the other condition (convergence of a degree moment) we prove
\begin{lemma} \label{lem.rigdeg}
    Let $\{G_n\}$ be a sequence as in Theorem~\ref{thm.riglocal} and
    let $k$ be a positive integer.
    Suppose an additional condition for each of the cases (i)-(iv) of Theorem~\ref{thm.riglocal} holds:
       \begin{enumerate}[(i)]
        \item $\E (X^{(1)})^k \to \E D_1^k < \infty$;
        \item $\E (X^{(2)})^{k+1} \to \E D_2^{k+1} < \infty$;
        \item $\E (\xi^{(1)})^k < \infty$ and $\E (\xi^{(2)})^{k+1} < \infty$;
        \item 
            $\E d_{1,v_1^*}^k \to \E D_1^k < \infty$ and\footnote{We simplified (iv) of \cite{kurauskas2015} to fixed sequences and dropped a redundant assumption. To extend it to random $d_1, d_2$ use similar arguments as in the proof of Lemma~\ref{lem.gencountsp}.} $\E d_{2,v_2^*}^{k+1} \to \E D_2^{k+1} < \infty$. 
    \end{enumerate}
 Then $\E (d^*)^k < \infty$ and $\E \dn^k \to \E (d^*)^k$.
   \end{lemma}
\medskip
  The special case of (i) where $k \le 2$ was shown in \cite{blozneliskurauskas2013}. Here
  we use a different argument based on Theorem~\ref{thm.riglocal}, see Section~\ref{sec.applications.proofs}.

%


    Using the same notation as in Section~\ref{subsec.app.gen}, assume  that $\cl((G_n,v^*_n)|G_n) \weaklypto  \cl(G^*) = \cl(G_\ct)$ 
  where $\ct = \ct(D_1, D_2)$, $\E D_1 > 0$ and $\E D_2 > 0$. 
Let $Z_1, Z_2, \dots \sim D_2^*-1$ be independent and independent of $D_1$.
By (\ref{eq.degdistgen}) we have  
$\pi_k(G_n) \xrightarrow{p} \pr(d^* = k) = \pr(\sum_{i=1}^{D_1} Z_i = k)$.
For sequences of graphs as in Theorem~\ref{thm.riglocal}(i), (ii), (iii) the corresponding
convergence of means has been shown in \cite{bloznelis2008, bloznelisdamarackas2013}, see also
Remark~\ref{rmk.remark1}. 
We also notice that the second moment condition required in \cite{bloznelisdamarackas2013} for the inhomogeneous model
is not necessary.

By simple calculations we get 
\begin{align*}
    &\E Z_1^k = \frac {\E (D_2 -1)^k D_2 } {\E D_2}; \quad \E (Z_1)_k = \E (D_2)_{k+1} (\E D_2)^{-1}  \quad k = 1, 2, \dots;
 \\ &\E d^* = \E (Z_1 + \dots + Z_{D_1}) =  \E D_1 \E Z_1;  
 \\ &\E (d^*)^2 = \E D_1 \E Z_1^2 + \E (D_1)_2 (\E Z_1)^2;
 \\ &\E (d^*)^3 = \E D_1 \E Z_1^3  + 3 \E (D_1)_2 \E Z_1 \E Z_1^2 + \E (D_1)_3 (\E Z_1)^3;
 \\ &\E \emb'(K_3', G^*, r^*) = \E D_1 \E (Z_1)_2 = \E D_1 (\E Z_1^2 - \E Z_1)
\end{align*}
and
\begin{align} \label{eq.P4'}
    &\E \hom'(P_4', G^*, r^*) =  \E D_1 \E Z_1^3  + \E (D_1)_2 \E Z_1 \E Z_1^2 + \frac {\E (D_1)_2} {\E D_1} \E (d^*)^2 \E Z_1 .
\end{align}
(The above estimates hold also in the case when either side is infinite.)


 
When $\E D_2^3 < \infty$ and $\E D_1^2 < \infty$, we have
\[
   \alpha^* = \frac {\E D_1 \E D_2 \E (D_2)_3} {\E D_1 \E D_2 \E(D_2)_3 + \E (D_1)_2 (\E (D_2)_2)^2}.
\]
in Corollary~\ref{cor.clust}. Using Remark~\ref{rmk.remark1},
this simplifies to $\alpha^* = \E D_1 / \E D_1^2$ for active 
and to $\alpha^* = \E (D_2)_3 (\E (D_2)_3 + \beta (\E (D_2)_2)^{-2}$ for passive random intersection graphs.
This is equal to a related estimate $\hat{\alpha} = \lim \E \emb(K_3, G_n) (\E \emb(P_3, G_n))^{-1}$
obtained by Bloznelis \cite{bloznelis2011} and the estimates of Godehardt, Jaworski and Rybarczyk \cite{gjr2012}
for these particular models. 

Similarly, if $\E D_1^2 < \infty$ and $\E D_2^4 < \infty$ then $\rho^*$ in Corollary~\ref{cor.assort} is a rational function of $\E D_1$, $\E D_1^2$ and $\E D_2^k$, $k=1,2,3,4$ obtained by using (\ref{eq.P4'}) and the above expressions for $\E (d^*)^j$ in (\ref{eq.cor.assort}). 
One can check by simple algebra that $\rho^*$ is equal
to $\hat{\rho} = \lim (\E g(G_n) - \E b(G_n)^2) (\E b'(G_n) - \E b(G_n)^2)^{-1}$ computed
in \cite{bloznelis2013} for sparse passive and active random intersection graphs.

Assuming only that 
$\pr(d^* = k) > 0$ we get in (\ref{eq.cclust})
\begin{equation} \label{eq.condclust.1}
    \alpha_k^* = \frac { \E (\sum_{j=1}^{D_1} Z_i (Z_i - 1) | d^* = k)  } { k (k-1)}.
\end{equation}
If $D_2 \sim \Po(\lambda)$ as is the case, e.g.,  for the active random intersection graph of Theorem~\ref{thm.riglocal}(i), then
as in \cite{bloznelis2013} (but without a second moment assumption)
\begin{equation}\label{eq.condclust.2}
   \alpha_k^* = \frac {\lambda \pr(d^* = k-1)} {k \pr(d^* = k)}.
\end{equation}

Finally, if $\E D_1^2 < \infty$, $\E D_2^2 < \infty$ in Corollary~\ref{cor.cassort} we have
\[
    r_k^* = k^{-1} \E \left( \sum_{i=1}^{D_1} Z_i^2 \bigg| d^* = k \right) + \frac {\E (D_1)_2 \E (D_2)_2} {\E D_1 \E D_2}.
\]
This agrees with a related estimate obtained for active and passive random intersection graphs in \cite{bloznelis2013}.

Thus Corollaries~\ref{cor.clust}--\ref{cor.cassort}
generalise several previous results for particular random intersection graph models
to arbitrary sequences of graphs with the uncorrelated clique tree as a limit.
Applying them together with
Lemma~\ref{lem.rigdeg} with an appropriate $k$ yields slightly stronger versions (i.e., convergence in probability and optimal moment conditions) 
of these results
for the active and passive random intersection graphs with bounded expected degree. 
We are not aware of similar prior results for the inhomogeneous and configuration models.



\subsection{Proofs}
\label{sec.applications.proofs}

The \emph{radius} of a connected rooted graph is the maximum distance
from any vertex of the graph to the root.

\begin{proofof}{Lemma~\ref{lem.gencountsprob}}
    Let $H_1, H_2 \dots$ be an enumeration of finite graphs in $\mathcal{G}_*$. 
    For positive integers $i, n$ define the event 
    \[
        B(i,n) = \{\exists j \le i: \left | p_{r_j}(G_n, H_j) - \pr( B_{r_j}(G^*) \cong H_j) \right | > i^{-1} \},
    \]
    where $r_j$ is the radius of $H_j$. By the assumption of the lemma $\pr(B(i,n)) \to 0$ for each $i=1,2,\dots$.
    Define $N_1 = 1$, $N_i, i = 2,3,\dots$ by taking 
    $N_i = 1 + \sup \{n > N_{i-1} : \pr(B(i,n)) > i^{-1}\}$. Let $i(n) = \max\{i : N_i \le n\}$.
    Now let $A = \{n: \overline{B(i(n),n)} \}$.
    We have $\pr(n \not \in A) = \pr(B(i(n), n)) \le i(n)^{-1} \to 0$.
    For any sequence of events $\{A_n, n\ge1\}$ on the same probability
    space $(\Omega, \cf, \pr)$ we have
    \begin{equation}\label{eq.limsupA}
        \pr(\cap_{m \ge 1} \cup_{n\ge m} A_n) \ge \lim \sup \pr(A_n).
    \end{equation}
    Thus $\pr(|A| = \infty) \ge \lim \sup \pr(\overline{B(i(n),n)}) = 1$. 
    Now by the definition of $B(i,n)$ on the event $|A| = \infty$ we have $(G_n, v_n^*) \weaklyto G^*$ as $n\to\infty$, $n \in A$.
\end{proofof}

\bigskip

\begin{proofof}{Lemma~\ref{lem.measureconvp}}
    ($\Leftarrow$) The function $(\mathcal{G}_*, d_{loc}) \to \mathbb{R}$ that maps $(G,v)$ to $\ii_{B_r(G,v) \cong H}$ is bounded and continuous for each $r\ge 0$ and connected rooted graph~$H$.

    ($\Rightarrow$) Without loss of generality we may assume $\{(G_n,v_n^*)\}$ are defined on a single probability space.
   Let $A$ be a random set guaranteed by Lemma~\ref{lem.gencountsprob}. Suppose there is some $\eps > 0$, 
   a bounded continuous function $f$
   and an infinite subset of positive integers $B$, such that 
    $\pr( |\E(f(G_n,v_n^*)|G_n) - \E f(G^*)| > \eps) > \eps$ for all $n \in B$.
    Define a random set $C = \{n \in B: |\E(f(G_n,v_n^*)|G_n) - \E f(G^*)| > \eps\}$. Since for $n \in A \cap B$ we have $\pr(n \in A \cap C) \ge \eps - o(1)$, by
    (\ref{eq.limsupA}) $\pr(|A \cap C| = \infty) \ge \eps$. However, $(G_n,v_n^*) \weaklyto G^*$ when $n \to \infty, n \in A$,
    by (\ref{eq.measureconv}) this is a contradiction to our assumption.
\end{proofof}

\medskip

\begin{proofof}{Lemma~\ref{lem.gencountsp}}
    We can assume $n_1 \ge 1$.

    \textit{ (\ref{eq.degmoment2}) $\Leftrightarrow$  (\ref{eq.uint2})}.
    This follows by Lemma~\ref{lem.uint} 
    since Lemma~\ref{lem.measureconvp} implies convergence in distribution
    of $d_n^{h-1}$ to $(d^*)^{h-1}$.

    \textit{ (\ref{eq.degmoment2}) $\Rightarrow$  (\ref{eq.embconvp}), (\ref{eq.embconv2})}.
    Assume (\ref{eq.degmoment2}).  
    Then there is a positive sequence $a_n \to 0$,
    such that $|\E d_n^{h-1} - \E (d^*)^{h-1}| \le a_n$.
    The empirical $(h-1)$-st moment of the degree of $G_n$ is
    $D_n = n_1^{-1} \hom(K_{1,h-1}, G_n)$.  
    Also $D_n = \sum_{H \in \mathcal{S}} d_{H}(\rr(H))^{h-1} p_r(G_n, H)$,
    where $\mathcal{S}$ consists of graphs in $\mathcal{G}_*$ of radius $1$.
Since $\E D_n = \E d_n^{h-1}$ it follows
    by (\ref{eq.degmoment2}) and  (\ref{eq.weaklimitp})  that $D_n \xrightarrow{p} \E(d^*)^{h-1}$. 
    So there is a positive sequence $\eps_n \to 0$,  such that for all $n$
    \begin{equation*} 
        \pr(|D_n - \E (d^*)^{h-1}| > \eps_n) \le \eps_n.
    \end{equation*}
    We may assume that $\eps_n \ge a_n$.
    Let the random set $C$ consist of those $n$ for which $|D_n - \E (d^*)^{h-1}| \le \eps_n$. It follows by (\ref{eq.limsupA}) 
    that $\pr(|C| = \infty) = 1$, $\pr(n \in C) \to 1$ and
    on the event $|C| = \infty$, 
    $D_n \to \E (d^*)^{h-1}$
    as $n \to \infty$, $n \in C$.

    Let $A$ be the random set guaranteed by Lemma~\ref{lem.gencountsprob}. On the event $|A \cap C| = \infty$,
    the subsequence of graphs $\{G_n, n \in A \cap C\}$ satisfies the conditions of 
    Theorem~\ref{thm.gencounts}.
    
    Assume (\ref{eq.embconvp}) does not hold.
    Then there is $H' \in \mathcal{R}(H)$, $\eps > 0$
    and a deterministic infinite set $D$ of positive integers such that for all $n \in D$
    \[
        \pr( |X_n  - \E X^*| > \eps) > \eps.
    \]
    Here $X_n = n_1^{-1} \emb(H, G_n)$ and $X^* = \emb'(H', G^*, r^*)$.
    Let $D_1 \subseteq D$ consist of those $n$ in $D$ for which $|X_n  - \E X^*| > \eps$.
    Again, by (\ref{eq.limsupA}),  we get that $\pr(|A\cap C \cap D_1| = \infty) \ge \epsilon$.
    On this event $X_n \not \to \E X^*$ as $n \to \infty, n \in A \cap C$.
    This is a contradiction to Theorem~\ref{thm.gencounts}~(\ref{eq.embconv}).

    It remains to show (\ref{eq.embconv2}). 
    Write  $Y_n = n_1^{-1} \hom(H, G_n)$.
    Trivially, $X_n \le Y_n$, and by Lemma~\ref{lem.sidorenko}
    $Y_n \le D_n$. So for any $t > 0$
    \[
        \E X_n \ii_{X_n > t} \le \E Y_n \ii_{Y_n > t} \le \E D_n \ii_{D_n > t}.
    \]
    Since $\E D_n \to \E (d^*)^{h-1}$ and $D_n \xrightarrow{p} \E (d^*)^{h-1}$, $D_n$ is uniformly integrable, 
    and so is $X_n$.
    Since $X_n$ also converges in probability by (\ref{eq.embconvp}), (\ref{eq.embconv2}) follows by Lemma~\ref{lem.uint}.

    \emph{(\ref{eq.embconv2})$\Rightarrow$(\ref{eq.degmoment2}).} The proof is identical to that
    of the corresponding implication of Theorem~\ref{thm.gencounts}.
\end{proofof}

\medskip

\begin{proofof}{Corollary~\ref{cor.clust}}
    Apply Lemma~\ref{lem.gencountsp}.
\end{proofof}

\medskip




\begin{proofof}{Corollary~\ref{cor.assort}}
    For non-empty $G$ we have
    \begin{align*}
        &g(G) = \frac {\hom(P_4, G)} {\emb(K_2, G)};
        &
        &b(G) = \frac {\hom(K_{1,2}, G)} {\emb(K_2, G)};
        &
        &b'(G) = \frac {\hom(K_{1,3}, G)} {\emb(K_2, G)}.
    \end{align*}
    Let $S(t,j)$ denote Stirling numbers of the second kind. 
    Using Lemma~\ref{lem.gencountsp},
    \begin{align*}
        &n^{-1} \emb(K_2, G_n) \xrightarrow{p} \E d^*;
        \\    &n^{-1} \hom(P_4, G_n) = n^{-1} \left(\emb(P_4, G_n) + \emb(K_3, G_n) + 2 \emb(P_3, G_n) +  \emb(K_2, G_n) \right)
        \\ &
        \xrightarrow{p} \E \left( \emb'(P_4', G^*) + \emb'(K_3', G^*) + \emb(K_{1,2}', G^*) + \emb(P_3', G^*) + \emb'(K_2', G^*)\right) 
        \\& =  \E \hom'(P_4', G^*,r^*);
        \\ &n^{-1} \hom(K_{1,t}, G_n) = n^{-1} \sum_{j=1}^t S(t, j) \emb(K_{1,j}, G_n) \xrightarrow{p} \E \hom'(K_{1,t}', G^*,r^*) = \E (d^*)^t
    \end{align*}
    for $t = 2, 3$. The claim follows by the definition of $r(G)$. 
\end{proofof}

\medskip





   \medskip 

\begin{proofof}{Corollary~\ref{cor.cassort}}
   Note that $\pi_k(G_n) \xrightarrow{p} \pr(d^*=k) > 0$ and when $\pi_k(G) > 0$ we have
   \begin{align}
       r_k(G) &= \frac{\E \left( d_{G}(u_2^*) \ii_{d_{G_n}(u_1^*)=k} \ii_{u_1^*u_2^* \in G} | G_n=G \right)} {\pr(d_{G_n}(u_1^*) = k, u_1^*u_2^* \in G | G_n = G)} \nonumber
       \\   &=\frac{(n_1)_2^{-1} H(G)} {k (n_1-1)^{-1} \pi_k(G)} = n_1^{-1} H(G) (k \pi_k(G))^{-1}, \label{eq.cassort.sort}
   \end{align}
   where $H(G)$ is the number of homomorphisms from $P_3 = x y z$ to $G$ so that $x$ is mapped to a vertex of degree $k$.    Denote by $H_t(G)$ the number of such homomorphisms where additionally $y$ is mapped to a vertex of degree at most $t$,
   and let ${\bar H}_t(G) = H(G) - H_t(G)$.

   Fix  $\delta > 0$. We will show that for any $\eps > 0$ and all $n$ large enough
   \begin{equation}\label{eq.cassort.toshow}
   \pr(|n_1^{-1}H(G_n) - \E \ii_{d^*=k}\hom'(P_3', G^*, r^*) | > \delta) \le \eps,
   \end{equation}
   i.e. $n_1^{-1}H(G_n) \xrightarrow{p} \E \ii_{d^*=k}\hom'(P_3', G^*, r^*)$.

   By Lemma~\ref{lem.measureconvp}
   \begin{align*}
       &n_1^{-1} H_t(G_n) = \E\left(\ii_{d_{G_n}(u_1^*)=k} \sum_{u: u u_1^* \in G_n} d_{G_n}(u) \ii_{d_{G_n}(u) \le t} \Big | G_n\right)
   \\  &\xrightarrow{p}  h_t^* = \E \left(\ii_{d^*=k} \sum_{u: u r^* \in G^*} d_{G^*}(u) \ii_{d_{G^*}(u) \le t}\right).
   \end{align*}
    Also $h_t^* \to h^* = \E \ii_{d^* = k} \hom'(P_3', G^*, r^*)$ as $t \to \infty$ since $\cl((G_n, v^*_n)|G_n) \weaklypto \cl(G^*)$ and $h^* \le \E \hom'(P_3', G^*, r^*) < \infty$ by
   Lemma~\ref{lem.gencountsp}.
   Therefore we can pick $t_1$ such that for $t \ge t_1$ and all $n$ large enough
   \[
       \pr(|n_1^{-1} H_{t}(G_n) - h_{t}^*| > \frac \delta 4) \le \frac \eps 4; \quad |h_{t}^* - h^*| \le \frac \delta 4
   \]
   and so
   \begin{equation}\label{eq.split1}
       \pr(|n_1^{-1} H_{t}(G_n) - h^*|> \frac \delta 2) \le \frac \eps 2.
   \end{equation}
   Next, note that $\bar{H}_t(G_n) \le \sum_{v \in V(G_n)} d_{G_n}(v)^2 \ii_{d_{G_n}(v) > t}$. So by Markov's inequality
   \[
       \pr(n_1^{-1} {\bar H}_t(G_n) > \delta/2) \le 2 \delta^{-1} n_1^{-1} \E {\bar H}_t(G_n) \le 2 \delta^{-1} \E \dn^2 \ii_{\dn > t}.
   \]
   $\dn^2$ is uniformly integrable by Lemma~\ref{lem.gencountsp}, so there is $t_2$ such that for all $t \ge t_2$
   and all large enough $n$
   \begin{equation}\label{eq.split2}
       \pr(n_1^{-1} {\bar H}_{t}(G_n) > \delta/2) \le \frac \eps 2.
   \end{equation}
   Now (\ref{eq.cassort.toshow}) follows by setting $t = \max(t_1, t_2)$ and combining (\ref{eq.cassort.sort}), (\ref{eq.split1}) and (\ref{eq.split2}).
\end{proofof}

\medskip

\begin{proofof}{Lemma~\ref{lem.rigdeg}}
     Let $Z_1, Z_2, \dots \sim D_2^* - 1$ and $D_1$ be independent.
     For a random variable $X$ with $\E X \in (0, \infty)$ and its size-biased version $X^*$ we have 
     \[
         \E (X^*-1)_j = \sum_{m \ge 1} (m-1)_j \frac{m \pr(X=m)} {\E X}  = \frac{ \E (X)_{j+1}} {\E X}, \quad j = 1, 2, \dots 
     \]
     Thus using the assumptions and Remark~\ref{rmk.remark1} 
     \begin{equation} \label{eq.factD2}
         \E (Z_1)_j = \E (D_2)_{j+1} (\E D_2)^{-1} < \infty \mbox{ for } j=1, \dots, k.
     \end{equation}
     Similarly $\E Z_1^k = (\E D_2)^{-1} \E D_2^{k+1} < \infty$. 
     Write $\binom k {k_1, \dots, k_j} = \frac {k!} {k_1! \dots k_j!}$.
     Conditioning on $D_1$, using linearity of expectation and symmetry,
     we get
     \begin{align}
         \E (d^*)^k &= \E (\sum_{i=1}^{D_1} Z_i)^k 
         \nonumber
         \\ &
         = \sum \binom {k} {k_1, \dots, k_j} \E \binom {D_1} {j}  \E Z_1^{k_1} \times \dots \times \E Z_j^{k_j} \in (0,\infty).
         \label{eq.d*k}
     \end{align}
     Here the sum is over all $j$ and all tuples of positive integers $(k_1, \dots, k_j)$ such that $k_1 + \dots + k_j = k$.
     Write $\dn = d_{G_n}(v^*_n)$ and recall that $v^*_n$ is a uniformly random vertex from $V(G_n)$.
     By Theorem~\ref{thm.riglocal} $\cl((G_n,v^*_n)|G_n) \weaklypto  \cl(G^*)$, so $\dn^k \xrightarrow{d} (d^*)^k$.
     By Fatou's lemma
     \[
         \E (d^*)^k \le \lim \inf \E \dn^k.
     \]
     We assume without loss of generality that in the case (iv)
     the sequences $d_1(n)$ and $d_2(n)$ (not to be confused with the random variable $d_n$) are symmetric random permutations of two fixed degree sequences (each permutation of
     a particular sequence is equally likely).
     So in all cases (i)-(iv) by symmetry $\E d_{G_n}(v_1)^k = \E \dn^k$, where $v_1$
     is a fixed vertex in $V(G_n)$.
     For each of the random intersection
     graph models we will show
     \begin{equation} \label{eq.rigdeg.upper}
         \E d_{G_n}(v_1)^k \le \E (d^*)^k + o(1).
     \end{equation}
     Let $\ct \sim \ct(D_1,D_2)$. Assume that $D_1 = d_\ct(\rr(\ct))$,  $x_1, \dots, x_{D_1}$
     are the children of $\rr(\ct)$
     and $Z_i$ is the number of children of $x_i$.
     For each $n$ define a bipartite graph (a tree) $\tilde{H}_n$ as follows. 
     On the event $D_1 > n_2$, let $\tilde{H}_n$ be a tree consisting of just the root $\tilde{v}_1$.
     On the event $D_1 \le n_2$, let $\tilde{H}_n$ be the subtree induced
     by generations $0, 1$ and $2$ of $\ct$, but take only the first
     $Z_i' = Z_i \ii_{Z_i \le n_1 -1}$ children for the node $x_i$, $i = 1, \dots, D_1$.
     Label the root $v_1$. Given $D_1, Z_1, \dots, Z_{D_1}$
     draw labels for $x_1, \dots, x_{D_1}$ from $\Vn 2$ uniformly at random without replacement
     and draw $Z_i'$ distinct labels from $\Vn 1 \setminus \{v_1\}$ for the children of $x_i$ $i=1,\dots,D_1$, for each $i$ (conditionally) independently.
     Here $\Vn i = V_i(H_n)$ is the set of first $n_i$ vertices of the fixed ground set $\cv^i$ as in Theorem~\ref{thm.riglocal}.

     Write $\tilde{d}_n = \ii_{D_1 \le n_2} \sum_{i=1}^{D_1} Z_i'$ and notice that ${\tilde d}_n$ is an upper bound on the degree of $v_1$ in the resulting
     intersection graph. We have as in (\ref{eq.d*k})
     \begin{align}
         \E {\tilde d}_n^k 
         \nonumber
         &= \sum \binom k {k_1, \dots, k_j} \E \binom {D_1} j  \ii_{D_1 \le n_2} \E (Z_1')^{k_1}\times \dots \times \E (Z_t')^{k_t}
         \\ &= \E (d^*)^k - o(1). \label{eq.tdk}
     \end{align}
     Here we used (\ref{eq.factD2}),  (\ref{eq.d*k}) and bounds
     \[
         \E (D_1)_j \ii_{D_1 \le n_2} = \E (D_1)_j - o(1); \quad \E (Z_1')^j = \E Z_1^j - \E Z_1^j \ii_{Z_1 > n_1 -1} = \E Z_1^j - o(1),
     \]
     valid for any $j \le k$ by Lemma~\ref{lem.uint}. 
     Thus it suffices to prove that $\E d_{G_n}(v_1)^k \le \E {\tilde d}_n^k + o(1)$. Recall that $H_n$ is the bipartite
     graph underlying the intersection graph $G_n$. 
     Call a path  $xyz$ \emph{good} if $x = v_1$, $y \in \cv_2$ and $z \in \cv_1 \setminus \{v_1\}$. We have
    \[
        d_{G_n}(v_1) \le \sum \ii(v_1 w v, H_n) \quad\mbox{and}\quad \quad {\tilde d}_n = \sum \ii(v_1 w v, {\tilde H}_n)
    \]
    where the sum is over all 
    good paths $v_1 w v$ and $\ii(F, H)$ is the indicator
    of the event that $F \subseteq E(H)$.

    For any graph $H$ we denote $v(H) = |V(H)|$ and $e(H) = |E(H)|$. If $H$ is bipartite (more precisely, 2-coloured) 
    $v_j(H)$, $j=1, 2$ denotes the size of $j$-th part $V_j$ of $H$. 
    Define an equivalence relation between bipartite graphs $H' = (V_1', V_2', E')$, 
    $H'' = (V_1'', V_2'', E'')$: $H' \sim H''$ if and only if there is an isomorphism from $H'$
    to $H''$ that maps $V_j'$ to $V_j''$, $j=1,2$.
    Let $\cf_k$ consist of one member for each equivalence class of
    all graphs formed from a union of $k$ good paths
    (a not necessarily disjoint \emph{union} of graphs $G_1 = (V_1, E_1)$, $\dots$, $G_k = (V_k, E_k)$
    is a graph $(V_1 \cup \dots \cup V_k, E_1 \cup \dots \cup E_k)$). For $H' \in \cf_k$, let $N(H')$
    be the number of distinct tuples of $k$ good paths whose union is a bipartite graph $H''$ with parts $V_1'' \subseteq \Vn 1$ and $V_2'' \subseteq \Vn 2$ such that
    $H'' \sim H'$. It is 
    easy to see that there are positive constants $c(H'), C(H')$, such that for all $n$ large enough
    \begin{equation}\label{eq.cH}
        N(H') = c(H') (n_1)_{v_1(H')-1} (n_2)_{v_2(H')} = C(H') n_1^{v(H') - 1} (1 + o(1)).
    \end{equation}
    By linearity of expectation
    \begin{align*}
        &\E d_{G_n}(v_1)^k \le \E (\sum \ii(v_1 w v, H_n))^k =\sum_{H' \in \cf_k} N(H') \E \ii(H', H_n).
    \end{align*}
    and similarly
    \[
        \E {\tilde d}_n^k = \sum_{H' \in \cf_k} N(H') \E \ii(H', {\tilde H}_n).
    \]
    Using (\ref{eq.tdk}),  (\ref{eq.cH}) and the fact that $\cf_k$ is finite, in order
    to prove (\ref{eq.rigdeg.upper}) it suffices to check that
    \begin{equation}
    \label{eq.iiH}
        \E \ii(H', H_n) \le \E \ii(H', \tilde{H}_n) + o (n_1^{-v(H')+1}) \quad \mbox{ for each } H' \in \cf_k.
    \end{equation}
    So fix any $H' \in \cf_k$. 
    Suppose $v_2(H') = t$ and the degrees 
    of vertices in $V_2(H')$ are $b_1, \dots, b_t$. Note that $b_j \le k+1$ for $j = 1, \dots, t$.
    Conditioning on $D_1$ and the positions of generation 1
    nodes labelled $V_2(H')$ and using (\ref{eq.factD2}) 
    \begin{align}
        &\E \ii(H', {\tilde H}_n) = \E \frac {(D_1)_t \ii_{D_1 \le n_1}} {(n_2)_t} \frac{(Z_1')_{b_1-1}} {(n_1 - 1)_{b_1-1}}  \times \dots \times
        \frac{(Z_t')_{b_t-1}} {(n_1 - 1)_{b_t-1}} \nonumber
        \\ &= n_2^{-t} n_1^{-e(H') + t} \E (D_1)_t \prod_{i=1}^t \E (Z_i')_{b_i-1} (1+o(1)) \nonumber
        \\ &= \beta^{-t} n_1^{-e(H')} \E (D_1)_t (\E D_2)^{-t} \prod_{i=1}^t \E (D_2)_{b_i} (1+o(1)). \label{eq.tH}
    \end{align}
    Now if $H'$ is a tree then $e(H') = v(H')-1$ and (\ref{eq.iiH}) follows if
    \begin{equation} \label{eq.iiH1}
         \E \ii(H', H_n) \le \E \ii(H', \tilde{H}_n) (1 + o(1)).
    \end{equation}
    Meanwhile, if $H'$ has a cycle then $e(H') \ge v(H')$ and
    $\E \ii(H', {\tilde H}_n) = O(n_1^{-v(H')})$, so (\ref{eq.iiH})
    follows whenever
    \begin{equation} \label{eq.iiH2}
        \E \ii(H', H_n) = o(n^{-v(H')+1}).
    \end{equation}
    We now consider (\ref{eq.iiH}) for each model separately.

    \emph{(i) (active intersection graph)}
    Let $a_1, \dots, a_s$ be the degrees of vertices in $V_1(H')$.
    We can assume $a_1 = d_{H'}(v_1) = t$.
    Of course, $a_j \le k$, $j = 1, \dots, s$. Since the vertices in $V_1(H')$ choose
    their neighbours independently, using Lemma~\ref{lem.uint}
    \[
        \E \ii(H', H_n) = \E \prod_{i=1}^s \frac { (X_v)_{a_i}} { (n_2)_{a_i}} = n_2^{-e(H')} \prod_{i=1}^s  \E (D_1)_{a_i} (1 + o(1)).
    \]
    If $H'$ has a cycle then $e(H') \ge v(H')$ and $\E \ii(H', H_n) = O \left(n_1^{-e(H')}\right)$, so (\ref{eq.iiH2}) holds.

    By Remark~\ref{rmk.remark1}, $D_2 \sim Po(\beta^{-1} \E D_1)$. So $\E (D_2)_{b_i} = (\beta^{-1} \E D_1)^{b_i}$.
    Thus (\ref{eq.tH}) reduces to
    \[
        \E \ii(H', {\tilde H}_n) = (\beta n_1)^{-e(H')} \E (D_1)_t  (\E D_1)^{e(H') - t} (1 + o(1))
    \]
    If $H'$ has no cycle, then $a_j = 1$ for all $j \ge 2$. Thus
    \begin{align*}
        \E \ii(H', H_n) &\le 
        n_2^{-e(H')} \E (D_1)_t (\E D_1)^{e(H')-t} (1 + o(1))
    \end{align*}
    and (\ref{eq.iiH2}) follows.

    \emph{(ii) (passive intersection graph)} Since $b_i \le k+1$ for $i = 1, \dots, t$ by the assumption (ii) of the lemma
    \[
        \E \ii(H', H_n) = \E \prod_{i=1}^s \frac { (X_v)_{b_i}} {(n_1)_{b_i}} = n_1^{-e(H')} \prod_{i=1}^s \E (D_2)_{b_i} (1 + o(1)). 
    \]
    Using Remark~\ref{rmk.remark1}, $D_1 \sim Po(\beta \E D_2)$, so $\E (D_1)_t = \beta^t (\E D_2)^t$. Therefore (\ref{eq.tH})
    reduces to
    \[
        \E \ii(H', {\tilde H}_n) =  n_1^{-e(H')} \prod_{i=1}^s \E (D_2)_{b_i} (1 + o(1))
    \]
    and (\ref{eq.iiH}) follows.

    \emph{(iii) (inhomogeneous random intersection graph) } Let $\{\xi_u: u \in V(H')\}$ be independent random variables
    such that $\xi_u \sim \xi^{(i)}$ for $u \in V_i(H')$, $i = 1, 2$. Write $a \wedge b = \min(a,b)$. Then
    \begin{align*}
        \E \ii(H', H_n) &= \E \prod_{uv \in E(H')}\left( \frac {\xi_u \xi_v} {\sqrt{n_1 n_2}} \wedge 1\right) 
       \\               &\le \beta^{-e(H')/2}n_1^{-e(H')} \prod_{u \in V(H')} \E \xi_u^{d_{H'}(u)} (1 + o(1)).
    \end{align*}
    If $H'$ contains a cycle, then by the assumption that $\E (\xi^{(1)})^k$ and $\E (\xi^{(2)})^{k+1}$ are finite, we get that $\E \ii(H', H_n) = O\left(n_1^{-v(H')}\right)$, so (\ref{eq.iiH2}) holds. If $H'$ is a tree then
    \begin{equation}\label{eq.silhuette}
        \E \ii(H', H_n) \le \beta^{-e(H')/2} n_1^{-e(H')} \E (\xi^{(1)})^t (\E \xi^{(1)})^{s-1} \prod_{j=1}^t \E (\xi^{(2)})^{b_j} (1 + o(1)).
    \end{equation}
    Using Remark~\ref{rmk.remark1}, we have $D_1 \sim Po(\beta^{1/2} \xi^{(1)} \E \xi^{(2)})$ and  $D_2 \sim Po(\beta^{-1/2} \xi^{(2)} \E \xi^{(1)})$, so $\E (D_1)_t = \beta^{t/2} \E (\xi^{(1)}) ^ t (\E \xi^{(2)})^t$ and $\E (D_2)_j = \beta^{-j/2} \E (\xi^{(2)}) ^ j (\E \xi^{(1)})^j$ for $j \le k+1$. Putting these estimates into (\ref{eq.tH}) and simplifying we get the expression on the right of (\ref{eq.silhuette}).
    (\ref{eq.iiH1}) follows.

   \emph{(iv) (random configuration graph) }  For $i = 1, 2$, let
   \[
       \tilde{d}_{i, m} = n_i^{-1} \sum_{v \in \Vn i} (d_{i,v})_m.
   \]

   Recall that $N = \sum_{u \in \Vn 1}^{n_1} d_{1,u}$ is the total number of half-edges in each of the parts. Since $n_1, n_2 \to \infty$ and  by the assumption of the lemma $N n_1^{-1} \to \E D_1$, 
   there exists $\omega_n \to \infty$, such that for all $n$
   \begin{equation}\label{eq.omegan}
       n_1,n_2, N \ge \omega_n; \quad \omega_n \ge k+2.
   \end{equation}
   The probability that $H_n$ contains $H'$ as a subgraph is at most
   \[
       a(H') =  \frac {1} {(N)_{e(H')}} \E \prod_{u \in V(H')} (d_{H_n}(u))_{d_{H'}(u)}.
   \]
   Here the product counts the number of ways to choose particular half-edges forming $H'$.
   Let $(u_1^*, \dots, u_s^*)$ and $(w_1^*, \dots, w_t^*)$ be independent uniformly random tuples of distinct vertices
   from $V_1(H_n)$ and $V_2(H_n)$ respectively.  Using symmetry, 
   (\ref{eq.omegan}) and the assumption of the lemma
   \begin{align*}
       &a(H') =  \frac {1} {(N)_{e(H')}} \E \prod_{i=1}^s (d_{1,u_i^*})_{a_i} \prod_{j=1}^t (d_{2,w_j^*})_{b_j}
       \\ & \le \frac {1} {(N)_{e(H')}} \prod_{i=1}^s \tilde{d}_{1, a_i} \prod_{j=1}^t \tilde{d}_{2, b_j}(1 + o(1))
       \\& = (\E D_1 n_1)^{-e(H')} \prod_{i=1}^s \E (D_1)_{a_i} \prod_{j=1}^t \E (D_2)_{b_j} (1+o(1)).
   \end{align*}
   Again, if $H'$ has a cycle then (\ref{eq.iiH2}) follows. 
   Otherwise,
   if $H'$ is a tree, then since $\E D_1 n_1 = \E D_2 n_2 (1+o(1))$ we have $\E D_1 =  \beta \E D_2$ and 
   \[
       (\E D_1)^{-e(H')} \prod_{i=1}^s \E (D_1)_{a_i}  =  \E (D_1)_t  \frac { (\E D_1) ^{s-1}} { (\E D_1)^{s+t-1}} = \beta^{-t} \E (D_1)_t (\E D_2)^{-t}.
   \]
  By comparing $a(H')$ with (\ref{eq.tH}) we see that (\ref{eq.iiH1}) holds. 
\end{proofof}

\bigskip

\noindent
{\bf Acknowledgement} \hspace{.1in} I would like to thank the referees for their helpful remarks, in particular for pointing out the important connections of the definitions used in the paper to the classical theory on the convergence of measures in complete separable metric spaces.

\newpage

\appendix

\section{Proof of Theorem~\ref{thm.riglocal}}
\label{sec.proofs}

In this section we prove Theorem~\ref{thm.riglocal}.
We need different proofs for each of the graph models. The common part in these proofs is that the local weak limit for a sequence of random intersection graphs $\{G_n\}$ that we study is determined by the limit for the sequence of underlying bipartite graphs $\{H_n\}$ and the coupling with a Galton-Watson branching process in each case.

Extending the definition of $p_r(G,H)$ given in Section~\ref{sec.intro}, in the case when $H \in \mathcal{G}_*$ and $G$
is a random rooted graph in $\mathcal{G}_*$, define
$p_r(G, H) = \ii_{B_r(G, \rr(G)) \cong H}$. 
If $G$ is a random rooted or unrooted graph, we will write $\bp_r(G,H) = \E p_r(G,H)$.
Thus $\bp_r(G,H)$ is the probability that an $r$-ball rooted at a random vertex is isomorphic to $H$;
if $G$ is unrooted, then the ball is centered at a uniformly random vertex, while if $G$ is rooted,
then the ball is centered at $\rr(G)$.

For the proofs we need to generalize the notation $p_r, \bp_r$. 
Let $G$ be a random graph with at least $k$ vertices. Let $H_1, \dots, H_k$ be rooted connected graphs. 
We write
\begin{align}\label{eq.eqtrivial}
    &p_r(G, H_1, \dots, H_k) = \E (\ii_{B_r(G, u_1^*) \cong H_1, \dots,  B_r(G, u_k^*) \cong H_k} | G), 
\\  &\bp_r(G, H_1, \dots, H_k) = \E p_r(G, H_1, \dots, H_k).\nonumber
\end{align}
where $(u_1^*, \dots, u_k^*)$ is a uniformly random tuple of $k$ distinct vertices from $V(G)$ (given $V(G)$).

When $G = (V_1, V_2, E)$ is bipartite with $|V_1|, |V_2| \ge k$, 
for $i \in \{1,2\}$ define $p_r^{(i)}$ and $\bp_r^{(i)}$ similarly as $p_r, \bp_r$, but taking a uniformly random
tuple $(u_1^*, \dots, u_k^*)$ of distinct vertices only from the part $V_i$.

Let $\{G_n\}$, $D_1$, $D_2$, $\ct$ be as in Theorem~\ref{thm.riglocal}. Let $H_n$ be the bipartite graph corresponding to $G_n$.
Since $B_r(G_n, v)$ is determined by $B_{2r+1}(H_n,v)$ for $v \in V(G_n)$ and
the subset of graphs in $\mathcal{G}_*$ that have radius at most $2r+1$ is countable,
to prove the theorem it suffices to show in each of the cases that for each positive integer $r$ and each rooted tree $T$
such that $\bp_r(\ct, T) > 0$
\begin{equation}\label{eq.suff}
    p_r^{(1)}(H_n, T) \xrightarrow{p} \bp_r(\ct, T).
\end{equation}
(Use the law of total probability to see that this implies $p_r^{(1)}(H_n, H) \xrightarrow{p} 0$ for any $H$ with $p_r(\ct, H) = 0$.)

We now proceed to prove (\ref{eq.suff}) for each of the four models separately.

\subsection{The active and passive models}

For $i \in \{1,2\}$ define ${\bar i} = 2$ if $i = 1$ and $\bar{i} = 1$ if $i=2$.
Let $(F, S)$ be a pair where
$F$ is a forest of labelled trees, and $S \subseteq V(F)$ a subset of its leaves
(we call a node a leaf if it has degree at most 1). Assume further that $F$ can be represented as a bipartite graph with parts $V_1(F)$ and $V_2(F)$ with $V_i(F) \subset \mathcal{V}^i$. We denote the collection of all such pairs $(F,S)$ by $\ca$.

For a random graph $H_n$ of Theorem~\ref{thm.riglocal} and $(F,S) \in \ca$, we will denote by
$A(H_n, F, S)$ the event that $F$ is a subgraph of $H_n$ and for any $v \in V(F) \setminus  S$ we have $\Gamma_{H_n}(v) = \Gamma_F(v)$.
We will call vertices in $S$ \emph{active} and the vertices in $V(F) \setminus S$ \emph{closed}.

For a discrete random variable $X$, denote by $D(X) = \{x: \pr(X = x) > 0\}$. 
Recall that $X_z = d_{H_n}(z)$ is the degree of $z$ in $H_n$.

\begin{lemma}\label{prop.nocorrelationactive}
    Let $\{G_n\}, \{H_n\}, D_1$ be 
    as in Theorem~\ref{thm.riglocal}(i). 
    Then $H_n$ satisfies (\ref{eq.cond1.2})
    with $D_2 \sim Po(\beta^{-1} D_1)$. 
    
    Furthermore, let $i \in \{1,2\}$ and let $k$ be a non-negative integer.
    Let $(F, S) \in \ca$ and let $z \in S$.
    Call $(F,S)$ feasible if for
    all $u \in (V(F) \cap \cv^1) \setminus S$ we have $d_F(u) \in D(D_1)$. Let $A = A(H_n, F, S)$.
    
    Assume $(F,S)$ is feasible and $z \in S$ is of type $i$ ($z \in \cv^i$). 
    Then for all $n$ large enough $\pr(A) > 0$, 
\begin{align} \label{eq.cond2}
    \lim \pr(X_z = k | A) = 
    \begin{cases}
        \pr(D_i^* = k), \quad \mbox{if }d_F(z) = 1,
        \\ \pr(D_i = k), \quad \mbox{if } d_F(z) = 0,
    \end{cases}
\end{align}     
    and
\begin{align}\label{eq.cond3}
    \lim \pr(\exists u\in V(F): zu \in H_n, zu \not \in F | A) = 0. 
\end{align}
    
\end{lemma}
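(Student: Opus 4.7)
The plan exploits the product structure of the active random intersection graph: since the pairs $(X_v^{(1)}, S_v^{(1)})$ are independent across $v \in V^1$, the event $A = A(H_n, F, S)$ factorises as $A = \bigcap_{v \in V^1} A_v$, where $A_v$ depends only on $(X_v^{(1)}, S_v^{(1)})$. Writing $U_1 = V^1 \cap S$, $U_1' = V^1 \cap V(F) \setminus S$ and $W_1' = V^2 \cap V(F) \setminus S$, one has $A_v = \{S_v^{(1)} = \Gamma_F(v)\}$ for $v \in U_1'$, $A_v = \{\Gamma_F(v) \subseteq S_v^{(1)},\ S_v^{(1)} \cap (W_1' \setminus \Gamma_F(v)) = \emptyset\}$ for $v \in U_1$, and $A_v = \{S_v^{(1)} \cap W_1' = \emptyset\}$ for $v \in V^1 \setminus V(F)$. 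Conditional on $A$, the pairs $(X_v^{(1)}, S_v^{(1)})$ remain independent across $v$.

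The statement $X^{(2)} \xrightarrow{D} D_2 \sim \Po(\beta^{-1} \E D_1)$ with $\E X^{(2)} \to \E D_2$ is then immediate. By symmetry $X^{(2)}$ has the law of $\sum_{v \in V^1} \ii_{w_0 \in S_v^{(1)}}$ for any fixed $w_0 \in V^2$; these indicators are i.i.d.\ Bernoulli with success probability $p_n = \E X^{(1)}/n_2$; since $n_1 p_n \to \beta^{-1}\E D_1$, the Binomial--Poisson limit theorem gives both conclusions.

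For (\ref{eq.cond2}) I split by the type of $z$. If $z \in U_1$, the factorisation reduces $\pr(X_z = k \mid A)$ to $\pr(X_z^{(1)} = k \mid A_z)$; a direct count gives, for $k \ge d_F(z)$ and $c_z := |W_1' \setminus \Gamma_F(z)|$,
\[
    \pr(A_z \mid X_z^{(1)} = k) = \frac{\binom{n_2 - d_F(z) - c_z}{k - d_F(z)}}{\binom{n_2}{k}} = \frac{(k)_{d_F(z)}}{n_2^{d_F(z)}}(1+o(1)).
\]
Combining with $X^{(1)} \xrightarrow{D} D_1$ and $\E X^{(1)} \to \E D_1$ (so $\{X^{(1)}\}$ is uniformly integrable by Lemma~\ref{lem.uint}), this yields $\pr(D_1 = k)$ when $d_F(z) = 0$ and $k\pr(D_1=k)/\E D_1 = \pr(D_1^* = k)$ when $d_F(z) = 1$; positivity of $\pr(A)$ for large $n$ follows from feasibility. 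If instead $z \in V^2 \cap S$, I write $X_z = \sum_{v \in V(F)\cap V^1}\ii_{z \in S_v^{(1)}} + \sum_{v \in V^1 \setminus V(F)}\ii_{z \in S_v^{(1)}}$. The first (bounded) sum equals $\ii_{zv \in F}$ deterministically for each $v \in U_1'$, equals $1$ deterministically for each $v \in U_1$ with $zv \in F$ (by $A_v$), and equals $0$ in the limit for each other $v \in U_1$ (see the last paragraph), so it tends to $d_F(z)$. The second sum is a triangular array of conditionally independent Bernoullis with success probability tending to $\E D_1/n_2$ and $\sim n_1$ terms, so the Poisson limit theorem gives convergence in distribution to $\Po(\beta^{-1}\E D_1)$. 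Altogether $X_z \mid A \xrightarrow{D} d_F(z) + \Po(\beta^{-1}\E D_1)$, which is $D_2$ when $d_F(z)=0$ and $1+\Po(\beta^{-1}\E D_1) = D_2^*$ when $d_F(z) = 1$.

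Claim (\ref{eq.cond3}) is proved by the same tools: the possible extra $F$-neighbours of $z$ form a bounded set; each closed candidate is excluded by $A$ deterministically; each active candidate has conditional probability $o(1)$; a union bound finishes. The main obstacle is exactly this last step: for an active $v$ with $d_F(v) \ge 1$ and $z \notin \Gamma_F(v)$, the naive estimate $\pr(z \in S_v^{(1)} \mid A_v) \le \E(X_v^{(1)} \mid A_v)/n_2$ is not usable, because the conditional law of $X_v^{(1)} \mid A_v$ is essentially size-biased and may have infinite mean under the single hypothesis $\E D_1 < \infty$. I would resolve this by truncation: the count-and-ratio argument used above shows that $\pr(X_v^{(1)} = k \mid A_v)$ converges pointwise to a proper probability mass function on $\{0,1,\dots\}$, so the conditional law is tight; given $\varepsilon > 0$ pick $\omega$ with $\pr(X_v^{(1)} > \omega \mid A_v) < \varepsilon$ for all large $n$, and bound the indicator by $\varepsilon + \omega/n_2 \to \varepsilon$, which is arbitrarily small.
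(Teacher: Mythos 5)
Your proposal is correct and, at its core, exercises the same probabilistic ingredients as the paper (Binomial--Poisson / Le Cam approximation, uniform integrability of $X^{(1)}$, Lemma~\ref{lem.cpoissonbias}), but the organising device is genuinely different and, to my mind, cleaner. You frame the whole proof around the factorisation $A=\bigcap_{v\in V^1}A_v$ into events depending on disjoint independent blocks, which makes the conditional independence of $\{(X^{(1)}_v,S^{(1)}_v)\}_v$ given $A$ immediate, and from which the positivity $\pr(A)>0$ follows in one line ($\pr(A)=\prod_v\pr(A_v)$, each factor positive for large $n$ under feasibility, with the infinite product over $v\notin V(F)$ bounded below by $e^{-O(1)}$ via uniform integrability). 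By contrast, the paper never names this factorisation explicitly — it uses independence of $\{S_v\}$ only locally — and therefore has to establish $\pr(A)>0$ by a somewhat delicate induction on $|V(F)|+|L\setminus S|$, which your route eliminates entirely. The other visible divergence is in controlling the events like $\{z\in S^{(1)}_v\}$ for an active $v$ with $d_F(v)=1$: the paper's estimate is the explicit two-edge bound (\ref{eq.twoedges}), i.e.\ $\E(X^{(1)})^2=o(n_2)$, derived from uniform integrability; you instead observe that the conditional law of $X_v^{(1)}\mid A_v$ converges weakly to a proper limit (essentially $D_1^*$), hence is tight, and truncate. Both are valid; the paper's is a single closed-form estimate, yours is a more robust ``soft'' argument that correctly anticipates the pitfall you identify (the conditional mean $\E(X^{(1)}_v\mid A_v)\approx\E(X^{(1)})^2/\E X^{(1)}$ may diverge, so the naive Markov bound cannot be used directly). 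Two small things worth spelling out in a final write-up: the line ``positivity of $\pr(A)$ follows from feasibility'' deserves the one-sentence computation sketched above, and you should state explicitly that the first (bounded) sum and second (Poissonising) sum in the $z\in V^2$ case are conditionally independent given $A$ because they involve disjoint sets of $v$, so that the limit law of $X_z\mid A$ is really the convolution $d_F(z)+\Po(\beta^{-1}\E D_1)$.
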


\begin{proof} 
    Fix a feasible $(F, S)$. 
    Let us first show (\ref{eq.cond1.2}). We use the notation of Section~\ref{sec.intro}. 
    By symmetry $\Xn 2$ has the same distribution as 
    the degree in $H_n$ of any fixed vertex $w$ of $\Vn 2$. By independence of $S_v$, $v \in \Vn 1$, 
    $\Xn 2$ has distribution $Bin(n_1, p)$ where $p = p(n) = \pr(w \in S_{v_1}) =  \frac {\E \Xn 1} {n_2}$ and $v_1$ is a fixed vertex of $\cv^1$. So
    $\E \Xn 2 = \frac {n_1 \E \Xn 1} {n_2}$ and
    $\delta_n = \frac {n_1 \E \Xn 1} {n_2} - \frac {\E D_1} \beta \to 0$ as $n \to \infty$.
    Let $D_2$ and $Y_n$ have Poisson distribution with parameters $\frac {\E D_1} \beta$ and
    $n_1 p$ respectively.
    By Le Cam's theorem and by the properties of Poisson random variables (see, e.g., \cite{bloznelis2011}), we have
    \begin{align}
        &d_{TV}(\Xn 2, D_2) \le d_{TV}(\Xn 2, Y_n ) + d_{TV} (Y_n, D_2) \label{eq.lecam}
       \\ & \le  2 n_1 p^2 + \pr(Po(|\delta_n|) > 0) \to 0. \nonumber
    \end{align}
    Therefore (\ref{eq.cond1.2}) holds with $D_2 \sim Po(\beta^{-1} \E D_1)$. 

    Let $L$ be the set of leaves in $F$.
    We will show the rest of the claim by induction on $|V(F)| + |L \setminus S|$.
    First suppose $F$ is an empty graph and $S=\emptyset$. Then
    trivially $\pr(A(H_n, F,S)) > 0$ and (\ref{eq.cond2}), (\ref{eq.cond3}) hold. Now suppose $l \ge 1$ and we have
    proved the claim for all feasible $(F',S') \in \ca$ such  that $|V(F')| + |L \setminus S| < l$. Let $(F,S) \in \ca$
    be feasible and such that $|V(F)| + |L \setminus S| = l$. 
    
    We begin by proving that
    \begin{equation}\label{eq.Aposit}
        \pr(A(H_n, F, S)) > 0 \quad \mbox{for all $n$ large enough.}
    \end{equation}   
    Indeed, if $F$ has a closed leaf $v$, (\ref{eq.Aposit}) follows by induction and  (\ref{eq.cond2}), (\ref{eq.cond3}) 
    applied to $(F, S \cup \{v\})$. 
    Else, if $F$ has an active vertex $v \in S$ of
    degree zero, then it is trivial. 
    Otherwise all leaves are active.
    Let $v$ be the neighbour of a leaf at a maximum distance from the root. The set $L_{v}$ of the children of $v$ (vertices in $\Gamma_F(v)$ which are not on the path from $v$ to the root) satisfies $L_v \subseteq S$.
    By induction $A(H_n, F - L_v, (S \setminus L_v) \cup \{v\})$ holds with a positive probability.
    Also using induction, (\ref{eq.cond2}), (\ref{eq.cond3}) imply that $v$ has $|L_v|$ neighbours in $G - V(F-L_v)$ with
    a positive probability. Symmetry implies (\ref{eq.Aposit}).  

    In the rest of the proof we assume $n$ is large enough that (\ref{eq.Aposit}) holds and show (\ref{eq.cond2}) and (\ref{eq.cond3}) for $(F,S)$ and an arbitrary $z \in S$.
    Recall that $F$ is a bipartite graph with parts $(V_1(F), V_2(F))$, $V_i(F) \subseteq \cv^i$, $i=1,2$. 
    We assume that $n$ is large enough that $v_i(F)  = |V_i(F)| < n_i$, $i=1,2$. 
    For any $u \in \cv^1$ and $w_1, w_2 \in \cv^2$ we have
    \begin{align}\label{eq.twoedges}
       \pr(u w_1, u w_2 \in H_n) \le \frac {\E \left(\Xn 1\right)^2} {n_2^2} \le \frac {\E \Xn 1 \ii_{\Xn 1 > \sqrt{n_2}}} {n_2} + \frac {\sqrt{n_2} \E \Xn 1} {n_2^2} = o(n_2^{-1}).
    \end{align}
    Here the last bound follows since 
    $\Xn 1$ is uniformly integrable, see e.g., \cite{bloznelis2008} or Lemma~\ref{lem.uint} .

    First consider the case $z \in \mathcal{V}^2$.
    Recall that $S_z= \Gamma_{H_n}(z)$. 
    We may write $X_z  = d_F(z) + X_z' + X_z''$ where $X_z' = |S_z \setminus V(F)|$ 
    and $X_z'' = | S_z \cap (V(F) \setminus \Gamma_F(z))|$.

    Let $\{w_1, \dots, w_r\} =  V_2(F) \setminus S$ be
    the set of closed type 2 vertices in $F$.
    For $u \in \Vn 1 \setminus V(F)$ denote by 
    $B_u = B_u(n)$ the event that  $u w_1, \dots, u w_r \not \in H_n$.
     Write $A = A(H_n, F,S)$.
    By independence
    of $\{S_v, v \in \Vn 1\}$
    \begin{equation}\label{eq.ancond}
        a_n := \pr(uz \in H_n | A) = \pr(uz | B_u)
    \end{equation}
    and note that $a_n$ does not depend on $u \in \Vn 1 \setminus V(F)$.
    Furthermore, we can easily verify that for any distinct $u_1, \dots, u_t \in \Vn 1 \setminus V(F)$
    \[
        \pr(u_1z, \dots, u_t z \in H_n | A) =  \prod_{j=1}^t \pr(z \in S_{u_j} | B_{u_j}) = a_n^t,
    \]
    i.e., the events $uz \in H_n$ for $u \in \Vn 1 \setminus V(F)$ are conditionally independent given $A$.
    Therefore conditionally on $A$, the random variable $X_z'$ is distributed as $X_A'$,
    where $X_A' \sim Bin(n_1 - v_1(F), a_n)$.

Let us estimate $a_n$. For any  $u \in \Vn 1 \setminus V(F)$ 
we have using (\ref{eq.twoedges}) and the union bound
    \begin{align}
        &\pr(B_u) \ge 1 - r\pr(uw_1 \in H_n) \ge 1 - \frac {r \E \Xn 1} {n_2} = 1 - o(1); \nonumber 
     \\  &\pr(uz \in H_n, B_u) \le  \pr(uz \in H_n) = \frac {\E \Xn 1} {n_2}; \nonumber
     \\ &\pr(uz \in H_n, B_u) \ge \pr(uz \in H_n) - r \pr(uz, uw_1 \in H_n) = \frac {\E \Xn 1} {n_2} \left( 1 - o(1) \right).\label{eq.onekeyandrubbish}
    \end{align}
    So by (\ref{eq.cond1.1}) and (\ref{eq.ancond})
    \[
        a_n =  \frac {\E \Xn 1} {n_2} \left( 1 - o(1) \right) =  \frac {\E D_1} {n_2} \left( 1 - o(1) \right).
    \]
    Suppose $Z_n \sim Bin(n_1,a_n)$. Similarly as in (\ref{eq.lecam})
    \begin{equation}\label{eq.dtv1}
        d_{TV}(X_A', D_2) \le d_{TV} (X_A', Z_n) + d_{TV}(Z_n, D_2) \le v_1(F) a_n + o(1) \to 0.
    \end{equation}
    Now consider $X_z''$. 
    Suppose $u \in (S \cap \Vn 1) \setminus \Gamma_F(z)$. If $d_F(u) = 1$, we may assume without loss of generality that
    the neighbour of $u$ in $F$ is $u'$. 
    Write $W' =  \{w_1, \dots, w_r\} \setminus \{u'\}$.
    Using independence, (\ref{eq.twoedges}) and (\ref{eq.onekeyandrubbish})
    \begin{align*}
        &\pr(uz \in H_n | A) = \pr(uz \in H_n | u u' \in H_n, \cap_{w \in W'} u w \not \in H_n) 
        \le \frac {\pr(u z, u u' \in H_n)} {\pr(\cap_{w \in W'} u w \not \in H_n)} = o(1) .
    \end{align*}
    Similarly, if $d_F(u) = 0$,
    \begin{align*}
        &\pr(uz \in H_n | A) \le \frac {\pr(uz \in H_n)} {\pr(uw_1, \dots, uw_r \not \in H_n)} = \frac {\E \Xn 1} {n_2} (1 + o(1)) = o(1)
    \end{align*}
    Thus by the union bound $\pr(X_z'' > 0 | A)  = o(1)$: this yields (\ref{eq.cond3}) when $z$ is of type 2. Using (\ref{eq.dtv1}) we
    conclude for $k \ge 1$ and $z \in S \cap \cv^2$
    \[
        \pr(X_z = k | A)  \to  \begin{cases} 
            &\pr(Po(\E D_1 /\beta) = k), \quad \mbox{if } d_F(z) = 0; \\
            &\pr(Po(\E D_1 /\beta) + 1 = k), \quad \mbox{if } d_F(z) = 1.
         \end{cases}
    \]
    Since $\Po(\lambda)^* \sim 1 + \Po(\lambda)$ (see Lemma~\ref{lem.cpoissonbias}), (\ref{eq.cond2}) follows when 
    $z \in S \cap \cv^2$.
 
    \bigskip

    Now suppose $z \in \mathcal{V}^1$. We will prove the claim only in the case
    $d_F(z) = 1$. The case $d_F(z) = 0$ is similar, but simpler.
    Let $z'$ be the only neighbour of $z$ in $F$, and let
    $\{w_1, \dots, w_r\} =  V_2(F) \setminus (S \cup \{z'\})$. Write 
    $B = B(n)$ for the event that  $zw_1, \dots, zw_r \not \in H_n$. We have
    \[
        \pr(X_z = k | A) = \pr(X_z = k | zz' \in H_n, B)
    \]
    and using symmetry
    \begin{align*}
        &\pr(X_z = k, zz' \in H_n, B) = P(zz' \in H_n| B, X_z = k) \pr(X_z = k, B)
        \\      &= \frac k {n_2-r} (\pr(D_1 =k) + o(1)).
    \end{align*}
    The estimate $\pr(X_z = k, B) = \pr(D_1 =k) + o(1)$ follows since by the union bound and (\ref{eq.twoedges}), 
    $0 \le \pr(X_z=k) - \pr(X_z = k, B) \le r \pr(w_1 \in S_z) = O(n_2^{-1})$
    and
$\pr(X_z = k) \to \pr(D_1 = k)$ by (\ref{eq.cond1.1}). Therefore using also (\ref{eq.onekeyandrubbish})
    \[
        \pr(X_z = k | A) =  \frac {\pr(X_z = k, zz' \in H_n, B)}{\pr(zz' \in H_n, B)} \to \frac {k \pr(D_1 = k)} {\E D_1} = \pr(D_1^* = k). 
    \]
    Finally, we show (\ref{eq.cond3}), also only the case $d_F(z) = 1$.
    This is trivial if $s = |S \cap \cv^2| = 0$.
    If $s \ge 1$, fixing any $w_0 \in \mathcal{V}^2 \setminus \{z'\}$ and using
    (\ref{eq.twoedges}) and (\ref{eq.onekeyandrubbish})
    \begin{align*}
        &\pr( |S_z \cap V(F)| \ge 2 | A) \le \sum_{w \in (S \cap \Vn 2) \setminus \{z'\}} \frac {\pr(w z, z z' \in H_n, B)} {\pr(z z' \in H_n, B)}
       \\ &\le \frac {s n_2 \pr(z w_0, zz' \in H_n)} {\E \Xn 2} (1 + o(1)) = o(1).
    \end{align*}
    \end{proof}

The next lemma follows using a lengthy but essentially trivial argument.

\begin{lemma}\label{lem.trivial}
    Let $\{H_n\}$ be as in Theorem~\ref{thm.riglocal}(i). 
    Fix $i \in \{1,2\}$, a non-negative integer $r$ and let $C_1$, \dots, $C_k$ be 
    rooted trees 
    of radius at most $r$.
    Let $\ct^{(i)} = \ct(D_i, D_{\bar i})$ where $D_1, D_2$ are given in (\ref{eq.cond1.1}), (\ref{eq.cond1.2}). 

    Suppose that $\prod_{j=1}^k \bp_r(\ct^{(i)}, C_j) > 0$.
    Then
    \begin{align}\label{eq.eqeq}
        \bp_r^{(i)}(H_n, C_1, \dots, C_k) \to \prod_{j=1}^k \bp_r(\ct^{(i)}, C_j)
    \end{align}
\end{lemma}
\begin{proof}
The lemma is trivial for $r=0$, so assume $r \ge 1$. 
We couple a BFS search process with the construction of the first $r$ generations of
    $k$ independent copies of $\ct^{(i)}$ in the natural way as follows.
 Let $F' = C_1' \cup \dots \cup C_k'$ be an arbitrary embedding of $F = C_1 \cup \dots \cup C_k$ into plane. 
Let $v_1, v_2, \dots, v_t$ be the vertices of $F'$ in the order they are visited by the
breadth-first search that terminates at level $r-1$ and let $d_j = d_{F'}(v_j)$. (The BFS starts with the root $v_1$ of $C_1'$ and
visits each vertex in $B_{r-1}(v_1)$. After exploring $C_j'$ with $j < k$
it jumps to the root of $C_{j+1}'$. The degrees of each tree are listed in the top-to-bottom, left-to-right order,
where the root of a tree is at the top, and the children of each node are ordered from left to right).

To this exploration of a fixed forest associate a ``truncated'' BFS exploration of
the graph $H_n$.

At step $1$ we choose as a root of $G_1$ a uniformly random vertex $u_1^*$ from $\Vn i$
and reveal its neighbours $S_{u_1^*}$ in $H_n$. Define $V^*_1 = \{u_1^*\}$ and $\phi(v_1) = u_1^*$. 
We say that step 1 \emph{succeeds} in $H_n$ if $X_{u_1^*}^1 = d_1$. 

Now let $j \in \{2,3,\dots,t\}$, and assume the steps $1,\dots,j-1$ succeeded. 
At step $j$ we do the following.
If $v_j$ is not a root node in $F'$ then let $\tilde{v}$ be 
the parent of $v_j$ in $F'$. Let $u = \phi(\tilde{v})$.
Choose $x$ uniformly at random
from $S_u \setminus V_{j-1}^*$.
Reveal  $d_j' := X_x$, the degree of $x$ in $H_n$ and
the remaining $X_x - 1$ elements of the set $S(x)=\Gamma_{H_n}(x)$ and say
that step $i$ succeeds if $d_j' = d_j$
and no vertex in $S(x) \setminus \{u\}$ was revealed before step $j$.
Finally set $V_j^* = V^*_{j-1} \cup \{x\}$, $\phi(v_j) = x$.

If $v_j$ is a root node, then we let $x$ be a uniformly random
vertex from $\Vn i \setminus R_{j-1}$, where $R_{j-1} = \{u \in V(H_n): p \le j-1, \phi(v_p) = u, v_p \in R\}$ and
where $R$ is the set of the root nodes of $F'$.
We reveal $S(x)$, set $\phi(v_j)=x$ and define $V^*_j =  V_{j-1}^* \cup \{x\}$.
We say that step $j$ succeeds if $d_j':= X_x$ is equal to $d_j$ and no vertex in $S(x) \cup \{x\}$ 
was revealed in previous steps.

Finally, in the third process we simply generate the first $r$ generations
    of $k$ independent copies of the branching process $\ct^{(i)}$.
We query the random number of children for each node in the same order as the BFS search process on $F$. 
If the degree $d_j''$ of a particle queried at step $j$ is equal to $d_j$, we say that the step $j$ in the branching process succeeds.


Let $S_j$ be the event that step $j$ succeeds in $H_n$. Let $U^*_{j-1}$ be the set of vertices revealed until step $j$, i.e.,
$U_{j-1}^* = V_{j-1}^* \cup \{u: \exists v \in V_{j-1}^*: \, uv \in H_n\}$. 

Let $F_{j-1}'$ be the subgraph of $F'$ induced on vertices $\{v_1, \dots, v_{j-1}\}$ and their neighbours in $F'$.
Let $F_{j-1}$ be an arbitrary fixed bipartite graph
with parts $V_1(F_{j-1}) \subset \mathcal{V}^i$ and $V_2(F_{j-1}) \subset \mathcal{V}^{\bar i}$ such that $F_{j-1} \sim F'_{j-1}$. Recall that two bipartite graphs satisfy $H_1 \sim H_2$ if and only if there is a part-preserving isomorphism between $H_1$ and $H_2$; fix such an isomorphism $\sigma_j$ from $F_{j-1}$ to $F'_{j-1}$.
Also define $T_{j-1} := V(F'_{j-1}) \setminus \{\sigma_j(v): v \in \{v_1, \dots, v_{j-1}\} \}$

First suppose $v_j$ is a root node of $F'$. Since $v(F')=|V(F')|$ is constant, using
an inequality $|\pr(B) - \pr(B|C)| \le \pr({\bar C})$ valid for arbitrary events $B, C$ with $\pr(C) > 0$
\begin{align*}
    &\left | \pr(S_j | S_1, \dots, S_{j-1}) - \pr(S_j | \phi(v_j) \not \in U_{j-1}^*, S_1, \dots, S_{j-1}) \right|
    \\ &\le \pr(\phi(v_j) \in U_{j-1}^* | S_1, \dots, S_{j-1}) \le \frac {v(F')} {n_1} = o(1).
\end{align*}
Fix $z \in \mathcal{V}^i \setminus V(F_{j-1})$, and let $F_{j-1}^+$ be the graph
consisting of $F_{j-1}$ and an isolated vertex $z$.
Let $C_j$ be the event that $d_{H_n}(z) = d_j$ and there is no edge from $z$ to $V(F_{j-1})$ in $H_n$.  

Let $A_j = A(H_n, F_{j-1}^+, T_{j-1} \cup \{z\})$. 
Conditioning on $\phi(v_1), \dots, \phi(v_j)$, using symmetry
and Lemma~\ref{prop.nocorrelationactive} we get
\begin{align*}
    \pr(S_j | \phi(v_j) \not \in U_{j-1}^*, S_1, \dots, S_{j-1}) = \pr(C_j | A_j) \to \pr(D_i = d_j).
\end{align*}
Thus $a_j = \pr(S_j | S_1, \dots, S_{j-1}) \to \pr(D_i = d_j)$.

Now suppose $v_j$ is a non-root node of $F'$.
Let $A_j = A(H_n, F_{j-1}, T_{j-1})$ and let $z \in V(F_{j-1})$ be such that $\sigma_j(z) = v_j$,
and suppose $z \in \mathcal{V}^p$. 
Then using symmetry,  Lemma~\ref{prop.nocorrelationactive} similarly as above
\begin{align*}
    a_j = \pr(S_j | S_1, \dots, S_{j-1}) = \pr(C_j | A_j) \to \pr(D_p^* = d_j).
\end{align*}



Note that $S_1 \cap \dots \cap S_t$ implies that there is an isomorphism
from $B_r(H_n, u_1^*) \cup \dots \cup B_r(H_n, u_k^*)$ 
to $C_1' \cup \dots C_k'$ mapping the root of $B_r(H_n, u_j^*)$ to the root
of $C_j'$ and preserving the order of the plane embedding. This follows by our coupling
of the explorations and the
definition of the events $S_j$ and $A_j$: all edges incident to 
$\{\phi(v_1), \dots, \phi(v_t)\}$ in $H_n$
are specified by $\cap S_j$; for $j=t$ we have that all
vertices of $T_{t-1} \setminus \{\sigma_t(v_j)\}$ are at distance $r$ from the root, but $H_n$ 
is bipartite, so there cannot be an edge in $H_n$ between them.

Since the number of vertices in $F$ is constant, we get
$\pr(S_1, \dots, S_t) \to \prod_j a_j$, which is exactly the probability for all steps in the branching process to succeed.
Since this holds for each embedding $F'$ of $F$, (\ref{eq.eqeq}) follows.

\end{proof}

\bigskip

%


\bigskip

    \begin{proofof} {Theorem~\ref{thm.riglocal} (i), (ii)}
        \label{pf.riglocal.i-ii} 
        We first show (i). 
        Let $\ct = \ct(D_1, D_2)$.
        Lemma~\ref{lem.trivial} implies that for
        any set of rooted trees  $C_1, \dots, C_k$
        \begin{equation}\label{eq.bpg}
            \E p_r(G_n, C_1,\dots,C_k) = \bp_r(G_n, C_1, \dots, C_k) \to \prod_{j=1}^k \bp_r(G_{\ct}, C_j).
        \end{equation}
        For $v \in \Vn 1$ let $\ii_v$ be the indicator of the event $B_r(G_n, v) \cong C$. Denote $p = p(n) = \bp_r(G_n, C_1)$,
        $q = q(n) = \bp_r(G_n, C_1, C_1)$. By (\ref{eq.bpg}), $q - p^2 \to 0$, 
        \begin{align*}
            &Var(p_r(G_n, C_1)) = Var\left( n_1^{-1} {\sum_{v\in \Vn 1} \ii_v}\right) = n_1^{-2} \sum_{u,v \in \Vn 1} Cov(\ii_u, \ii_v)
            \\ &= n_1^{-1} (p - p^2) + \frac {n_1 (n_1-1)} {n_1^2} (q - p^2) \to 0. 
        \end{align*}
        Since $\E p_r(G_n, C_1) \to \bp_r(G_\ct, C_1)$, we have $p_r(G_n, C_1) \xrightarrow{p} \bp_r(G_\ct, C_1)$
        by Chebyshev's inequality. Now (i) follows by (\ref{eq.suff})..

        The proof of (ii) follows analogously, but using $i=2$ in Lemma~\ref{lem.trivial}. 
        Indeed, if $H_n = (\Vn 1, \Vn 2, F)$ and $G(H_n)$ is a passive random intersection graph with $n_i = |\Vn i|$, then
        $G(H_n')$ with $H_n' = (\Vn 2, \Vn 1, F)$ is an active random intersection graph with parts of size $n_1' = n_2$ and $n_2' = n_1$.
        In particular, in this case $\beta' = \lim \frac {n_2'} {n_1'} = \beta^{-1}$ and by Lemma~\ref{prop.nocorrelationactive} (\ref{eq.cond1.1}) holds with $D_1 \sim Po(D_2/\beta') \sim Po(\beta D_2)$.
    \end{proofof}

\subsection{The inhomogeneous model}
\label{sec.inhomog}

We will reduce Theorem~\ref{thm.riglocal}(iii) to the general model of Bollob\'as, Janson and Riordan \cite{bollobasjansonriordan2007, bollobasjansonriordan2011}. 
Results of \cite{bollobasjansonriordan2007} have also been applied by Bloznelis \cite{bloznelis2010} to study the largest connected component in
an inhomogeneous random intersection graph. 




Let $\mathbb{S} = (S, \mu)$ be a probability space and let $\kappa = \kappa_{K_2}: S^2 \to [0,\infty)$ be a measurable function, or a \emph{kernel}. The random
inhomogeneous graph $G(n,\kappa)$ on the vertex set $\{v_1, \dots, v_n\}$ is obtained by sampling independently at random 
points $x_1, x_2, \dots, x_n$ from the distribution $\mu$. Then for each ordered\footnote{For our simple application it would be more
    natural to add each unordered pair with probability $2p_{i,j}$. The results are equivalent, but for the sake of consistency we stick to the definition of \cite{bollobasjansonriordan2011}.} pair $(v_i, v_j)$ we add an edge $v_i v_j$ to $G(n, \kappa)$ with probability $p_{i,j} = \min (\frac {\kappa(x_i, x_j)} n, 1)$, independently, merging
    any repetitive edges. For $i \in \{1, \dots, n\}$ we call $x_{v_i} = x_i$ the \emph{type}\footnote{In this section we use the notion \emph{type} to refer to an element from $S$, not to the part of a vertex in a bipartite graph.} of $v_i$.

Assuming that $\kappa$ and is integrable, the associated Galton-Watson branching process $\mathcal{X}_\kappa$ is defined as follows \cite{bollobasjansonriordan2011}.
There is a single particle  in generation 0. The type of this particle is chosen from $S$ according to the distribution $\mu$. The children of each particle $P$ of type $x$ have types which are the points of
a Poisson process with intensity $2 \kappa(x, y) d \mu(y)$ ($f$ defined by $f(x) = \int \kappa(x,y) d\mu(y)$ is finite everywhere except a set of measure 0 in $S$ which we may ignore). The children for each particle in the same generation
are generated independently, and all the children of generation $i$ particles make up generation $i+1$.
We use the same notation $\mathcal{X}_\kappa$ to denote the corresponding random possibly infinite rooted tree.

Although the work \cite{bollobasjansonriordan2007} probably contains what is necessary for our proof,
it is simpler in our case to use the results of \cite{bollobasjansonriordan2011}. 
These results also hold in a more general framework where kernels for arbitrary small subgraphs (for example, cliques) are allowed. However, a single kernel function (corresponding to the subgraph $K_2$) is sufficient to model the underlying bipartite graph $H_n$ of the inhomogeneous random intersection graph.

\begin{lemma}{(Theorem 9.1 of \cite{bollobasjansonriordan2011})} \label{lem.bjrlimit}
    Suppose $\kappa$ is integrable on $S^2$. Then for any non-negative integer $r$ and any rooted connected $H$ 
    \[
        p_r(G(n, \kappa), H) \xrightarrow{p} \bp_r(\mathcal{X}_\kappa, H).
    \]
\end{lemma}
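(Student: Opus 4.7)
The plan is to prove Lemma~\ref{lem.bjrlimit} by the standard first-moment/second-moment route: first establish convergence of the expectation $\E p_r(G(n,\kappa), H) \to \bp_r(\mathcal{X}_\kappa, H)$ via a coupling between a breadth-first exploration in $G(n,\kappa)$ and the branching process $\mathcal{X}_\kappa$, then upgrade to convergence in probability by showing the variance tends to zero.

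First I would compute $\E p_r(G(n,\kappa), H)$. By exchangeability this equals $\pr(B_r(G(n,\kappa), v_1) \cong H)$. To evaluate this I would run BFS of depth $r$ from $v_1$, revealing vertices in $V(G(n,\kappa))\setminus\{v_1\}$ one generation at a time. Conditional on the type $x$ of a vertex $u$ already discovered and on the set $U$ of vertices revealed so far, the number of not-yet-revealed neighbours of $u$ is binomial with parameters $n - |U|$ and $\E_{y\sim\mu}[\min(\kappa(x,y)/n, 1)]$, and the types of these neighbours are i.i.d.\ draws from a density close to $\kappa(x,\cdot)/\int \kappa(x,y)d\mu(y)$. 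Since $|U|$ stays bounded when we condition on $B_r(G(n,\kappa), v_1)\cong H$ (for fixed finite $H$), Le Cam's theorem (as used e.g.\ in~(\ref{eq.lecam})) lets me replace each binomial by a Poisson with mean $\int\kappa(x,y)d\mu(y)$, matching the offspring distribution of a particle of type $x$ in $\mathcal{X}_\kappa$. Iterating over the $O(|V(H)|)$ steps gives $\E p_r(G(n,\kappa), H) \to \bp_r(\mathcal{X}_\kappa, H)$.

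For the variance, write $p_r(G(n,\kappa), H) = n^{-1}\sum_{i=1}^n \ii_{B_r(G(n,\kappa),v_i)\cong H}$ and compute
\[
\mathrm{Var}(p_r(G(n,\kappa), H)) = \frac{1}{n^2}\sum_{i,j}\mathrm{Cov}(\ii_{B_r(v_i)\cong H},\, \ii_{B_r(v_j)\cong H}).
\]
The diagonal contributes $O(1/n)$. For $i\ne j$, by exchangeability it suffices to show that the joint law $\pr(B_r(v_1)\cong H,\, B_r(v_2)\cong H)$ converges to $\bp_r(\mathcal{X}_\kappa, H)^2$. I would extend the BFS coupling above to two simultaneous explorations from $v_1$ and $v_2$; the key observation is that, for a fixed radius $r$ and fixed $H$, the probability that the two explorations ever meet (i.e.\ discover a common vertex, or produce an edge between the two already-revealed sets) is $O(1/n)$, because each edge probability is $O(1/n)$ and the explorations touch only $O(1)$ vertices when the target event occurs. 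Conditionally on no collision, the two explorations are independent and each converges as in the first moment argument, giving asymptotic factorization. Chebyshev's inequality then yields the desired convergence in probability, as in the proof of Theorem~\ref{thm.riglocal}(i).

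The main obstacle is that $\kappa$ is assumed only integrable, not bounded, so the mean offspring $\int\kappa(x,y)d\mu(y)$ can be arbitrarily large for exceptional types $x$, and Le Cam's bound $2 n p^2$ is not uniformly small. I would handle this by truncation: for $M>0$ set $\kappa_M = \kappa\wedge M$, prove the result for each bounded $\kappa_M$ by the argument above, and then take $M\to\infty$. To pass to the limit I would couple $G(n,\kappa)$ and $G(n,\kappa_M)$ on the same types $(x_i)$ so that $G(n,\kappa_M)\subseteq G(n,\kappa)$; the expected number of ``excess'' edges incident to a typical vertex is $\E[(\kappa(x_1,x_2)-M)_+]$, which tends to $0$ as $M\to\infty$ by integrability of $\kappa$ and dominated convergence. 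A short argument (analogous to Lemma~\ref{lem.insensitive}) shows that vertices whose $r$-neighbourhood in $G(n,\kappa)$ differs from that in $G(n,\kappa_M)$ form a set of size $o(n)$ in probability, and that $\bp_r(\mathcal{X}_{\kappa_M}, H)\to\bp_r(\mathcal{X}_\kappa, H)$ by monotone/dominated convergence on the branching-process side. Combining these with the bounded-kernel case completes the proof.
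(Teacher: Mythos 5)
This lemma is not proved in the paper at all; it is simply imported as Theorem~9.1 of Bollob\'as, Janson and Riordan~\cite{bollobasjansonriordan2011}, and the paper merely remarks that the cited proof is ``based on approximation by a bounded kernel and embedding the inhomogeneous branching process into a homogeneous process.'' So there is no in-paper argument against which to check your proof; what can be said is whether your independent sketch is plausible and how it relates to the cited proof.

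Your route shares the truncation $\kappa_M = \kappa\wedge M$ with BJR's argument, but replaces their ``embed the inhomogeneous branching process into a homogeneous Poisson$(M)$ process and thin'' device with a direct BFS-to-branching-process coupling plus a Chebyshev second-moment argument. That second-moment strategy (diagonal term $O(1/n)$, off-diagonal asymptotic factorization via a small-collision argument) is precisely the standard device this paper itself uses for the configuration model in Section~\ref{sec.CM}, so it is in the spirit of the rest of the paper, if perhaps less slick than BJR's thinning trick. The sketch is fundamentally sound, with three places you should tighten. First, the number of ``not-yet-revealed neighbours'' in your BFS step is a sum of conditionally independent but non-identical Bernoullis (the types of undiscovered vertices are still i.i.d.\ $\mu$ but the edge probability also involves the reverse ordered pair and the $\min(\cdot,1)$ cap), so the Le Cam step needs to be stated for such sums, not for a literal binomial. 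Second, in the $M\to\infty$ pass Lemma~\ref{lem.insensitive} as stated does \emph{not} apply: for fixed $M$ the set of vertices touching excess edges has size $\Theta(n)$, not $o(n)$. What you actually need is the quantitative version --- for every $\eps'$ there is $\delta$ so that any set of at most $\delta n$ vertices has at most $\eps'n$ vertices within distance $r$ of it, w.h.p.\ --- which is exactly BJR Lemma~9.3 and exactly what the paper invokes in the proof of Lemma~\ref{lem.smalldiff}; you then let $M\to\infty$ to make the excess set small. Third, that quantitative insensitivity lemma requires uniform integrability of the degree of a random vertex of $G(n,\kappa)$ for merely integrable $\kappa$; this does hold (degree $\to$ mixed Poisson in distribution and $\E d_n \to 2\iint\kappa\,d\mu\,d\mu$ by dominated convergence, then Lemma~\ref{lem.uint}), but it is not free and deserves an explicit line rather than being folded into ``analogous to Lemma~\ref{lem.insensitive}.'' With those points made precise, your alternative proof goes through.
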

That is, $\cl((G(n, \kappa),v^*_n)|G_n) \weaklypto \cl(\mathcal{X}_{\kappa})$ for $v_n^* \in_u \{v_1,\dots, v_n\}$ in our notation above.

The proof of \cite{bollobasjansonriordan2011} is elegant and based on approximation using a bounded kernel and embedding
the inhomogeneous branching process into a homogeneous process. 
To apply the result for $G(n, \kappa)$ to random bipartite graphs, we need a minor technical
modification of the above lemma (we omit the proof).
For a graph $G$, a set $A \subseteq V(G)$ and a rooted connected graph $H$, let $p_r(G, H, A)$ denote the probability that $B_r(G, u^*) \cong H$, where $u^* \in_u A$ (define $p_r(G, H, A) :=0$ if $A$ is empty). 
\begin{lemma}\label{lem.probconv}
Suppose $\kappa$ is integrable on $S^2$. Then for any non-negative integer $r$, any rooted connected $H$ 
and any measurable set $A \subseteq S$ such that $\mu(A) > 0$, if $\tilde A$
denotes the set of vertices $v$ of $G(n, \kappa)$ such that $x_v \in A$ then
\begin{equation}\label{eq.localconv}
    p_r(G(n, \kappa), H, \tilde{A}) \xrightarrow{p} \bp_r(\mathcal{X}_{\kappa, A}, H).
\end{equation}
Here $\mathcal{X}_{\kappa, A}$ denotes $\mathcal{X}_\kappa$ conditioned on the event that the type of the root is in $A$.
\end{lemma}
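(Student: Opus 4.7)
The plan is to write the probability in question as a ratio and handle the numerator and denominator separately. With $\rho$ denoting the root of $\mathcal{X}_\kappa$, observe that
\[
p_r(G(n,\kappa), H, \tilde A) = \frac{N_A(n)}{|\tilde A|}, \qquad N_A(n) := \sum_{v \in V(G(n,\kappa))} \ii_{B_r(G(n,\kappa), v) \cong H}\, \ii_{x_v \in A}.
\]
Since $x_{v_1}, \dots, x_{v_n}$ are i.i.d.\ from $\mu$, the weak law of large numbers gives $|\tilde A|/n \xrightarrow{p} \mu(A) > 0$. By Slutsky's lemma it thus suffices to show
\[
N_A(n)/n \xrightarrow{p} \mu(A)\, \bp_r(\mathcal{X}_{\kappa,A}, H) = \pr(B_r(\mathcal{X}_\kappa) \cong H,\ x_{\rho} \in A),
\]
where the equality is by definition of $\mathcal{X}_{\kappa,A}$ as $\mathcal{X}_\kappa$ conditioned on $\{x_\rho \in A\}$.

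For the numerator I would enrich the local-convergence framework of Lemma~\ref{lem.bjrlimit} so that each vertex carries the binary label $\ii_{x_v \in A}$, and then re-examine the BJR proof. That proof proceeds by a BFS exploration of $G(n,\kappa)$ coupled with the branching process $\mathcal{X}_\kappa$, and at each step the type of the newly revealed vertex is drawn from $\mu$ (or from the appropriate tilted measure given the parent type). Attaching the label $\ii_{\,\cdot\,\in A}$ therefore requires no new estimates, and one obtains the joint convergence of the rooted, $\{0,1\}$-labelled neighborhoods $(B_r(G(n,\kappa), v^*), \ii_{x_{v^*} \in A}) \toD (B_r(\mathcal{X}_\kappa), \ii_{x_\rho \in A})$. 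Applied to the bounded functional $f(G,v) = \ii_{B_r(G,v)\cong H}\, \ii_{x_v \in A}$ on this enriched space, this yields
\[
n^{-1} \E N_A(n) = \E f(G(n,\kappa), v^*) \to \pr(B_r(\mathcal{X}_\kappa) \cong H,\ x_\rho \in A).
\]

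Convergence in probability follows by a standard second-moment method. Applying the same enriched coupling to a pair $(u^*, v^*)$ of uniformly random distinct vertices shows that the pair of enriched $r$-neighborhoods becomes asymptotically independent with each marginal distributed as $(B_r(\mathcal{X}_\kappa), \ii_{x_\rho \in A})$; hence $\E(N_A(n)/n)^2 \to \pr(B_r(\mathcal{X}_\kappa) \cong H,\ x_\rho \in A)^2$, so $\mathrm{Var}(N_A(n)/n) \to 0$, and Chebyshev's inequality together with Slutsky's lemma completes the proof. The main obstacle is the enriched version of Lemma~\ref{lem.bjrlimit}: the author signals it is a minor technical modification, and indeed the BJR proof (based on approximation by a bounded kernel and embedding into a homogeneous branching process) naturally carries vertex types through the coupling, but writing out the enrichment with full rigor requires re-verifying the approximation step by step, which is why the proof is omitted in the excerpt.
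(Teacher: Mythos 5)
The paper does not spell out a proof of Lemma~\ref{lem.probconv}; it only remarks that it is ``a minor technical modification'' of Lemma~\ref{lem.bjrlimit} (BJR's Theorem~9.1) and omits the details. Your proposal --- writing $p_r(G(n,\kappa),H,\tilde A)$ as $N_A(n)/|\tilde A|$, disposing of the denominator by the law of large numbers and Slutsky, and handling $N_A(n)/n$ by carrying the bounded root-label $\ii_{x_v\in A}$ through the BJR exploration/branching-process coupling together with a second-moment argument --- is exactly the modification the paper has in mind, and the reductions you make (in particular $\mu(A)\,\bp_r(\mathcal{X}_{\kappa,A},H)=\pr(B_r(\mathcal{X}_\kappa)\cong H,\ x_\rho\in A)$) are correct; so this is essentially the same approach, with the same unverified kernel-approximation details that the paper also chooses not to write out.
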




We also need a simple fact about size-biased mixed Poisson distributions.
\begin{lemma}\label{lem.cpoissonbias}
    Let $\Lambda$ be a non-negative random variable with $0 < \E \Lambda <\infty$.
    Suppose $X \sim Po(\Lambda)$. Then the corresponding size-biased random variable satisfies
    \[
        X^* \sim Po(\Lambda^*) + 1
    \]
\end{lemma}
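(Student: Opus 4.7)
The plan is to compute both sides directly from the definitions of the mixed Poisson distribution, the size-biased distribution for integer-valued random variables, and the size-biased distribution in the general (not necessarily integer) setting, and verify that they agree.

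First I would note $\E X = \E[\E[X\mid\Lambda]] = \E \Lambda \in (0,\infty)$, so $X^*$ is well defined. For any integer $k \ge 1$, the size-bias formula for integer-valued $X$ gives
\[
\pr(X^* = k) = \frac{k\,\pr(X=k)}{\E X} = \frac{1}{\E \Lambda}\,\E\!\left[\frac{k\,e^{-\Lambda}\Lambda^k}{k!}\right] = \frac{1}{\E \Lambda}\,\E\!\left[\frac{e^{-\Lambda}\Lambda^k}{(k-1)!}\right],
\]
while $\pr(X^* = 0) = 0$.

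Next I would evaluate the right-hand side. By the definition of $\Lambda^*$, its law is $d\mu^*(\lambda) = (\E \Lambda)^{-1}\lambda\,d\mu(\lambda)$, where $\mu$ is the law of $\Lambda$. Hence for $k \ge 1$,
\[
\pr(Po(\Lambda^*)+1 = k) = \E\!\left[\frac{e^{-\Lambda^*}(\Lambda^*)^{k-1}}{(k-1)!}\right] = \int \frac{e^{-\lambda}\lambda^{k-1}}{(k-1)!}\cdot\frac{\lambda}{\E\Lambda}\,d\mu(\lambda) = \frac{1}{\E\Lambda}\,\E\!\left[\frac{e^{-\Lambda}\Lambda^k}{(k-1)!}\right],
\]
and clearly $\pr(Po(\Lambda^*)+1 = 0) = 0$. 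Comparing the two expressions proves the claim.

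There is no real obstacle here: the lemma is essentially a one-line calculation once both sides are unwound. The only care needed is to handle the $k=0$ case separately (both sides vanish since $Po(\Lambda^*)+1 \ge 1$ almost surely and $X^* \ge 1$ almost surely by the size-bias weight $k$) and to use Fubini implicitly when interchanging the expectation over $\Lambda$ with the Poisson weight, which is justified by non-negativity.
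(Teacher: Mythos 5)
Your proof is correct, and it takes a genuinely different route from the paper's. You compute the probability mass function of $X^*$ directly from the integer size-bias formula $\pr(X^* = k) = k\,\pr(X=k)/\E X$ and compare it term-by-term with $\pr(Po(\Lambda^*)+1 = k)$ unwound via the change-of-measure $d\mu^*(\lambda) = (\E\Lambda)^{-1}\lambda\,d\mu(\lambda)$; both yield $(\E\Lambda)^{-1}\E\bigl[e^{-\Lambda}\Lambda^{k}/(k-1)!\bigr]$ for $k\ge 1$ and $0$ for $k=0$. The paper instead argues via characteristic functions: it writes $\phi_X(t) = \phi_\Lambda(-i(1-e^{it}))$, invokes the general identity $\phi_{Z^*}(t) = (i\E Z)^{-1}\phi_Z'(t)$, and checks that $\phi_{X^*}$ coincides with $\phi_{Po(\Lambda^*)+1}$. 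Your computation is more elementary and self-contained, requiring nothing beyond the two size-bias definitions and Fubini for non-negative integrands; the paper's argument is shorter once one has the characteristic-function size-bias identity at hand, and that identity is useful elsewhere, but it outsources the key step to an external reference. Both establish the same pmf identity, so the two proofs are equally rigorous; yours is arguably the cleaner choice for a reader who does not already know the derivative formula for $\phi_{Z^*}$.
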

\begin{proof}
    Let $\phi_\Lambda$ be the characteristic function of $\Lambda$. The ch. f. of $X$ is
    \[
        \phi_X(t) = \E e^{it X} = \phi_\Lambda(-i(1 - e^{it})).
    \]
    Also, $\E X = \E \Lambda$.
    If a random variable $Z$ with $0 < \E Z < \infty$ has a ch. f. $\phi_Z$, then the
    ch. f. of $Z^*$ is $(i \E Z)^{-1} \phi_Z'(t)$, see, e.g., \cite{arratiagoldstein2009}.
    We see that the ch. f. of $X^*$,
    \[
        \phi_{X^*}(t) = (i \E \Lambda)^{-1} e^{it} \phi_\Lambda'(-i(1-e^{it}))
    \]
    is equal to the ch. f. of $Y \sim Po(\Lambda^*) +1$:
    \[
        \phi_Y(t) = e^{it} \phi_{\Lambda^*}(-i(1 - e^{it})) = (i \E \Lambda)^{-1} e^{it} \phi_{\Lambda}'(-i(1 - e^{it})).
    \]
\end{proof}

\bigskip


    Let $\beta > 0$, $\xi^{(1)}, \xi^{(2)}, n_1, n_2$ be as in Theorem~\ref{thm.riglocal}(iii). We assume $\xi^{(1)}, \xi^{(2)}$ are independent. For $i = 1,2$ define
    $\tilde{\xi}^{(i)} = (1+\beta)^{1/2} \beta^{-1/4} \xi^{(i)}$. 

    Consider a random inhomogeneous graph $G(N, \kappa)$ where $\mathbb{S} = (S, \mu)$ and $\kappa = \kappa_\beta: S^2 \to [0,1]$ are as follows.
    Let $S = \{1,2\} \times \mathbb{R}$, and let $\mu$ be a measure induced by the random vector 
    \begin{equation} \label{eq.Sdef}
        X = (i_X, w_X) = (1+\ii, (1-\ii) \tilde{\xi}_1 + \ii \tilde{\xi}_2)
    \end{equation}
    where $\ii$ is a Bernoulli random variable with parameter $\beta (1+\beta)^{-1}$ independent of $\{\xi_1, \xi_2\}$. Finally,
    set for $x, y \in S$
    \begin{equation}\label{eq.kappadef}
        \kappa(x,y) = \begin{cases}
                        \frac 1 2 w_x w_y, &\mbox{ if } i_x \ne i_y; \\
                        0, &\mbox{ otherwise }.
                   \end{cases}
    \end{equation}
    We consider the graph $G(N, \kappa)$ as a bipartite graph $(V^{1}, V^{2}, E)$, with $V^{i}$ consisting of all vertices $v_j$ such that $i_{x_j} = i$.

\begin{prop} \label{prop.inhomogbranching}
    Let $\mathcal{X}_\kappa$ be the branching process corresponding to $G(N,\kappa)$ defined above. Let 
    $X=(i_X,w_X)$ be the random type of the root of $\mathcal{X}_\kappa$. 
    Fix $i \in \{1, 2\}$.
    Let $\ct^{(i)} \sim \ct(D_i, D_{\bar i})$, where 
\[
    D_i \sim Po(c_i \xi^{(i)} \E \xi^{(\bar{i})}), \quad c_1 = \beta^{1/2}, c_2 = \beta^{-1/2}.
\]
Then for any 
    non-negative integer~$r$ and a rooted tree $T$ of radius at most $r$
 \[
     \pr(B_r(\cx_\kappa) \cong T | i_X = i) = \bp_r(\ct^{(i)},T). 
 \]
\end{prop}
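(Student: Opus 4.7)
The plan is to unwind the definition of $\mathcal{X}_\kappa$ and verify, generation by generation, that the offspring distribution prescribed by $\kappa$ matches the one in $\ct(D_i, D_{\bar{i}})$. The critical structural fact is that $\kappa(x,y) = 0$ whenever $i_x = i_y$, so the type coordinate flips at every generation, giving the bipartite structure of $\ct$; the rest is arithmetic with the scaling constants.

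First I would analyse the root. Conditioned on $i_X = i$, the root has weight $w \sim \tilde{\xi}^{(i)}$. Its children form a Poisson process on $\{\bar{i}\} \times \mathbb{R}$ with intensity $w \cdot w_y \mathbf{1}_{i_y = \bar{i}}\, d\mu(y)$, so writing $\gamma_j := \E[w_X \mathbf{1}_{i_X = j}] = \pr(i_X = j)\, \E \tilde{\xi}^{(j)}$, their number is $\Po(w \gamma_{\bar{i}})$. Substituting the explicit constants from (\ref{eq.Sdef}) and (\ref{eq.kappadef}) and then integrating over $w \sim \tilde{\xi}^{(i)}$ shows that the unconditional offspring count is mixed Poisson with parameter $c_i \xi^{(i)} \E \xi^{(\bar{i})}$, which is the distribution of $D_i$. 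By the marking property of Poisson processes, given the number of children their weights are i.i.d.\ with law proportional to $w_y \mathbf{1}_{i_y = \bar{i}}\, d\mu(y)$, i.e.\ distributed as $(\tilde{\xi}^{(\bar{i})})^*$ (with deterministic type coordinate $\bar{i}$).

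Iterating, a non-root particle of type $j$ produced in some generation $k \ge 1$ has weight $w' \sim (\tilde{\xi}^{(j)})^*$; the same analysis gives that its children are all of type $\bar{j}$, with i.i.d.\ weights $(\tilde{\xi}^{(\bar{j})})^*$, and their unconditional count is $\Po(c_j (\xi^{(j)})^* \E \xi^{(\bar{j})})$. Applying Lemma~\ref{lem.cpoissonbias} with $\Lambda = c_j \xi^{(j)} \E \xi^{(\bar{j})}$ rewrites this as $\Po(\Lambda^*)$, which equals $D_j^* - 1$ in distribution. Since the weights themselves do not enter the rooted-graph isomorphism class of $B_r(\mathcal{X}_\kappa)$, these transition rules coincide with those defining $\ct(D_i, D_{\bar{i}})$, and an induction on $r$ delivers the claimed identity of distributions.

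The only real obstacle is the bookkeeping around the asymmetric type probabilities $\pr(i_X = 1) = (1+\beta)^{-1}$, $\pr(i_X = 2) = \beta(1+\beta)^{-1}$ together with the scaling factor $(1+\beta)^{1/2}\beta^{-1/4}$ hidden in $\tilde{\xi}^{(i)}$. I expect these to conspire so that the Poisson parameter reduces cleanly to $c_i \xi^{(i)} \E \xi^{(\bar{i})}$ with $c_1 = \beta^{1/2}$, $c_2 = \beta^{-1/2}$ in both cases $i \in \{1,2\}$, but this is the one point where a careful side-calculation must be carried out rather than just invoked.
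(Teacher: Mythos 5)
Your proposal is correct and follows essentially the same route as the paper's own proof: compute the root's offspring count by integrating the Poisson intensity over weights, use the marking property to identify children's weights as size-biased, invoke Lemma~\ref{lem.cpoissonbias} to recognize the non-root offspring count as $D_j^*-1$, and induct on the generation. The side-calculation you flag does close cleanly: with $\tilde{\xi}^{(i)} = (1+\beta)^{1/2}\beta^{-1/4}\xi^{(i)}$ and $q_{\bar{i}} = \pr(i_X=\bar{i})$, the root's mixed-Poisson parameter $\tilde{\xi}^{(i)} q_{\bar{i}} \E\tilde{\xi}^{(\bar{i})}$ collapses to $c_i\,\xi^{(i)}\E\xi^{(\bar{i})}$ for both $i=1,2$, exactly as the paper verifies in (\ref{eq.inhpoisdeg}).
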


\begin{proof}
 Recall that $X$ has unconditional distribution $\mu$ defined in (\ref{eq.Sdef}). Let $X_2$
 be an independent copy of $X$.

     We have $\pr(i_X = i) = q_i$, where $q_1 =(1+\beta)^{-1}$ and $q_2 = \beta (1+\beta)^{-1}$. 
Write $s= {\bar i}$. 
By definition, conditioned on $X = x = (i, w)$, the types of children of the root are  points of a Poisson process with intensity
\[
    \mu_{i,w}(z)=2 \kappa(x, z) d \mu(z) = w w_{z} \ii_{i \ne i_{z}} d \mu(z).
\]
Thus, given $X= x= (i,w)$, 
the number $d_{0,1}$ of children of the root
is distributed as
\begin{equation}\label{eq.porootchildren}
    Po(w \E \ii_{i_{X_2} \ne i} \tilde{\xi}^{(s)}) = Po(w q_{s} \E \tilde{\xi}^{(s)}).
\end{equation}
Given $X= (i, w)$, $d_{0,1} = k$, the types of the children of the root are independent elements
$X_{1, 1}, \dots, X_{1,k}$ of $S$, where  $X_{1,j} = (s, W_{1,j})$ and $W_{1,j}$
has distribution $\mu_s$ given by 
\begin{equation}\label{eq.muinh}
    \mu_s(A) = \frac {\int_{A} w_x q_{s} t d \mu_{\tilde{\xi}^{(s)}}(t)} {w_x q_{s} \E \tilde{\xi}^{(s)}} 
    = (\E \tilde{\xi}^{(s)})^{-1} \int_A t d \mu_{\tilde{\xi}^{(s)}}(t)
\end{equation}
for each Borel set $A$. Here we used the well known property on the distribution
of the points in an inhomogeneous random process given that the number of points is $k$, see, e.g. \cite{durrett}.
Note that $\mu_s$ does not depend on $w$ and it is the distribution of the size-biased random variable 
\[
    \left(\tilde{\xi}^{(s)}\right)^* = (1+\beta)^{1/2} \beta^{-1/4} \left(\xi^{({s})}\right)^*.
\]
Thus, using (\ref{eq.porootchildren}) and the fact that $w_X \sim \tilde{\xi}^{(i)}$ is independent of $i_X$, 
for any $k = 0,1, \dots$
\begin{align}
    \pr(d_{0,1} = k | i_X = i) &= \frac{\E \E (\ii_{d_{0,1}= k} \ii_{i_X = i} | w_X)} {\E \ii_{i_X = i}} \nonumber
    \\ &
    =\pr (Po(w_X q_{\bar i} \E \tilde{\xi}^{({\bar i})}) = k) = \pr(D_i = k). \label{eq.inhpoisdeg}
\end{align}
Also, by (\ref{eq.muinh}) for any $k = 0, 1, \dots$ and  $(t_1, \dots, t_k) \in \mathbb{R}^k$
\begin{align}
    &\pr(W_{1,1} \le t_1, \dots, W_{1,k} \le t_k | i_X = i, d_{0,1} = k)  \nonumber
    \\ &
    =
    \frac{\E \E \left( \ii_{W_{1,1} \le t_1, \dots, W_{1,k} \le t_k} \ii_{i_X=i} \ii_{d_{0,1}=k} | w_X \right)} 
    {\E \ii_{i_X=i} \ii_{d_{0,1}=k} }
    = \prod_{j=1}^k \pr(\tilde{\xi}_j^{({\bar i})} \le t_j). \label{eq.dstarprod}
\end{align}

(Here, to formally establish the connection of (\ref{eq.porootchildren}) and (\ref{eq.muinh}) with the conditional expectations
in (\ref{eq.inhpoisdeg}) and (\ref{eq.dstarprod}) respectively, construct
the probability space for $\cx_\kappa$ as a countable product of spaces generated
by $w_X$ and a sequence $Z$ of other independent random variables, see, e.g., Example~3.6.8 of \cite{durrett}.
Then define  a measurable function $\phi$,
so that $g(w) = \phi(w, Z)$ is the random variable of interest when we consider $w_X=w$ fixed.
Here the random variables of interest are of the form $\ii_{d_{0,1} = k} \ii_{i_X = i}$ and 
$\ii_{W_{1,1} \le t_1} \dots \ii_{W_{1,k}\le t_k} \ii_{d_{0,1} = k} \ii_{i_X = i}$.
Finally, use Radon-Nikodym's theorem, see Example~5.1.5 of \cite{durrett}.)


The trees $\ct^{(i)}$ and $\cx_\kappa$ can both be considered as random rooted plane trees,
i.e., rooted trees where the offspring of each node is ordered. Below
we consider the set of generation $r$ vertices of 
a plane tree $T$ as ordered (e.g., by the discovery times of the breadth-first search in $T$).
For a plane tree $T$, we let $\tilde{B}_r(T)$ denote the plane tree induced on the generations $0, \dots, r$
of $T$. For two rooted plane trees $T_1, T_2$ we write $T_1 = T_2$ if there is
an isomorphism from $T_1$ to $T_2$ preserving the root and the ordering for each vertex.

Let $S(r)$ be the sequence of vertices at distance $r$ from the root in $\cx_\kappa$.
For $r =  1,2,\dots,$ let $i_r = \bar{i}$ if $r$ is odd and $i_r = i$ if $r$ is even.
To complete the proof, we show the following claim by induction on $r$:
for any finite rooted plane tree $T$ of radius at most $r$, and any $i=1,2$,
$\pr(\tilde{B}_r(\cx_\kappa) = T | i_X = i) = \pr(\tilde{B}_r(\ct^{(i)}) = T)$; furthermore given $i_X = i$ and $B_r(\cx_\kappa) = T$, the types of vertices in $S(r)$  are conditionally independent  
and distributed as 
\begin{equation}\label{eq.claimind}
    \left(i_r, \left(\tilde{\xi}^{(i_r)}\right)^*\right).
\end{equation}
Indeed, we have already shown the claim for $r=1$. Suppose $r \ge 2$.
Let $T$ be a rooted plane tree of radius at most $r$. If the radius of $T$ is less than $r$, we are done.
So assume there are $m \ge 1$ and $q \ge 1$ vertices in generations $r$ and $r-1$ of $T$ respectively. Let $T_{r-1} = \tilde{B}_{r-1}(T)$.
 Denote by $X_{a, j} = (I_{a, j}, W_{a,j})$ the type and by
$d_{a, j}$ the number of offspring of the $j$-th vertex in $S(a)$.
Of course, for any $j=1, \dots, q$
$\pr(I_{r-1, j} = i_{r-1}| \tilde{B}_{r-1}(\cx_\kappa) = T_{r-1}, i_X=i) = 1$ by induction.
By (\ref{eq.porootchildren}) and induction,  the conditional distribution of  $(d_{r-1,1}, \dots, d_{r-1, q})$
given $i_X=i$ and $\tilde{B}_{r-1}(\cx_\kappa)=T_{r-1}$, is the same as the distribution
of a vector of $q$ iid random variables $(\eta_1, \dots, \eta_q)$,
\[
   \eta_j \sim Po\left(W_{r-1,j} q_{i_r} \E \xi^{(i_r)}\right) \sim Po\left( \left({\tilde \xi}^{(i_{r-1})}\right)^* q_{i_r} \E \xi^{(i_r)}\right) \sim D_{i_r}^* - 1.
\] 
Let $y_j$ be
the number of children in $T$ of the $j$-th vertex in generation $r-1$ of $T$. We have
\begin{align}
    &\pr(d_{r-1, 1} = y_1, \dots, d_{r-1, q} = y_q | i_X = i, \tilde{B}_{r-1}(\cx_\kappa) = T_{r-1}) \nonumber
    \\ &=\frac {\E \E \ii_{i_X = i} \ii_{ \tilde{B}_{r-1}(\cx_\kappa) = T_{r-1}} \prod_{j=1}^q \ii_{\eta_j = y_j}} { \E \ii_{i_X = i} \ii_{ \tilde{B}_{r-1}(\cx_\kappa) = T_{r-1}}} 
    =\prod_{j=1}^q \pr(D_{i_{r-1}}^* = y_j). \label{eq.dstarnn}
\end{align}
By (\ref{eq.dstarnn}) and induction
\begin{align*}
    \pr({\tilde B}_r(\cx_\kappa) = T | i_X = i) &= \pr({\tilde B}_{r-1}(\cx_\kappa) = T_{r-1} | i_X = i) \times 
    \\
      &\pr(d_{r-1, 1} = y_1, \dots, d_{r-1, j} = y_q | i_X = i, \tilde{B}_{r-1}(\cx_\kappa) = T_{r-1}) 
    \\  &= \pr({\tilde B}_{r-1}(\ct^{(i)}) = T_{r-1}) \prod_{j=1}^q \pr(D_{i_{r-1}}^* = y_j) = \pr({\tilde B}_{r}(\ct^{(i)}) = T)
\end{align*}
Using (\ref{eq.muinh}), we see that given  $i_X = i$, $B_{r-1}(\cx_\kappa) = T_{r-1}$, $X_{r-1,j} =(i_{r-1}, w_{r-1,j})$ and
$d_{r-1,j} = y_j$, $j=1, \dots, q$, 
the types of offspring of each vertices in $S(r-1)$
are independent and distributed as $(i_r, \left(\tilde{\xi}^{(i_{r})}\right)^*)$.
By the conditional independence of the offspring of $S(r-1)$
we get, integrating over $w_{r-1,1}, \dots, w_{r-1, q}$, similarly as in (\ref{eq.dstarprod}), that for any
$t_1, \dots, t_m \in \mathbb{R}$
\[
    \pr(W_{r,1} \le t_1, \dots, W_{r,m} \le t_m | i_X = i, T_r = T) = \prod_{j=1}^m \pr((\tilde{\xi}^{(i_r)})^* \le t_j).
\]
The measure theoretic details concerning conditional expectations here may be filled using a similar argument as above.
\end{proof}

\bigskip

We will need the following fact based on ideas from \cite{bollobasjansonriordan2011}. The statement and the proof
would also hold more generally when $\kappa$ is an ``integrable kernel family'', see \cite{bollobasjansonriordan2011}. Recall that $S \triangle S'$
denotes the symmetric difference between sets $S$ and $S'$. We will apply the lemma with small $t$. 
\begin{lemma} \label{lem.smalldiff}
    Fix a rooted connected graph $H$, a non-negative integer $r$ and $t > 0$. 
    Let $G(n, \kappa), A$ and ${\tilde A}$ be as in Lemma~\ref{lem.probconv}.
    Let $G'_n$ be another random graph and $A'$ a random subset of $V(G'_n)$. 

    Write $\Delta_V = V(G(n,\kappa)) \triangle V(G'_n)$, $\Delta_E = E(G(n,\kappa)) \triangle E(G'_n)$,
    $\Delta_A = {\tilde A} \triangle A'$.
    There is $\delta > 0$ such that if $\pr(|\Delta_V| + |\Delta_E| + |\Delta_A| \le \delta n) \to 1$, then
    \[
        \pr(|p_r(G'(n), H, A') - c_H| > t) \to 0, 
    \]
    where $c_H = \bp_r(\mathcal{X}_{\kappa, A}, H)$.
\end{lemma}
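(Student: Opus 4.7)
The plan is to reduce to Lemma~\ref{lem.probconv} by comparing $p_r(G'_n, H, A')$ directly with $p_r(G(n,\kappa), H, \tilde A)$, which converges in probability to $c_H$ by that lemma. Write $G = G(n,\kappa)$, set $B_0 = \Delta_V \cup V(\Delta_E)$, and define $B_k$ inductively as $B_{k-1}$ together with every vertex adjacent to $B_{k-1}$ in either $G$ or $G'_n$. For any $v \in V(G) \cap V(G'_n) \setminus B_r$ the $r$-neighborhoods in $G$ and $G'_n$ agree, so when $|B_r| + |\Delta_A|$ is small relative to $|\tilde A|$ and $|A'|$, the two statistics will be close. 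Note also that $|\tilde A|/n \xrightarrow{p} \mu(A) > 0$ by the law of large numbers (types are iid with law $\mu$), and the hypothesis $|\Delta_A| \le \delta n$ whp then gives $|A'|/n \ge \mu(A)/2$ whp for $\delta$ small.

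The main technical step is to show that, for any prescribed $\eta > 0$, one can pick $\delta$ so small that $|B_r| \le \eta n$ whp. This parallels Claim~\ref{claim.neighbourhood} together with its iteration in the proof of Lemma~\ref{lem.insensitive}, but the difficulty is that degrees in $G$ and $G'_n$ are unbounded, so one cannot bound each iterate by a constant times the previous one. To overcome this I would use that the integrability of $\kappa$ implies uniform integrability of the degree $d_n$ of a uniformly random vertex of $G$ (a standard property of the model; it also follows by combining $\E d_n \to 2 \int_{S^2} \kappa \, d(\mu \times \mu) < \infty$ with Lemma~\ref{lem.probconv} applied at $r=1$ and Lemma~\ref{lem.uint}). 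Given $\eps > 0$ one then chooses a threshold $T$ so that (a) the number of $G$-vertices of degree $> T$ is at most $\eps n$ whp, and (b) the total degree contribution $\sum_v d_G(v) \ii_{d_G(v) > T}$ is at most $\eps n$ whp. The symmetric-difference hypothesis $|\Delta_E| \le \delta n$ transfers these bounds to $G'_n$ up to $O(\delta n)$ corrections. Then for any $S$ the one-step neighborhood satisfies $|N_{G \cup G'_n}(S)| \le (2T+1)|S| + O(\eps n + \delta n)$, and iterating $r$ times gives $|B_r| \le C(T,r)(\delta + \eps) n$ whp, which is $\le \eta n$ once $\eps$ and $\delta$ are small relative to $T, r, \eta$.

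Finally, once $|B_r| + |\Delta_A| \le \eta n$ whp and $|\tilde A|, |A'| \ge \mu(A) n / 2$ whp, a direct count of the numerators and denominators of the two ratios yields
\[
|p_r(G'_n, H, A') - p_r(G, H, \tilde A)| \le \frac{6 \eta}{\mu(A)}
\]
whp. Choosing $\eta < t\mu(A)/12$ and combining with the convergence $p_r(G, H, \tilde A) \xrightarrow{p} c_H$ from Lemma~\ref{lem.probconv} completes the argument. I expect the dominant obstacle to be the iteration bound, namely controlling neighborhoods in $G'_n$ using only the symmetric-difference hypothesis, with no a priori degree structure on $G'_n$ beyond what is inherited from $G(n,\kappa)$.
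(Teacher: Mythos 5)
Your overall strategy is the same as the paper's: compare $p_r(G'_n, H, A')$ with $p_r(G(n,\kappa), H, \tilde A)$, use Lemma~\ref{lem.probconv} for the latter, and reduce the comparison to bounding the fraction of vertices whose $r$-neighbourhood can differ between the two graphs. The final ratio comparison is fine, and noting the ``transfer'' corrections for $G'_n$ is the right instinct. The place where the paper differs is that it cites Lemma~9.3 of \cite{bollobasjansonriordan2011} for the one-step neighbourhood bound and simply applies it $r$ times, whereas you try to derive the iterated bound directly from uniform integrability of the degree.

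There is a genuine gap in your iteration. You pick $T = T(\eps)$ so that the total degree mass above $T$ is at most $\eps n$, then iterate the one-step bound $|N(S)| \le (2T+1)|S| + O((\eps+\delta)n)$, obtaining $|B_r| \lesssim (2T+1)^r (\delta + \eps) n$, and claim this is $\le \eta n$ once $\eps, \delta$ are ``small relative to $T, r, \eta$''. But $T$ depends on $\eps$, so you cannot choose $\eps$ small relative to $T$. Concretely, you need $(2T(\eps)+1)^r \eps \le \eta$, which requires $\E d^* \ii_{d^* > T} = o(T^{-r})$, i.e.\ tails decaying faster than $T^{-r-1}$. The kernel $\kappa$ is only assumed integrable, so the limiting degree $d^*$ need only be integrable; for power-law degree distributions with exponent between $2$ and $r+2$ your bound blows up. The fix (which is what Lemma~9.3 of \cite{bollobasjansonriordan2011}, or equivalently the paper's own Claim~\ref{claim.neighbourhood} composed as in the proof of Lemma~\ref{lem.insensitive}, provides) is to reorganize the iteration as a composition rather than an arithmetic accumulation: prove the \emph{single} implication ``for every $\eps'>0$ there is $\delta'(\eps')>0$ such that $|Q|\le\delta' n$ implies $|N_1(Q)|\le \eps' n$ whp'' (here $T$ may depend on $\eps'$, and then $\delta'$ is chosen \emph{after} $T$, so there is no circularity), and then chain it $r$ times by setting $\eps_r=\eta$, $\delta_r=\delta'(\eps_r)$, $\eps_{r-1}=\delta_r$, $\delta_{r-1}=\delta'(\eps_{r-1})$, and so on. This works for any integrable degree law. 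A secondary simplification the paper uses, which would also spare you the technical work of transferring degree bounds to $G'_n$: track distances only in $G(n,\kappa)$ from the set of ``bad'' vertices (those in $\Delta_V\cup\Delta_A$ or incident to $\Delta_E$). If $v$ lies at $G(n,\kappa)$-distance $>r$ from every bad vertex, then every vertex in $B_r(G(n,\kappa),v)$ has identical neighbourhoods in the two graphs, which already forces $B_r(G(n,\kappa),v)=B_r(G'_n,v)$; no control of $G'_n$'s degree structure is needed.
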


\begin{proof}
    It suffices to prove the lemma for $c_H > 0$, the proof for $c_H=0$ follows by Lemma~\ref{lem.bjrlimit}, Lemma~\ref{lem.probconv}
    and the law of total probability, in the same way as (\ref{eq.suff}).
    We may assume $t < 0.1$. Write $a = \mu(A)$ and fix $\eps \in (0,  \frac 1 8 a c_H t)$. 

    Call a vertex $v$ of $G_n = G(n,\kappa)$ \emph{bad} if it is in $\Delta_V \cup \Delta_A$ or it is incident to an edge in $\Delta_E$.
    Let $B$ the set of all bad vertices.
    Then $|B| \le  |\Delta_V|  + 2|\Delta_E| + |\Delta_A|$.

    Lemma~9.3 of \cite{bollobasjansonriordan2011} states that for each $\eps' > 0$ we can find
    $\delta' > 0$ such that for each set $Q$ of size at most $\delta' n$ the number of vertices in $G_n$ at distance at most 1 from
    $Q$ is at most $\eps' n$ with probability $1-o(1)$. 

    By $r$ applications of this lemma if follows that if $\delta$ is sufficiently small and 
     $|\Delta_V|  + |\Delta_E| + |\Delta_A| \le \delta n$ then
    the set $S$ of vertices $v \in V(G(n, \kappa))$ at distance at most $r$ in $G(n,\kappa)$ from $B$
    satisfies $|S| \le \eps n$ with probability $1-o(1)$. We can assume that $\delta \le \eps$.

    Call a realisation of $(G_n, G_n')$ \emph{bad} if
    \begin{itemize}
        \item $|S| > \eps n$;
        \item or $|p_r(G_n, H, \tilde{A}) - c_H| > \eps$;
        \item or $| |{\tilde A}| - a n| > \eps n$;
    \end{itemize}

    Then $(G_n, G_n')$ is good whp (with probability $1-o(1)$): the first event does not occur whp by the above argument, the second event does not occur whp by
    Lemma~\ref{lem.probconv}, finally, the third event does not occur whp by the law of large numbers, since $|\tilde A| \sim Binom(n, \mu(A))$.

    We will write as a shorthand $x = b \pm c$ for $x \in [b - c, b+c]$. We write
    $x \pm b = c \pm d$ for $[x-b, x+b] \subseteq [c-d, c+d]$.

    Suppose $(G_n, G_n')$ is good. Then for each vertex $v$ of $G_n$ such that $v \not \in S$ we  have $B_r(G_n,v) \cong B_r(G_n',v)$.
    Write $N(G, Q) = \sum_{v \in Q} \ii_{B_r(G,v) \cong H}$. Then
    \[
        p_r := \frac {N(G_n, \tilde A)} {|{\tilde A}|} = c_H \pm \eps
    \]
    and 
    \[
        N(G_n, {\tilde A}) = a c_H n \left(1 \pm \frac \eps a\right) \left(1 \pm \frac {\eps} c_H\right)  = a c_H n (1 \pm t).
    \]
    Further, since $(G_n, G_n')$ is good, $|N(G_n, {\tilde A}) - N(G_n', A')| \le |S| + |V(G_n') \setminus V(G_n)| \le 2 \eps n$;
    $||{\tilde A}| - |A'|| \le \delta n \le \eps n$ and $||{\tilde A}| - a n| \le \eps n$.
    Thus we have
    \begin{align*}
        & p_r(G_n', H, A') = \frac {N(G_n', A')} {|A'|}  = \frac {N(G_n, {\tilde A}) \pm 2 \eps n} {|\tilde A| \pm  \eps n}
        \\ &= \frac {N(G_n,\tilde A)} {|\tilde{A}|} ( 1 \pm \frac {2 \eps n} {N(G_n, \tilde A)}) (1 \pm  \frac {2\eps n} {|\tilde{A}|})
        \\ &= p_r(G_n, H, {\tilde A}) (1 \pm t)
    \end{align*}
    Here we used simple inequalities $(1 \pm \eps)^{-1} = (1 \pm 2 \eps)$,  $(1 \pm \eps_1) (1 \pm \eps_2) =
    1 \pm 2 (\eps_1 + \eps_2)$ that hold for any $\eps_1, \eps_2, \eps \in (0, 0.5]$ and the facts that
    \[
        \frac {2 \eps n} {N(G_n, \tilde A)} < \frac {4 \eps} {a c_H}  \le 0.5t < 0.5; \qquad \frac {2\eps n} {|\tilde A|} \le \frac {4 \eps} a < 0.5t < 0.5.
    \]
    Thus we have
    \[
        \pr( |p_r(G_n', H, A') - c_H| > t) \le \pr( (G_n, G_n') \mbox{ is bad}) \to 0.
    \]
\end{proof}

Recall that for a random bipartite graph $H'$, $p_r^{(1)}(H, H') = \E (\ii_{B_r(H', u^*) \cong H} | H')$ where $u^* \in_u V_1(H')$.
\begin{lemma} \label{lem.inhomoglocal}
    Suppose $\beta$, $\xi^{(1)}$, $\xi^{(2)}$, $n_1$, $n_2$ and $H_n$ are as in Theorem~\ref{thm.riglocal}(iii).

    Then for any rooted tree $H$,  $p_r^{(1)}(H_n, H) \xrightarrow{p} \bp_r(\mathcal{X}_{\kappa, A_1}, H)$, where $\mathcal{X}_{\kappa}$
    is the branching process corresponding to $(S, \mu)$, $\kappa=\kappa_\beta$ given in (\ref{eq.Sdef}) and (\ref{eq.kappadef}),
    and $\mathcal{X}_{\kappa, A_1}$ is $\mathcal{X}_{\kappa}$ conditioned on the event $A_1$ that the type $X$ of $\rr(\cx_\kappa)$ 
    satisfies $i_X = 1$.
  \end{lemma}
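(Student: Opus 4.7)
The plan is to reduce the claim to Lemma~\ref{lem.smalldiff} by coupling $H_n$ with the bipartite graph obtained by interpreting $G_n^* := G(N, \kappa_\beta)$, $N = n_1 + n_2$, as bipartite via the type label $i_x$. Since $p_r^{(1)}(H_n, H) = p_r(H_n, H, V^{(1)})$, and $A := \{x \in S : i_x = 1\}$ has $\mu(A) = (1+\beta)^{-1} > 0$, once I construct a coupling under which
\[
|V(H_n) \triangle V(G_n^*)| + |E(H_n) \triangle E(G_n^*)| + |V^{(1)} \triangle \{v : i_{x_v} = 1\}| = o(N)
\]
with probability $1 - o(1)$, Lemma~\ref{lem.smalldiff} applied with $A$ as above will yield $p_r^{(1)}(H_n, H) \xrightarrow{p} \bp_r(\mathcal{X}_{\kappa, A_1}, H)$.

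To build the coupling, I first sample types $x_1, \dots, x_N$ iid from $\mu$, producing parts $V_i^* = \{v : i_{x_v} = i\}$ with $|V_i^*| \sim \mathrm{Binom}(N, q_i)$, where $q_1 = (1+\beta)^{-1}$ and $q_2 = \beta(1+\beta)^{-1}$. Since $q_i N = n_i(1 + o(1))$, Chebyshev's inequality gives $||V_i^*| - n_i| = o_p(N)$. I then identify an arbitrary subset of $V_i^*$ of size $\min(|V_i^*|, n_i)$ with a subset of $V^{(i)}$ of the same size, so that $|\Delta_V|$ and $|\Delta_A|$ are both $o_p(N)$. A quantile coupling matches the weights $w_v$ attached to an identified type-$i$ vertex in $G_n^*$ with the corresponding weight $\xi_v^{(i)}$ of $H_n$, consistent with the scaling $\tilde{\xi}^{(i)} = (1+\beta)^{1/2} \beta^{-1/4} \xi^{(i)}$ built into~(\ref{eq.Sdef}).

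For the edges, I use one independent uniform $U_{uv} \in [0,1]$ per unordered pair of identified vertices of opposite types. Include $uv$ in $H_n$ iff $U_{uv} \le p^H_{uv} := \min(\xi_u^{(1)} \xi_v^{(2)}/\sqrt{n_1 n_2}, 1)$, and in $G_n^*$ iff $U_{uv} \le p^G_{uv} := 1 - (1 - \min(\kappa(x_u, x_v)/N, 1))^2$. A direct calculation using $N = n_1(1+\beta)(1+o(1))$ and $\sqrt{n_1 n_2} = \sqrt{\beta}\, n_1 (1+o(1))$ gives $p^G_{uv} = p^H_{uv}(1 + o(1)) + O(N^{-2})$ uniformly for pairs with bounded weights. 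The expected number of mismatched edges is $\sum_{uv} |p^H_{uv} - p^G_{uv}|$, and a standard truncation (split on the event $\xi_u^{(1)} \xi_v^{(2)} > \omega_n$ for a slowly growing $\omega_n$) together with $\E \xi^{(i)} < \infty$ shows this is $o(N)$. Markov's inequality then yields $|\Delta_E| = o_p(N)$.

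The principal technical obstacle will be the edge-mismatch estimate: the first-moment assumption $0 < \E \xi^{(i)} < \infty$ is the weakest possible, so the clipping region where $\xi_u^{(1)} \xi_v^{(2)} \gtrsim \sqrt{n_1 n_2}$ has to be controlled by truncation rather than by a crude moment bound. Once the three symmetric-difference sizes are $o_p(N)$, Lemma~\ref{lem.smalldiff} delivers the stated convergence to $\bp_r(\mathcal{X}_{\kappa, A_1}, H)$.
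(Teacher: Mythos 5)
Your proposal follows essentially the same route as the paper: build a coupling between $H_n$ and $G(N,\kappa_\beta)$ viewed as a bipartite graph (the paper scales the shared weights $\xi_v^{(i)}$ deterministically and samples $n_1'\sim\mathrm{Binom}(N,q_1)$ rather than iid types, but this is equivalent to your quantile/identification scheme), show the three symmetric differences $\Delta_V,\Delta_E,\Delta_A$ are $o_p(N)$, and invoke Lemma~\ref{lem.smalldiff}. One small omission: your edge-mismatch sum $\sum_{uv}|p^H_{uv}-p^G_{uv}|$ only covers pairs of identified vertices, so you still need the (easy) bound on edges incident to $\Delta_V$ — the paper does this separately as $\Delta_E^1$, using $\E\xi^{(i)}<\infty$ to get $\E|\Delta_E^1|=O(\E|\Delta_V|)=o(N)$.
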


\begin{proof} 
    Let $N = \lfloor (1+\beta) n_1 \rfloor$. 
%
       We will define a coupling between $H_n$ and $H_n' \sim G(N, \kappa)$, as follows.
    Construct sequences of independent random variables
    indexed by the elements of the fixed countable vertex sets $\cv^1, \cv^2$:
    \begin{equation}\label{eq.seqxipre}
        \{ \xi_{v}^{(i)} : v \in \cv^i \}; \quad \xi_{v}^{(i)} \sim \xi^{(i)} \quad v \in \cv^i, \quad  i=1,2,
    \end{equation}
    \[
        \{U_{u,v}, u \in \cv^1, v \in \cv^2\}, \quad U_{u,v} \sim Uniform([0,1]), u \in \cv^1, v \in \cv^2
    \]
    and an independent 
    \begin{equation}\label{eq.seqipre}
        n_1' \sim Binom(N, q_1); \quad n_2' = N - n_1'.
    \end{equation}
    Here, as before,  $q_1 = \frac 1 {1+\beta} > 0$ and $q_2 = 1 - q_1 > 0$.
   
    We construct $H_n'$ as follows. We let $V_i(H_n')$ consist of the first $n_i'$ elements of $\cv^i$.
    For $i =1,2$ and each $u \in V_i(H_n')$ we assign the 
    point $x_u = (i_u, w_u)= (1, {\tilde \xi}_u^{(i)}) \in S$. We define for $u \in \cv^1$, $v \in \cv^2$
    \[
        p_{u,v}' := \frac {\kappa(x_u, x_v)} N =  \frac { {\tilde \xi}_u {\tilde \xi}_v} {2 N},
    \]
    and let $E' = E'(H_n')$ consist of those pairs $uv$, $u \in \cv^1$, $v \in \cv^2$ where 
\[
    U_{u,v} \le p_{u,v}'' := 2 p_{u,v}' - (p_{u,v}')^2.
\]
    Clearly, taking an independent random permutation of the vertices of $H_n'$ gives a copy of $G(N,\kappa)$.

    Similarly, we construct the graph $H_n$ by letting $V_i(H_n)$ consist of the first $n_i$ vertices
    of $\cv^i$, $i = 1,2$, defining for $u \in \cv^1$, $v \in \cv^2$
    \[
        p_{u,v} :=   \frac {\xi^{(1)}_u \xi^{(2)}_v} {\sqrt {n_1 n_2}};
    \]
    and adding $\{u,v\}$ to $E = E(H_n)$ if and only if $U_{u,v} \le p_{u,v}$.

    We let $A = V_1(H_n)$ and $A' = V_1(H_n')$. 
    Denote $\Delta_V = V(H_n) \triangle V(H_n')$ and $\Delta_E = E(H_n) \triangle E(H_n')$.
    Since $n_i' = Binom(N, q_i)$ for $i = 1,2$ we have, using 
    Chebyshev's inequality
    \[
        \E |n_i' - N q_i|  \le n_1^{3/4} + N \pr (|n_i' - N q_i| > n_1^{3/4}) = O\left(n_1^{3/4}\right) 
    \]
    So for $i = 1,2$
    \[
        \E |n_i' - n_i| \le \E |n_i' - N q_i| + |N q_i - n_i| = o(N)
    \]
    and $\E |\Delta_V| = o(N)$. Let $\Delta^1_E$ be 
    the set of edges incident to vertices in $\Delta_V$ combined over both graphs $H_n$, $H_n'$. 
    Let $u_0$ and $v_0$ be the first vertex of $\cv^1$ and $\cv^2$ respectively.
    Using our coupling
    we see that there is a constant $c$ such that for all $n$ large enough
    \[
      \E (|\Delta_E^1| \big | n_1') \le |\Delta_V | N \E (p_{u_0,v_0} + p_{u_0,v_0}'') \le c |\Delta_V|.
    \]
    Here we used the assumption (iii) that $\E \xi^{(i)} < \infty$, $i=1,2$. Hence $\E |\Delta_E^1| = O(\E|\Delta_V|) = o(N)$.

    We have 
    \[
        \frac {\E \left(\xi^{(i)}\right)^2} {N^{1/2}} \le N^{-1/4} \E \xi^{(i)} + N^{-1/2} \E \xi^{(i)} \mathbb{I}_{\xi^{(i)} \ge N^{1/4}}
        = o(n^{-1/2} \E \xi^{(i)}).
    \]
    Thus
    \[
        \E (p_{u,v}'')^2 = o(n_1^{-1} \E \xi^{(1)} \xi^{(2)}).
    \]
    Using also the assumption $n_2 / n_1 \to \beta$ of Theorem~\ref{thm.riglocal} 
    all $u \in \cv^1$, $v \in \cv^2$ 
    \[
        \E |p_{u,v} - p_{u,v}''|  \le {\E \xi^{(1)} \xi^{(2)}}  \left| (n_1 n_2)^{-1/2} - \beta^{-1/2} n_1^{-1} (1+o(1)) \right| = o(n_1^{-1}) 
    \]
    Finally, let $\Delta_E^2 = \Delta_E \setminus \Delta_E^1$, denote by $\E_*$ the conditional
    expectation
    given (\ref{eq.seqxipre}), (\ref{eq.seqipre}), and let $\cv^i_N$ for $i=1,2$ consist of the first $N$ elements of $\cv^i$. We have
    \[
        \E |\Delta_E^2| \le \E \E_* (|\Delta_E^2|) \le  \E \sum_{u \in \cv^1_N} \sum_{v \in \cv^2_N} |p_{j,k}'' - p_{j,k}| = o(N).
    \]
    We have shown that $H_n$ and $H_n'$ are such that
    $\E |\Delta_V| = o(N)$, $\E |\Delta_E| = o(N)$ and $\E |A \triangle A'| = \E |n_1' - n_1| = o(N)$. 
    Now 
    Lemma~\ref{lem.smalldiff} completes the proof.
\end{proof}

\bigskip

\begin{proofof}{Theorem~\ref{thm.riglocal}, case (ii)}
    The claim follows by Lemma~\ref{lem.inhomoglocal} and Proposition~\ref{prop.inhomogbranching}.
\end{proofof}

\subsection{The configuration model}
\label{sec.CM}
    We present a proof based on coupling of the breadth-first exploration of $B_r(H_n, u_1^*)$, $\dots$, 
        $B_r(H_n, u_k^*)$ with the first $r$ generations of $k$ independent branching processes $\ct(D_1, D_2)$, see also \cite{bollobasriordan2011b, gamarnikmisra2014}. We continue with the notation of the beginning of this section, page~\pageref{sec.proofs}.
 
    \bigskip

    \begin{proofof}{Theorem~\ref{thm.riglocal}(iv)} 
       Let $\ct = \ct(D_1, D_2)$. 
       It suffices to show that for arbitrary fixed positive integers $r, k$ and any sequence  $C_1, \dots, C_k$ of rooted trees of radius at most~$r$
       \begin{equation}\label{eq.bfsconvconf}
           \left | \bp_r^{(1)}(H_n, C_1, \dots, C_k) - \prod_{j=1}^k \bp_r(\ct, C_j) \right| \to 0.
       \end{equation}
        Applying this with $k=1$ and $k=2$ and using the same standard second moment argument as in the proof of Theorem~\ref{thm.riglocal}(i) on page~\pageref{pf.riglocal.i-ii}
        shows that $p_r^{(1)}(H_n, C_1) \xrightarrow{p} \bp_r(\ct, C_1)$ which yields (\ref{eq.suff}). 
       In the rest of the proof we show (\ref{eq.bfsconvconf}).

        Let $Y_r$ denote the number of nodes in the first $r$ generations (i.e., generations $0, \dots, r$) of $\ct(D_1, D_2)$. Fix an arbitrary $\eps > 0$ and a positive integer $T$ such that
        $\pr(Y_{r} \ge T/k) < \eps/k$. Such a $T$ exists,
        since $\E D_i < \infty$ and $\pr(D_i = \infty) = \pr(D_i^* = \infty) = 0$ for $i = 1, 2$.

        Recall that for each $n$ the degree sequences $d_1=d_1(n), d_2=d_2(n)$ are indexed by $\Vn 1$, $\Vn 2$ respectively,
        and that in the configuration bipartite graph $H_n$ each vertex $v$ in part $i$ is assigned a list $E_v$ of $d_{i,v}$ half-edges.
        We identify the $j$-th half-edge of $E_v$
        with the pair $(v,j)$ and we think of the set $E_v$ as ordered by 
        the second coordinate of each element.
        There are in total $N = \sum_{u \in \Vn 1} d_{1,u} = \sum_{v \in \Vn 2} d_{2,v}$ half-edges assigned to vertices from each of the parts $\Vn 1$, $\Vn 2$.           
        Let 
        $n_{i,j}$ 
        denote
        the number of elements in $d_i$ that equal~$j$.

        Using the assumption
        of the theorem, we can define $\eps_n \to 0$ 
        such 
        that for all $n=1,2,\dots$, $i = 1,2$ and 
        $j=1,2,\dots$
        \[
            \left|\frac {n_{i,j}} {n_i}  - \pr(D_i = j)\right| \le \eps_n; \quad \left|\frac N {n_i} - \E D_i \right| \le \eps_n.
        \]

        For each $n$ we can construct the following set of $2T+k$ independent random elements:
        \begin{align}
            &\hat{v}_1, \hat{v}_2, \dots, \hat{v}_k \quad \mbox{ uniformly random elements from }\Vn 1;
            \\  &\hat{h}_{11}, \hat{h}_{12}, \dots, \hat{h}_{1T} \mbox{ uniformly random half-edges from } \cup_{v \in \Vn 1} E_v;
            \\  &\hat{h}_{21}, \hat{h}_{22}, \dots, \hat{h}_{2T} \mbox{ uniformly random half-edges from } \cup_{v \in \Vn 2} E_v.
        \end{align}
        Write $\hat{d}_i = d_{1,\hat{v}_i}$, also let $\hat{d}_{i,j} = d_{i,v}$ where $v$ is the end vertex of $\hat{h}_{i,j}$.

        By the assumption (iv) of the theorem we have $\hat{d}_1 \xrightarrow{d} D_1$.
        Now $\hat{d}_{11} \sim \hat{d}_1^*$ and since $\hat{d}_1 \xrightarrow{d} D_1$ and $\E D_1 < \infty$
        by Lemma~\ref{lem.uint} we
        get $\hat{d}_{11} \xrightarrow{d} D_1^*$. Similarly, $\hat{d}_{21} \xrightarrow{d} D_2^*$.
        If a sequence $X_n$ of discrete random variables converges in distribution to a random variable $Y$, then $d_{TV}(X_n, Y) \to 0$.
        For a pair of random variables $(X, Y)$ with $d_{TV}(X,Y) = a$, we can always define a coupled copy $(X',Y')$ such 
        that $X' \sim X$, $Y' \sim Y$ and $\pr(X = Y) \ge 1 - a$, see, e.g., \cite{grimmetstirzaker}. 
        By the union bound and 
        independence
        of $\hat{v}_j$, $\hat{h}_{i,j}$, $j=1,\dots,T$, $i = 1,2$,
        it follows that there is a positive sequence $\delta_n \to 0$ such 
        for each $n$ 
        we can couple $\{\hat{d}_j\}$, $\{\hat{d}_{1,j}\}$ and $\{\hat{d}_{2,j}\}$, $j = 1, \dots, T$
        with random variables
        \begin{align}
            &\tilde{d}_1, \tilde{d}_2, \dots, \tilde{d}_k \sim D_1,                \label{eq.seq1}
            \\  &\tilde{d}_{11}, \tilde{d}_{12}, \dots, \tilde{d}_{1T} \sim D_1^*,     \label{eq.seq2}
            \\  &\tilde{d}_{21}, \tilde{d}_{22}, \dots, \tilde{d}_{2T}  \sim D_2^*,        \label{eq.seq3}
        \end{align}
        so that the variables in $(\ref{eq.seq1}) - (\ref{eq.seq3})$ are 
        independent 
        and
        the event $A$ that $\tilde{d}_j = \hat{d}_j$, for all $j = 1, \dots, k$ and 
        $\tilde{d}_{i,j} = \hat{d}_{i,j}$ for all $i=1,2$, $j  = 1,\dots T$ holds with probability at least $1 - \delta_n$.
       

        We may further extend the sequences (\ref{eq.seq2}) and (\ref{eq.seq3}) by defining independent random variables $\tilde{d}_{i,j} \sim D_i^*$ for each $i=1,2$ and each $j = T+1, T+2,\dots$, independent of the previously defined variables.

        Now use $\{\tilde{d}_1, \dots, \tilde{d}_k\}, \{\tilde{d}_{i,j}, j = 1, 2,\dots\}, i = 1, 2$
        to define the first $r$ generations of  $k$ independent copies of $\ct$ as follows.  Generate the children of each node
        in a breadth-first search manner: let $\tilde{d}_1$ be the number of children of the root of the first tree. Next, if the root has $s$
        children, for the $j$-th child add  $\tilde{d}_{2j}-1$ children. Similarly for nodes in an even generation
        use random variables $\tilde{d}_{1j} - 1$, taking the next unused random variable from the sequence for each new subtree, and so on. Once the degrees of all generations up
        to $r-1$ are determined, start a new rooted tree 
        using the next unused random variable $\tilde{d}_l$ as the degree of its root, and so on. Continue until the degrees of all generations up to $r-1$ of the $k$-th tree are determined and denote the resulting forest of $k$  ordered rooted trees by $\tilde{F}$. 

        In parallel we perform the following BFS-exploration of $H_n$ (truncated at distance $r$) that constructs a forest (an ordered sequence) $F$ of $k$ rooted trees. We will make $T$ steps, at each step updating $F$ (initially empty) and $Q$ (an initially empty sequence, representing the \emph{queue} data structure where new elements are always appended to the end, and the \emph{pop} 
    operation returns the first element of $Q$ and deletes it). To keep the notation simpler we will not index $F$ and $Q$ by $n$ or the step number $j = 1, \dots, T$.
        If after some step $j$ we have $|V(F)| + |Q| \ge T$, we say that step $j$ \emph{overflows} and stop the exploration
        (no more matches are revealed until step $T$). We also stop the exploration if we \emph{complete} it or we \emph{fail} at some step $j$, this is defined below.

        At step 1: let $v = \hat{v}_1$ be the root of the first tree in $F$. Step 1 never fails.
        Add $\hat{d}_1$ half-edges of $\hat{v}_1$ to $Q$ keeping their order. 
        
        Let $y$ be a positive integer, $y < T$. Suppose the steps $1, \dots, y$ did not fail or overflow, and the forest $F$ after step $y$ has $k_y$ trees and $N_y$ vertices.
       We have several possibilities:
       \begin{itemize}

           \item The queue $Q$ is empty and $k_y=k$: the construction is complete. $F$ and $Q$ remain frozen for all steps $y+1, \dots, T$.
               
           \item The queue $Q$ is empty and $k_y < k$. Consider the vertex $v = \hat{v}_{k_{y} + 1}$. If $v$ is
               already in $F$, declare that step $y+1$ fails. Otherwise, start a new tree with root $v$ in $F$ and add the half-edges
               $E_v$ to the end of $Q$.
           \item $Q$ is non-empty. Pop the first half-edge $(u,j')$ from $Q$. 
               Suppose $u \in \Vn i$. Suppose also that we used exactly $s$ half-edges from the sequence $\hat{h}_{\bar{i},1}, \hat{h}_{\bar{i},2} \dots, $ 
               in the previous steps. Pair $(u,j')$ with $\hat{h}_{\bar{i}, s+1} = (v, j'')$. 
               If 
               $v$ is already in $F$,
               we say that step $y+1$ fails. Otherwise, we add $v$ as a child of $u$ in $F$. If
               $v$ is at distance $ < r$ from the root of its tree component, we add the remaining $d_{\bar{i}, v}-1$ half-edges $E_v \setminus \{(v,j'')\}$ to the end of $Q$.
       \end{itemize}

    This process reveals some pairs in the random matching that yields $H_n$. 
       After step $T$ we complete the construction of $H_n$ by picking an independent random matching between
       those half-edges in $\cup_{v \in \Vn 1} E_v$ and $\cup_{v \in \Vn 2} E_v$ which have not yet been paired by the exploration
       process.

       Let $W_j$ be the event that step $j$ fails.
       Let $O_j$ be the event that step $j$ overflows. We can assume that after the first fail or an overflow at step $j$, $W_t$ and $O_t$
       do not occur, and $F$ and $Q$ remain unchanged for all subsequent steps $t = j+1, j+2, \dots, T$. 

       For two ordered sequences of rooted graphs $L = \{L_1, \dots, L_t\}$, $L' = \{L_1', \dots, L_t'\}$ we write $L \cong L'$
       to denote the fact there is a rooted isomorphism that maps $\rr(L_j)$ to $\rr(L_j)$ for $j=1,\dots,t$.

       Let $K$ be the event that no step $j = 1,\dots,T$ fails or overflows and the event $A$ holds. This implies that the process constructing $F$ must have
       finished after some step $j < T$ (otherwise after step $T$ we have $|V(F)| \ge T$).
       $K$ 
       implies that $F$ has no cycles and no repeated edges.
       By our construction on $K$ we have $F \cong  \{B_r(H_n, \hat{v}_1), \dots, B_r(H_n, \hat{v}_k)\}$. (Note that the only unrevealed
       edges of $H$ touching $F$ can be edges incident to generation $r$ vertices in $F$. These edges cannot have both endpoints in $F$ because the (multi-)graph $H$ is bipartite.)

       Since the BFS procedure builds the forests $F$ and $\tilde{F}$ in the same order and the degrees of nodes are coupled to agree, the event
       $K$ implies that $\tilde{F} \cong F$.

       Now if $A$ holds and $F$ overflows at some step $j$ then our coupling implies that the final forest has $|V(\tilde{F})| \ge T$.
       So
       \[
           \pr(\tilde{F} \cong F) \ge  1 - \pr(\bar{A}) - \pr(|V(\tilde{F})| \ge T) - \pr(\cup_j W_j) .
       \]
       We have shown above that $\pr(\bar{A}) \to 0$. By our choice of $T$ and the union bound, $\pr(|V(\tilde{F})| \ge T) \le k \eps k^{-1} = \eps$.
       Now let $\cf_j$ be the $\sigma$-algebra generated by 
       the first $j$ steps (i.e., the random variables 
       $\{\hat{v}_l\}$, 
       $\{\hat{h}_{1l}\}$ and 
       $\{\hat{h}_{2l}\}$ that are revealed in steps $l=1, \dots, j$). 
       Let us bound $\pr(W_{j+1} | \cf_j) := \E (\ii_{W_{j+1}} | \cf_j)$.
       On the blocks of $\cf_j$ where $W_l$ or $O_l$ occurs for some $l \le j$ we have that $\ii_{W_{j+1}} = 0$ by definition. We
       may consider those blocks of $\cf_j$ where $W_1, \dots, W_j$ and $O_1, \dots, O_j$ do not occur.
       On those blocks of $\cf_j$ where at step $j+1$ we add a new root vertex, $W_{j+1}$ occurs with probability
       at most
       \[
           \frac {N_j} {n_1} \le \frac {T} {n_1}. 
       \]
       On those blocks of $\cf$ where at step $j+1$ we consider a new half-edge from $Q$, $W_{j+1}$ occurs with probability
       at most $2 T / N$, since the total number of half-edges 
       assigned to vertices in $F$ is exactly $2|E(F)| + |Q| \le 2 T$.

       Thus there
       are absolute constants $N_0, c$  for all $n \ge N_0$
       such that
       \[
           \pr(W_{j+1} | \cf_j) \le \frac {T} {n_1} + \frac {2 T} {\E D_1 n_1 (1-\eps_n)} \le  \frac {c T} {n_1}.
       \]
       By the union bound
       \[
           \pr(W_1 \cup \dots \cup W_T) \le \sum_{j=0}^{T-1} \sup \pr (W_{j+1} | \cf_j) \le \frac {c T^2} {n_1} \to 0.
       \]
       Therefore $\pr(\tilde{F} \cong F)  \ge 1 - \eps - o(1).$
       Since the proof holds for arbitrary $\eps > 0$, we conclude that $\pr(\tilde{F} \cong F) \to 1$ as $n \to \infty$.
       This completes the proof of (\ref{eq.bfsconvconf}).  

   \end{proofof}
  
   \bigskip

   \begin{proofof}{Remark~\ref{rmk.remark1}} 
       The remark is shown as part of the proof of each of the cases, see Lemma~\ref{prop.nocorrelationactive}, Lemma~\ref{prop.inhomogbranching}, (\ref{eq.seq1})--(\ref{eq.seq2}) and the argument in the proof.
   \end{proofof}

   \bigskip

   \begin{proofof}{Remark~\ref{rmk.remark2}} \hskip 1mm  
       Let $n_1 \to \infty$ be a sequence of positive integers.
       Let $\beta = \E D_1' (\E D_2')^{-1}$ and $n_2 = \lfloor \beta n_1 \rfloor$.
       Let $d_i' = d_i'(n)$ consist of $n_i$ independent copies of $D_i'$, so that $d_1'(n)$, $d_2'(n)$ are independent. 
       Write $S'_i = \sum_j d_i'$.  To make the sums of both sequences equal, define $Z_i = (S_i' - S'_{\bar i})_+$ and let $d_1 = d_1(n) = \{d_1', \dots, d_{n_i}', Z_i\}$. 

       Fix any $\eps > 0$. 
       By our choice of $n_2$, $|\E S_2' - \E S_1'| = o(n_1)$.
       By the weak law of large numbers  $\frac{S'_i} {n_i} \xrightarrow{p} \E D_i'$, therefore for all $n$ large enough
       \[
           \pr ( |S'_1 - S'_2| > 3 \eps n_1) \le \pr ( |S'_1 - \E S_1'| > \eps n_1) + \pr ( |S'_2 - \E S_2'| > \eps n_1) + o(1) \to 0.
       \]
       So $Z_i (n_i +1)^{-1} \le |S'_1 - S'_2| (n_i +1)^{-1}  \xrightarrow{p} 0$ and 
       \[
           \frac {\sum_{j=1}^{n_i+1} d_{i,j}} {n_i + 1} = \frac {n_i} {n_i+1} \frac{S_i'} {n_i} + \frac {Z_i} {n_i +1 } \xrightarrow{p} \E D_i'.
       \]
       Similarly for $i = 1,2$ and any $k=0, 1,\dots$:
       \[
           \frac {\sum_{j=1}^{n_i+1} \ii_{d_{i,j} = k}} {n_i + 1} = \frac {n_i + 1} {n_i}  \frac {\sum_{j=1}^{n_i} \ii_{d_{i,j}'=k}} {n_i} + \frac {\ii_{Z_j = k}} {n_i+1} \xrightarrow{p} \pr(D_i' = k).
       \]
   \end{proofof}

\end{document}